\newcommand\textcyr[1]{{\fontencoding{OT2}\fontfamily{wncyr}\selectfont #1}}
\newcommand\cyr
\renewcommand\rmdefault{wncyr} \renewcommand\sfdefault{wncyss} \renewcommand\encodingdefault{OT2} \normalfont
\DeclareTextFontCommand{\textcyr}{\cyr}
\newcommand{\al}{\alpha}
\newcommand{\be}{\beta}
\newcommand{\ga}{\gamma}
\newcommand{\ep}{\varepsilon}
\renewcommand{\phi}{\varphi}
\newcommand{\om}{\omega}
\newcommand{\Om}{\Omega}
\newcommand{\De}{\Delta}
\newcommand{\Si}{\Sigma}
\newcommand{\ZZ}{{\mathbb Z}}
\newcommand{\RR}{{\mathbb R}}
\newcommand{\cE}{\mathscr E}
\newcommand{\Zh}{\hbox{\hspace{-7.0mm} \textcyr{Zh}}} 
\newcommand{\zh}{\hbox{\hspace{-6.7mm} \textcyr{zh}}} 
\newcommand{\lto}{\longrightarrow}
\newcommand{\Int}{\operatorname{Int}}
\newcommand{\Tube}{\operatorname{Tube}}
\newcommand{\sm}{\smallsetminus}
\newcommand{\co}{\colon}
\newcommand{\wh}[1]{{\widehat{#1}}}
\newcommand*\wbar[1]{
  \hbox{ \kern-0.2em%
    \vbox{%
      \hrule height 0.5pt  
      \kern0.25ex
      \hbox{%
        \kern-0.15em
        \ensuremath{#1}%
        \kern-0.05em
      }%
    }%
  \kern0.05em}%
}
\newtheorem{theorem}{Theorem}[section]
\newtheorem{proposition}[theorem]{Proposition}
\newtheorem{corollary}[theorem]{Corollary}
\theoremstyle{definition}     
\newtheorem{definition}[theorem]{Definition}
\theoremstyle{remark}
\newtheorem{remark}[theorem]{Remark}
\newtheorem{example}[theorem]{Example}
\title[Virtual concordance and the generalized Alexander polynomial]
{Virtual concordance and the \\ generalized Alexander polynomial}
\author[H. U. Boden]{Hans U. Boden}
\address{Mathematics \& Statistics, McMaster University, Hamilton, Ontario}
\email{boden@mcmaster.ca}
\urladdr{https://www.math.mcmaster.ca/~boden}
\thanks{The first author was partially funded by the Natural Sciences and Engineering Research Council of Canada, and the second author was partially funded by The Ohio State University.}
\author[M. Chrisman]{Micah Chrisman}
\address{Mathematics, The Ohio State University, Columbus, Ohio}
\email{chrisman.76@osu.edu}
\urladdr{https://micah46.wixsite.com/micahknots}
\subjclass[2010]{Primary: 57M25, Secondary: 57M27}
\keywords{Virtual knot, slice knot, concordance,  generalized Alexander polynomial, ribbon torus link, lower central series, boundary link}
\begin{document}\

\begin{abstract}
We use the Bar-Natan \ $\Zh$-correspondence to identify the generalized Alexander polynomial of a virtual knot with the Alexander polynomial of a two component welded link. We show that the \ $\Zh$-map is functorial under concordance, and also that Satoh's Tube map (from welded links to ribbon knotted tori in $S^4$) is functorial under concordance. In addition, we extend classical results of Chen, Milnor, and Hillman on the lower central series of link groups to links in thickened surfaces. Our main result is that the generalized Alexander polynomial vanishes on any knot in a thickened surface which is virtually concordant to a homologically trivial knot. In particular, this shows that it vanishes on virtually slice knots. We apply it to complete the calculation of the slice genus for virtual knots with four crossings and to determine non-sliceness for a number of 5-crossing and 6-crossing virtual knots.
\end{abstract}

\maketitle

\section{Introduction} \label{S1}

A knot in $S^3$ is called \emph{slice} if it bounds a smoothly embedded disk in $B^4$. Many classical knot invariants take a special form on slice knots. For instance,  
the knot signature vanishes, the determinant is a square integer, and the Alexander polynomial factors as $f(t) f(t^{-1})$ for some integral polynomial. (The last one is the famous Fox-Milnor condition.)

In this paper, we study virtual knots and establish a similar condition on the generalized Alexander polynomial
for slice virtual knots. The generalized Alexander polynomial first appeared in the work of Jaeger, Kauffman, and Saleur. In  \cite{JKS-1994} they introduce a determinant formulation for the Alexander-Conway polynomial derived from the free fermion model in statistical mechanics. Their approach involves constructing link invariants via partition functions and solutions to the Yang-Baxter equation. Using a particular solution, they derive an invariant of knots in thickened surfaces which takes the form of a Laurent polynomial in two variables. In the planar case, the invariant can be further normalized to give the classical Alexander-Conway polynomial. 

In subsequent work, Sawollek \cite{saw} observed that the approach of \cite{JKS-1994} leads to a well-defined invariant of oriented virtual knots and links  now known as the \emph{generalized Alexander polynomial}. Virtual knots were introduced by Kauffman in the mid 1990s, and they can be viewed as knots in thickened surfaces up to isotopy, diffeomorphism, and \emph{stabilization}, described in more detail in \S \ref{S2}. The generalized Alexander polynomial plays a prominent role in virtual knot theory and can be interpreted in a variety of ways. It can be expressed as the zeroth elementary ideal of the extended Alexander group \cite{Silver-Williams-2003}, in terms of quandles \cite{Kauffman-Radford, Manturov2002} and virtual biquandles \cite{Crans-Henrich-Nelson}, and as the Alexander invariant of the virtual knot group \cite{Boden-Dies-Gaudreau-2015}. It has numerous applications: it can detect non-invertibility and non-classicality \cite{saw, Silver-Williams-2003}; it gives a lower bound on the virtual crossing number \cite{Boden-Dies-Gaudreau-2015}; and it dominates the writhe polynomial \cite{Mellor-2016}.

A knot $K \subset \Sigma \times I$ in a thickened surface is said to be \emph{virtually slice} if it bounds a smooth 2-disk in $W \times I$, where $W$ is a compact oriented $3$-manifold with $\partial W=\Sigma$. There is much computational evidence suggesting that the generalized Alexander polynomial is a slice obstruction for virtual knots (see \cite[Conj.~4.1]{Boden-Chrisman-Gaudreau-2017}). Our main theorem confirms this conjecture by showing  that the  generalized Alexander polynomial vanishes on knots in thickened surfaces which are virtually concordant to homologically trivial knots (see Theorem \ref{thm_main}).  In particular, it follows that the generalized Alexander polynomial vanishes on virtually slice knots.

In \S \ref{S2}, we introduce some basic notions and sketch the argument of the main result.
In \S \ref{S3}, we introduce virtual boundary links, and we give virtual analogues of results of Milnor \cite[Theorem 4]{milnor} and Hillman \cite{hill}. 
In \S \ref{S4}, we present the $\Zh$-construction (due to Bar-Natan) and describe the Tube construction (due to Satoh). Both are shown to be functorial under concordance. The main result and its proof are given in \S \ref{subsec:main}, and applications to obstructing sliceness are given in \S \ref{subsec:applications}. The paper concludes with \S \ref{subsec:conclusion}, a brief discussion on further questions about the generalized Alexander polynomial and the $\Zh$-construction.

\section{Preparations}  \label{S2}  
\subsection{Basic Notions} 
We briefly review the basic notions of virtual knot theory, including virtual knot diagrams, welded equivalence, the link group, Gauss diagrams, and knots in thickened surfaces. For a thorough introduction, we recommend \cite{Kauffman-1999}.
Note that all virtual knots and links in this paper will be oriented.

A virtual knot diagram is an immersion of a circle in the plane whose double points are either classical (indicated by over- and undercrossings) or virtual (indicated by a circle). Figure \ref{fig:4-12} shows the virtual knot $4.12$. The decimal number refers to the virtual knot table  \cite{Green}. Virtual link diagrams are defined similarly. Two such diagrams are virtually equivalent if they are related by planar isotopies, (classical) Reidemeister moves, and the detour move (see Figure \ref{detour}). 
These moves are known collectively as the generalized Reidemeister moves.

\begin{figure}[h]
\centering
 \includegraphics[scale=1.35]{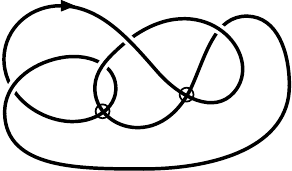}  \qquad \qquad \includegraphics[scale=0.9]{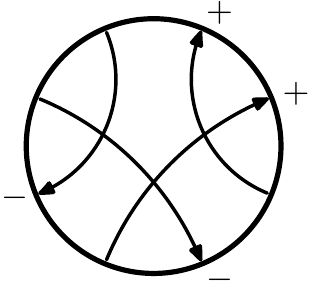} 
 \caption{{The virtual knot $4.12$ and its Gauss diagram. The Gauss code is {{\tt O1-O2-U1-O3+U2-O4+U3+U4+}} and can be read from either diagram. On left, start at the arrowhead; on right, go counterclockwise from 12 o'clock.}}
\label{fig:4-12}
\end{figure} 

Two virtual links are said to be \emph{welded equivalent} if they can be represented by diagrams related by a sequence of generalized Reidemeister moves and the forbidden overpass  (see Figure \ref{detour}). 

\begin{figure}[ht]
\centering
 \includegraphics[scale=0.90]{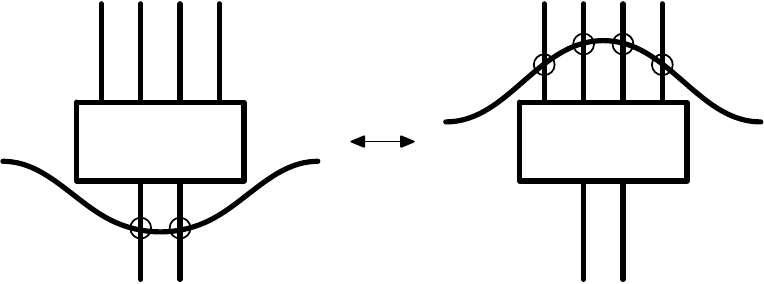} 
  \qquad \includegraphics[scale=0.90]{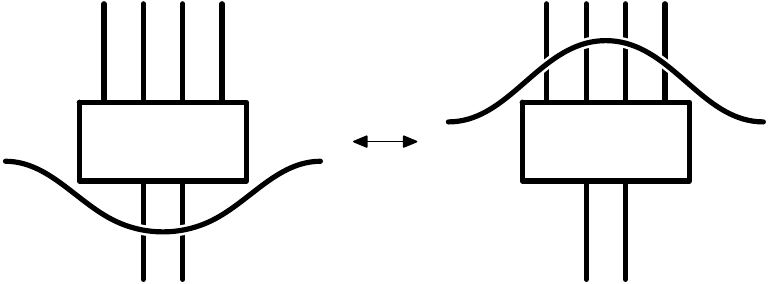} 
 \caption{The detour move (left) and forbidden overpass (right).}
\label{detour}
\end{figure}

Associated to any virtual knot diagram is a Gauss code, which is a word that records the over and undercrossing information and signs of the crossings as one goes around the knot. A Gauss diagram is a decorated trivalent graph that records the same information. Full details can be found in \cite{Kauffman-1999}, and Figure  \ref{fig:4-12} gives an illustrative  example.
Similar considerations apply to virtual link diagrams. There is a notion of equivalence of Gauss diagrams generated by generalized Reidemeister moves, and one can define virtual knots and links as equivalence classes of Gauss diagrams.  

One can alternatively define virtual links as links in thickened surfaces up to stable equivalence.
Let $\Si$ be a compact, connected, oriented surface. A link $L$ in  $\Si \times I$ is a 1-dimensional submanifold with finitely many components, each of which is homeomorphic to $S^1.$ Stabilization is the addition or removal of a 1-handle to $\Si$ disjoint from the diagram of $L$.
Specifically, given disks $D_0, D_1 \subset \Si$ disjoint from one another and from the projection of $L$ under
$\Si\times I\to \Si$, we set $\Si' =\Si \sm (D_0 \cup D_1) \cup S^1 \times I$, where the 1-handle $S^1 \times I$ is attached along  $\partial D_0 \cup \partial D_1$. This operation  is called \emph{stabilization},
and the opposite procedure, which is the removal of a 1-handle disjoint from $L$, is called \emph{destabilization}.
Two links $L_0 \subset \Si_0\times I$ and $L_1 \subset \Si_1 \times I$ are said to be \emph{stably equivalent} if one is obtained from the other by a finite sequence of isotopies, oriented diffeomorphisms of $\Si \times I$, stablizations, and destablizations. By \cite{Carter-Kamada-Saito}, a virtual link is uniquely determined by the associated stable equivalence class of links in thickened surfaces, and vice versa.

A link $L \subset \Si \times I$ is said to have \emph{minimal genus} if it cannot be destabilized. On \cite{kuperberg}, Kuperberg proved that any two minimal genus representatives for the same virtual link are equivalent up to isotopy and diffeomorphism.


\subsection{Generalized Alexander polynomial}
In \cite{saw},  Sawollek introduced the generalized Alexander polynomial, an invariant of oriented virtual links. His definition is an 
adaptation of work of Jaeger, Kauffman and Saleur \cite{JKS-1994}, who 
extended the Alexander-Conway polynomial to links in thickened surfaces.
Using the extended Alexander group, Silver and Williams defined Alexander invariants
for virtual links \cite{Silver-Williams-2001}, and 
in \cite{Silver-Williams-2003}, they showed how to interpret Sawollek's polynomial as the zeroth
order Alexander invariant of their extended Alexander group. The resulting invariant will be denoted 
$\De^0_L(s,t)$.  We recall its definition from \cite{Mellor-2016, Silver-Williams-2003}. (Note that we use variables $s,t$ in place of $v,u$, respectively.) 

Suppose $K$ is a virtual knot, represented as a virtual knot diagram with $n$ classical crossings. Fix an ordering of the crossings, and use it to determine an ordering on the short arcs $a_1,\ldots, a_{2n}$ of the diagram as follows. The short arcs start at one classical crossing and end at the next classical crossing, passing through any virtual crossings along the way. The ordering on the short arcs is chosen so that, at the $i$-th crossing, the incoming short arcs  are $a_{2i-1}$ and   $a_{2i}$, as depicted in Figure \ref{crossing-gap}. 

\begin{figure}[h]
\centering
  \includegraphics[scale=0.90]{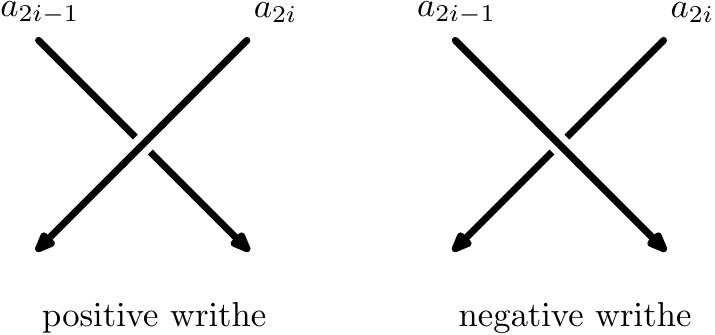}  
 \caption{The labeling of incoming short arcs at the $i$-th crossing.} \label{crossing-gap}
\end{figure} 

Define $M$ to be the block diagonal $2n \times 2n$ matrix with the $2 \times 2$ blocks $M_1,\ldots, M_n$ placed along the diagonal. Here $M_i$ is the matrix corresponding to the $i$-th crossing, and it is equal to $$M_+ = \begin{bmatrix} t^{-1} & 1-(st)^{-1} \\ 0 & s^{-1} \end{bmatrix} \quad \text{ or }
\quad M_- = \begin{bmatrix} s & 0 \\  1-st & t \end{bmatrix}$$
depending on the writhe of the $i$-th crossing; namely 
 we use $M_+$ if the writhe is positive and $M_-$ if it is negative.

The sequence of short arcs that are encountered as one travels along the knot determines a permutation of $\{a_1,\ldots, a_{2n}\}$, given as a cycle.
Let $P$ denote the associated permutation matrix; its $ij$ entry is equal to 0 unless $a_i$ precedes $a_j$ as consecutive short arcs, in which case the $ij$ entry equals 1. The generalized Alexander polynomial is then given by $\De^0_K(s,t) = \det(M-P)$, and it is an invariant of the virtual link up to multiplication by powers of $st$. 

\begin{example} \label{ex:4-12}
For the virtual knot $4.12$ in Figure \ref{fig:4-12}, one can compute that its generalized Alexander polynomial is given by
$\De^0_K(s,t)= (1-t)(1-s)(t - s)(1- st)^2$.
\end{example}

 In \cite{Silver-Williams-2001}, Silver and Williams defined a multi-variable generalized Alexander polynomial. For a virtual link $L$ with $\mu$ components, 
we will denote their invariant by $\De_L^0(s,t_1,\ldots, t_\mu)$.
Note that we use variables $s,t_1,\ldots, t_\mu$ in place of $v,u_1,\ldots, u_\mu$ (cf. \cite[\S 4]{Silver-Williams-2001}).

Using a simple change of variables, Silver and Williams showed that 
$\De_L^0(s,t_1,\ldots, t_\mu)$ vanishes whenever $L$ is classical (see \cite[Corollary 4.3]{Silver-Williams-2001}). In \cite[\S 4]{Silver-Williams-2006}, they extended this vanishing result to include \emph{almost classical} links. Recall that
a virtual link is said to be almost classical if some diagram of it admits an Alexander numbering
\cite[Definition 4.1]{Silver-Williams-2006}. 
Note further that a virtual link is almost classical if and only if it can be represented as a homologically trivial link in a thickened surface (see \cite[Theorem 6.1]{acpaper}).

The vanishing result for $\De_L^0(s,t_1,\ldots, t_\mu)$ can also be deduced Theorem 5.2 \cite{acpaper}. 
In \S \ref{S4-1}, we will give a new proof that the generalized Alexander polynomial vanishes for almost classical links, and in \S \ref{subsec:main}, we strengthen the statement and show that the generalized Alexander polynomial vanishes for any virtual knot that is concordant to an almost classical knot.


\subsection{Virtual link groups} \label{subsec:vlg}
Suppose $D$ is a virtual link diagram  with $\mu$ components $D_1,\ldots, D_\mu$.
Suppose further that the $i$-th component $D_i$ has $n_i$ undercrossings for $i=1,\ldots, \mu,$
thus $D$ has a total of $n=n_1 + \cdots + n_\mu$ crossings. 
We will label the arcs of $D$ according to the following scheme.
 
Fix a base point on $D_i$ that is not a crossing point, and
starting at the base point, label the arcs $a_{i1},\ldots a_{in_i}$ consecutively so that $a_{i1}$ contains the basepoint and so that $a_{ij}$ and $a_{i j+1}$ are the incoming and outgoing undercrossing arcs, respectively, at the $j$-th undercrossing (see Figure \ref{crossing-reg}). At the last crossing of $D_i$, $a_{in_i}$ will be the incoming undercrossing arc and $a_{i1}$ will be the outgoing undercrossing arc. Thus, we take $j$ modulo $n_i$.
Let $a_{k \ell}$ be  the overcrossing arc at this crossing, and let $\ep_{ij}$ be the sign of the crossing. Notice that $k=k(ij)$ and $\ell = \ell(ij)$ are both functions of $i$ and $j$, but we suppress their dependence here.

 \begin{figure}[h]
\centering
 \includegraphics[scale=0.90]{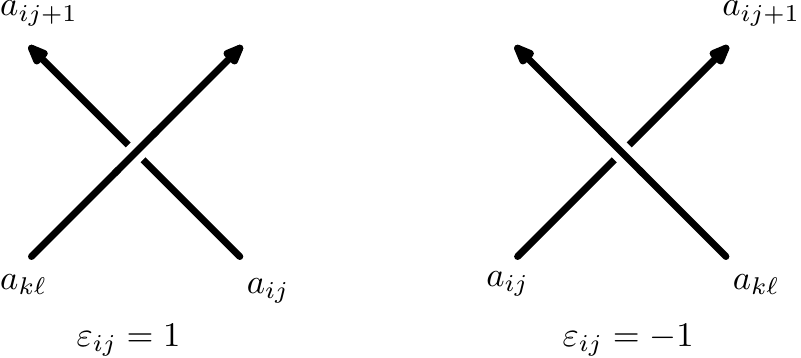} 
 \caption{The arc labels at a positive and negative crossing.} \label{crossing-reg}
\end{figure} 

\begin{definition} \label{defn:link-group}
The \emph{link group} is the  group   with generators $a_{ij}$ for $1\leq i \leq \mu$ and $1 \leq j \leq n_i$, and with relations given by the presentation
\begin{equation} \label{eqn:link-group}
G_D=\left<a_{ij} \mid a_{ij+1}=(a_{k\ell})^{-\ep_{ij}} \, a_{ij} \, (a_{k\ell})^{\ep_{ij}}, i=1,\ldots,\mu, j=1, \ldots, n_i \right>.
\end{equation}
\end{definition}

This group is invariant under the generalized Reidemeister moves and the forbidden overpass, and consequently it is an invariant of the welded type of $D$ (see \cite{Kim-2000}).  

For virtual links represented instead as links $L \subset \Si \times I$ in thickened surfaces, the link group $G_L$ is the fundamental group of the space $Y$ obtained from $\Si \times I\sm L$ by collapsing $\Si \times \{1\}$ to a point (see \cite[Proposition 5.1]{Kamada-Kamada-2000}). 

Given a link group $G=G_L$ for an oriented link $L$ with $\mu$ components,
a choice of meridians $m_1, \ldots, m_\mu$ determines an epimorphism
$\al \co G \to \ZZ^\mu$ with kernel $G'=[G,G]$, the commutator subgroup. Setting
$G''=[G',G']$,  we regard the quotient $G'/G''$ as a module over the group ring $\ZZ[\ZZ^\mu] = \ZZ[t_1^{\pm 1},\ldots, t_\mu^{\pm 1}]$.

This module is of course just the Alexander invariant of $L$, and using Fox differentiation one can easily describe it in terms of a presentation matrix $A$ called the \emph{Alexander matrix}.
For instance, for the group presentation \eqref{eqn:link-group}, the Alexander matrix is the $n \times n$ matrix  with $ij$ entry given by $\left(\frac{\partial r_{ij}}{\partial a_j}\right)^\al,$
where $r_{ij}$ is the relation $(a_{k\ell})^{\ep_{ij}} a_{ij} (a_{k\ell})^{-\ep_{ij}} (a_{i j+1})^{-1}.$
 The $k$-th elementary ideal is denoted $\cE_k$ and defined to be the ideal generated by all the $(n-k) \times (n-k)$ minors of $A$ for $0\leq k \leq n-1$.
For $k=n,$ we set $\cE_n =\ZZ[t_1^{\pm 1},\ldots, t_\mu^{\pm 1}].$

The virtual link group $VG_L$ was introduced in \cite{Boden-Dies-Gaudreau-2015}, and the
virtual Alexander polynomial, denoted ${H}_L (s, t, q)$, is an invariant derived from the zeroth elementary ideal of the virtual Alexander invariant of $L$ (see \cite[Definition 3.1]{Boden-Dies-Gaudreau-2015}). It is closely related to the generalized Alexander invariant. Indeed,  Proposition 3.8 of \cite{Boden-Dies-Gaudreau-2015} implies that $\De^0_L(s,t)= {H}_L (s, t, 1)$ up to powers of $s$ and $t$. The virtual Alexander polynomial also admits a normalization, which is denoted $\wh{H}_L(s,t,q)$ and well-defined up to powers of $st$, and Proposition 5.5 of \cite{Boden-Dies-Gaudreau-2015} implies that the normalized version satisfies $\wh{H}_L (s, t, q) = \wh{H}_L (sq^{-1}, tq, 1)$. Thus the virtual Alexander polynomial determines the generalized Alexander polynomial and vice versa.

Next, we introduce the \emph{reduced virtual link group} $\wbar{G}_L$. In \S \ref{S4}, we will use the Bar-Natan $\Zh$-construction to relate $\wbar{G}_L$ to the link group of the welded link $\Zh(L)$.

\begin{definition} \label{defn-red-virt-gp}
Given a virtual link $L$ represented as a virtual link diagram $D$, one can associate a finitely generated group with one generator for every short arc of $D$, together with one auxiliary generator $v$ and one relation for each classical crossing of $D$ as given in Figure \ref{crossing-red}.

\begin{figure}[h]
\centering
 \includegraphics[scale=0.90]{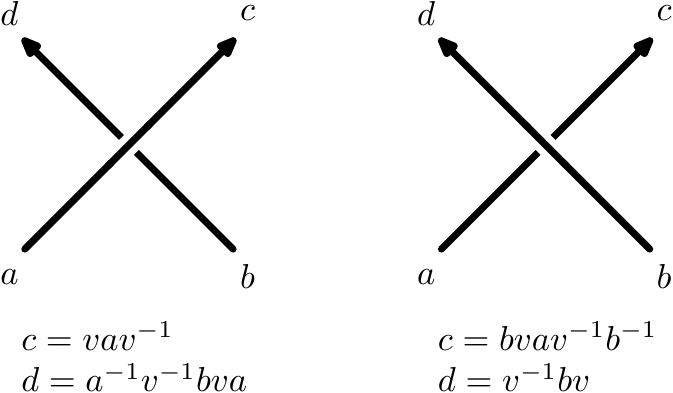} 
 \caption{Relations for the reduced virtual link group $\wbar{G}_L$.} \label{crossing-red}
\end{figure} 

The resulting finitely presented group is an invariant of the virtual link and is denoted
$\wbar{G}_L$ (see \cite[Section 3]{acpaper} for a proof of invariance). It is called the \emph{reduced virtual link group}.
\end{definition} 

We have chosen to use right-handed meridians here, and for that reason the isomorphism between the presentations of $\wbar{G}_L$ and the one given in \cite{acpaper} involves taking the inverses of the generators $a,b,c,d.$ The auxiliary generator $v$ is unchanged.

The group obtained from $\wbar{G}_L$ by taking $v=1$ is isomorphic to the link group $G_L$
\cite[\S 2]{acpaper}.

\begin{example} \label{ex:4-12}
For the virtual knot $4.12$ in Figure \ref{fig:4-12}, one can compute that its knot group $G_K$ is cyclic, and that its reduced virtual knot group $\wbar{G}_K$ is generated by two elements $a,v$ with a single relation: 
$$a=v^{-1} a v^3 a v^{-1} a^{-1} v a^{-1} v^{-4} a^{-1} v a^{-1} v^{-1} a v^3 a v^{-1} a v a^{-1} v^{-3} a^{-1} v a v^{-1} a v^4 a v^{-1} a v a^{-1} v^{-3} a^{-1} v.$$
\end{example}

By \cite[Theorem 3.1]{acpaper}, it follows that $VG_L \cong \wbar{G}_L *_\ZZ \ZZ^2$, thus the virtual link group can be recovered from $\wbar{G}_L$. Consequently, the Alexander invariants of these two groups are equivalent. The reduced Alexander polynomial $\wbar{H}_K (s, t)$ is defined in \cite[\S4]{acpaper} as the zeroth Alexander polynomial associated to the reduced virtual Alexander invariant $\wbar{G}_L'/\wbar{G}_L''$. It is  defined to be the generator of the smallest principal ideal containing $\wbar{\cE}_0$, the zeroth elementary ideal. Then \cite[Theorem 4.1]{acpaper} implies that $\wbar{H}_K (s, t)$ determines the generalized Alexander polynomial and vice versa.


\subsection{Concordance}
In this section, we introduce two equivalent notions of concordance for links in thickened surfaces and for virtual links, one is due to Turaev \cite{Turaev-2008} and the other to Kauffman \cite{Kauffman-2015}. We present several examples of virtual knots that are slice, and one that shows that the generalized Alexander polynomial is not, in general, invariant under virtual concordance. At the end, we define welded link concordance.

In general, two links in a 3-manifold $M$ are said to be \emph{cobordant} if they cobound a properly embedded oriented surface in the cylinder $M \times I.$ They are said to be \emph{concordant} if, in addition, the cobordism surface can be chosen to be a disjoint union of annuli. 

In the case $M$ is a thickened surface $\Si \times I$, there are more relaxed notions that are called virtual cobordism and virtual concordance. They were first introduced by Turaev \cite{Turaev-2008}, and they provide the added flexibility of allowing the surface $\Si$ to change by cobordism.

\begin{definition}\label{defn:virt-link-conc}
Two oriented links  $L_0  \subset \Si_0 \times I$ and $L_1  \subset \Si_1 \times I$ in thickened surfaces are said to be \emph{virtually cobordant} if there exist a compact oriented 3-manifold $W$ with $\partial W = -\Si_0 \cup \Si_1$, together with an oriented surface $Z$ smoothly and properly embedded in $W \times I$ with $\partial Z = -L_0 \cup L_1$. 

The links $L_0$ and $L_1$ are said to be \emph{virtually concordant} if, in addition, the surface $Z \subset W \times I$ is a disjoint annuli, each with one boundary component on $\Si_0 \times I$ and the other on $\Si_1 \times I$. 
\end{definition}

The above definitions can be easily translated into the language of virtual links. For instance, two virtual links are said to be \emph{cobordant} if they can be represented by links in thickened surfaces that are virtually cobordant, and likewise for concordance. 

In \cite{Kauffman-2015}, Kauffman introduced an equivalent description for cobordism and concordance of virtual links. His definition is purely diagrammatic and is given in terms of finding a sequence of births, deaths, and saddle moves.

A birth is the addition of a trivial unknotted unlinked component and a death is the removal of a trivial unknotted unlinked component. A saddle is a move that cuts and rejoins two arcs in a way that respects their orientations, see Figure \ref{saddle}. 
Given a cobordism, its Euler number is defined to be $b+d-s$, where $b$ is the number of births, $d$ is the number of deaths, and $s$ is the number of saddles. The genus of the cobordism surface is determined from its Euler characteristic $\chi = b+d-s$ in the usual way.

\begin{figure}[h]
\centering
  \includegraphics[scale=1.20]{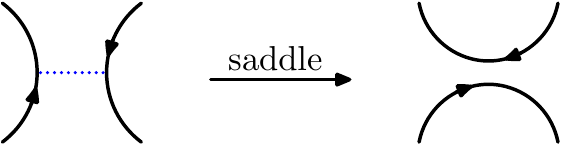}  
 \caption{A saddle move.} \label{saddle}
\end{figure} 

\begin{definition} \label{defn:cobordism}
Two virtual links $L$ and $L'$ are said to be \emph{cobordant} if they have the same number of components and can be represented by virtual link diagrams that are related to one another by a finite sequence of moves that include generalized Reidemeister moves, together with births, deaths, and saddles. 

Further, two virtual links $L=K_1 \cup \cdots \cup K_\mu$ and $L'=K'_1 \cup \cdots \cup K'_\mu$ are said to be \emph{concordant} if there exists a sequence of generalized Reidemeister moves and $b_i$ births, $d_i$ deaths, and $s_i$ saddles which take $K_i$ to $K_i'$ and satisfy $b_i +d_i =s_i$ for each $1 \leq i \leq \mu$. A virtual link that is concordant to the unlink is called \emph{slice}. 
\end{definition} 

\begin{remark} 
(i) Note that cobordant virtual links are required to have the same number of components. Likewise for concordance. \\
(ii)  Cobordism determines an equivalence relation on virtual links. The pairwise virtual linking numbers of the components are invariant under virtual link cobordism. Concordance also determines an equivalence relation on virtual links which is of course a refinement of cobordism. \\
(iii)  Any two virtual knots are cobordant, in particular, any virtual knot is cobordant to the unknot (see \cite{Kauffman-2015}).  The \emph{slice genus} of a virtual knot $K$ is denoted $g_s(K)$ and is defined to be the minimum genus over all cobordisms from $K$ to the unknot.  
\end{remark}

A proof that Definitions \ref{defn:virt-link-conc} and \ref{defn:cobordism} are equivalent for virtual links was given by Carter, Kamada, and Saito \cite{Carter-Kamada-Saito-2004}. In these terms, they prove that two links in thickened surfaces are virtually concordant if and only if they represent concordant virtual links. 

As for classical knots, sliceness and concordance is most easily described via a slice movie where one shows the saddles, births and deaths diagrammatically. Suppose $L_0 \subset \Si_0 \times I$ and $L_1 \subset \Si_1 \times I$ are virtually concordant. Thus they represent concordant virtual links. By Definition \ref{defn:virt-link-conc}, there exists an oriented 3-manifold $W$ with $\partial W = -\Si_0 \cup \Si_1$ and an oriented surface $Z \subset W \times I$  with $\partial Z = -L_0 \cup L_1$. One can view virtual concordance in terms of relative Morse theory as follows. A concordance movie gives rise to a Morse function on $f\colon W \times I \to [0,1]$ with $f^{-1}(i) = \Si_i \times I$ for $i=0,1$ such that the restriction $f|_Z \colon Z \to [0,1]$ is also Morse. Critical points of $f$ correspond to stabilizations and destabilizations of the ambient surface. Local minima and maxima of $f|_Z$ correspond to births and deaths, respectively, and saddles of $f|_Z$ correspond to saddle moves. One can arrange the critical values of $f$ and $f|_Z$ to be disjoint by performing the generalized Reidemeister moves in between all births, deaths, and saddles. We give a few examples to illustrate this technique.

\begin{example} \label{ex:satoh}
Satoh's knot is depicted on the left of Figure \ref{fig:satoh}.  Performing a saddle along the dotted blue arc, one obtains the two-component virtual link on the right. Using generalized Reidemeister moves (actually type II only, both classical and virtual), this link is easily seen to be equivalent to the unlink of two components. Capping off one of the trivial components, one obtains a concordance from Satoh's knot to the unknot. Thus, Satoh's knot is seen to be slice.
\end{example}

\begin{figure}[h]
\centering
  \includegraphics[scale=0.85]{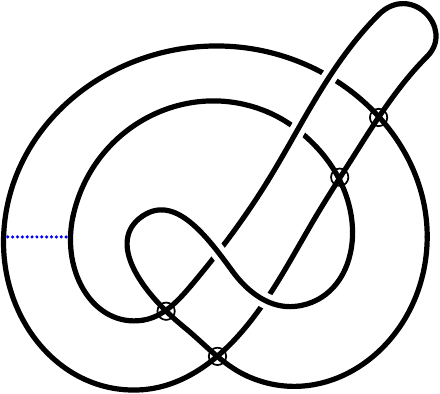} \qquad \qquad
  \includegraphics[scale=0.85]{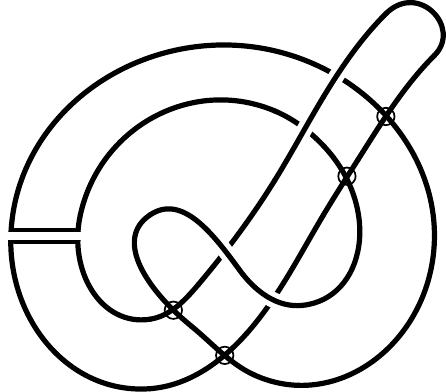}  
 \caption{Satoh's knot is slice.} \label{fig:satoh}
\end{figure} 

The same method can be used to show sliceness for many other virtual knots. For example, consider the virtual knots 4.59 and 4.98 in Figure \ref{fig:4-59}. Performing saddle moves along the dotted blue arcs and applying generalized Reidemeister moves, one can see that the resulting two component virtual links are trivial. Thus both virtual knots 4.59 and 4.98 in Figure \ref{fig:4-59} are slice. 

This technique for slicing virtual knots has been adapted to Gauss diagrams, and it allows one to quickly and easily find slicings for a great many virtual knots \cite{Boden-Chrisman-Gaudreau-2017}.

\begin{figure}[h]
\centering
  \includegraphics[scale=2.00]{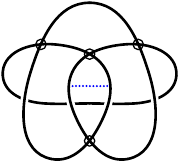} \qquad \qquad \includegraphics[scale=0.90]{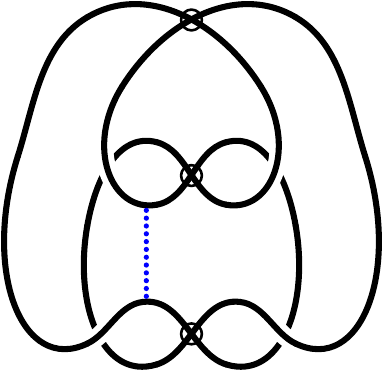}  
 \caption{The virtual knots $4.59$ (left) and $4.98$ (right) are both slice.} \label{fig:4-59}
\end{figure} 

There is a close relationship between concordance of virtual knots and connected sum. Recall that connected sum is not a well-defined operation on virtual knots; it depends on the diagrams used as well as the connection site. In fact, one can produce examples of nontrivial virtual knots by taking the connected sum of two virtual knot diagrams of the unknot. The Kishino knot is the most famous such example, and any virtual knot like that is slice; just perform a saddle move across the isthmus of the connected sum and the resulting two component virtual link is obviously trivial. (Note that this argument works more generally for connected sums of slice virtual knots.)

In a similar way, one can show that the virtual knots $K$ and $K'=K\# J$ are concordant whenever $J$ is slice. (This statement is independent of the indeterminacy inherent to connected sum operation on virtual knots.) 

\begin{figure}[h]
\centering
 \includegraphics[scale=2.00]{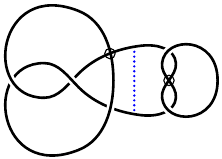} \qquad \qquad
 \includegraphics[scale=2.00]{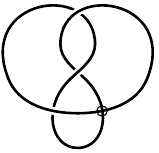} 
 \caption{The virtual knot $5.343$ (left) and the virtual knot 3.2 (right) are concordant.}\label{GD-3-2}
\end{figure} 

For a specific example, consider the virtual knot $5.343$ pictured on the left in Figure \ref{GD-3-2}. Performing a saddle along the dotted blue curve gives a concordance to the virtual knot 3.2, the virtual figure eight knot. (This is immediate from the fact that  $5.343$ is the connected sum of $3.2$ with a virtual unknot.) It follows that the virtual knots $5.343$ and $3.2$ are concordant. Furthermore, by direct computation, one sees that their generalized Alexander polynomials are given by 
\begin{eqnarray*}
\De_{3.2}^0(s,t) &=& (t^2 - 1) (s - 1)(st - 1), \text{ and } \\
\De_{5.343}^0(s,t) &=& - (t - 1)(s - 1)(st - 1)(s^2t + st - s).
\end{eqnarray*}
Since $\De_{3.2}^0 \neq \De_{5.343}^0$ up to units in $\ZZ[s^{\pm 1}, t^{\pm 1}],$ 
this example shows that the generalized Alexander polynomial is not invariant under concordance.

Going back to Definition \ref{defn:virt-link-conc}, when the cobordism is a product, i.e. when $W = (\Si \times I) \times I$, we say that the concordance is \emph{strict}. More precisely, two links $L_0, L_1  \subset \Si \times I$ are said to be \emph{strictly concordant} if there exists a disjoint union of annuli $Z \subset (\Si \times I) \times I$ with $\partial Z = -L_0 \cup L_1$. A link in $\Si \times I$ that is strictly concordant to the trivial link is said to be \emph{strictly slice}.

For links in thickened surfaces, strict concordance can be described in terms of saddles, births and deaths on a link diagram in the surface. However, because stabilization and de-stabilization of the surface is not permitted, strict concordance is considerably stronger than virtual concordance.
For example, if $L_0, L_1  \subset \Si \times I$ are strictly concordant, then $L_0$ and $L_1$ are homologous in $\Si \times I$. In particular, if one of them is null-homologous, then the other one must also be. For example, if $K \subset \Si \times I$ is a strictly slice knot, then $K$ is necessarily null-homotopic and therefore null-homologous.

On the other hand, there are many examples of knots in $\Si \times I$ which are virtually slice but not homologically trivial. The simplest ones are obtained by taking $K \subset \Si$ a simple closed curve with homology class $[K] \in H_1(\Si)$. As long as $[K]$ is non-trivial, then $K$ is not strictly slice, but it nevertheless represents the trivial virtual knot and hence is virtually slice.

More interesting examples can be obtained by taking a virtual knot that is slice but not almost classical. For instance, any of the knots from Figures \ref{fig:satoh} and \ref{fig:4-59}. When represented as a knot in a thickened surface, one obtains a knot in a thickened surface of genus two which is virtually slice but not strictly slice. The same is true of any of the many virtual knots from \cite{Boden-Chrisman-Gaudreau-2017} that are known to be slice but not almost classical. 
 
We end this section with the definition of welded concordance.
 
\begin{definition} \label{defn:welded-concordance}
Two virtual links $L=K_1 \cup \cdots \cup K_\mu$ and $L'=K'_1 \cup \cdots \cup K'_\mu$ are said to be \emph{welded concordant} if there is a finite sequence of generalized Reidemeister moves, forbidden overpass moves, and $b_i$ births, $d_i$ deaths, and $s_i$ saddles, which satisfy $b_i +d_i =s_i$ and take take $K_i$ to $K_i'$ for each $1 \leq i \leq \mu$.
\end{definition}

Welded concordance gives an equivalence relation on virtual links which is coarser than virtual concordance. For example, any virtual knot is welded concordant to the unknot. In short, every welded knot is slice. This is not obvious, but it follows by a clever argument due to Gaudreau \cite[Theorem 5]{Gaudreau-2019}.


\subsection{Sketch of the proof} \label{subsec:sketch}
Our main result is Theorem \ref{thm_main}. It states that the generalized Alexander polynomial vanishes for knots in thickened surfaces which are virtually concordant to homologically trivial knots. Below we give a brief sketch of the proof. 
 
Given a virtual knot $K$, we associate to it a two-component semi-welded link $\Zh(K),$
such that the (reduced) virtual knot group $\wbar{G}_K$ is isomorphic to the classical link group
$G_{\zh(K)}.$ It follows that their Alexander modules are isomorphic. Thus, they have  the same elementary ideals up to an overall degree shift, which is the result
of the fact that the number of meridional generators increases by one under the $\Zh$-map. (Note that the elementary ideals $\bar{\cE}_k$ of the virtual Alexander module
$\wbar{G}_K'/\wbar{G}_K''$ are indexed as in \cite{acpaper}.) 
In particular, the isomorphism of Alexander modules identifies $\bar{\cE}_k$ with $\cE_{k+1}$ for $k \ge 0.$

Additionally, the correspondence $K \mapsto \Zh(K)$ is functorial with respect to concordance.
Namely, if $K$ and $K'$ are concordant as virtual knots, then $\Zh(K)$ and $\Zh(K')$
are concordant as welded links.

Next, we apply Satoh's Tube map to the welded link $L= \Zh(K)$ (see \S \ref{S4} for more details), and consider the resulting
ribbon torus link $\Tube(L) \subset S^4.$  
The Tube map is also shown to be functorial in concordance. 
That is, if $L$ and $L'$ are concordant as welded links, then 
$\Tube(L)$ and $\Tube(L')$ are concordant as ribbon torus links. 

A result of Stallings then shows that, for compact submanifolds $T$ of $S^4$, the nilpotent quotients of the fundamental group of
$S^4 \sm T$ are invariant under concordance.
Applied to $T = \Tube(L),$ since
the classical link group $G_L$ is isomorphic to $\pi_1(S^4 \sm T),$ 
this implies that the nilpotent quotients of $G_L$ are 
invariant under concordance of the welded link $L.$

Any knot in a thickened surface represents a virtual knot $K$. If $K$ is concordant to an almost classical knot, then the welded link $L = \Zh(K)$ is welded concordant to a virtual boundary link (see Definition \ref{defn:VBL} below). 
Therefore, the nilpotent quotients of $G_L \cong \wbar{G}_K$ coincide with those of the free group $F(2)$ on two generators. Using the Chen-Milnor presentation of the nilpotent quotients, we conclude that the longitudes of $L$ must lie in the nilpotent residual $(G_L)_\om.$
Applying a result of Hillman, suitably extended to virtual link groups, it follows that the first elementary ideal $\cE_1$ is trivial, and we conclude that the generalized Alexander polynomial of $K$  must vanish.


\section{Classical results for virtual links} \label{S3} 
\subsection{Virtual boundary links} 
In the following, $\Si$ is a closed oriented surface, and $F(\mu)$ denotes the free group on $\mu$ generators.

\begin{definition} \label{defn:VBL}
A link $L  \subset \Si \times I$  
is
called a \emph{boundary link} if its components bound pairwise disjoint Seifert surfaces in $\Si \times I$.
A virtual link that can be represented by a boundary link  $L \subset \Si \times I$  will be called a \emph{virtual boundary link}.  
\end{definition}

Note that every virtual boundary link is almost classical, but the converse is not true for links with $\mu>1$ components. For links in thickened surfaces, the property of being a boundary link is not preserved under stabilization, but it is preserved under destabilization. This last fact can be proved using an argument similar to the proof of Theorem 6.4 in \cite{acpaper}. Thus, given a virtual link, it is a virtual boundary link if and only if its minimal genus representative satisfies Definition \ref{defn:VBL}. In particular, it follows that a classical link is a virtual boundary link if and only if it is a boundary link.

For a classical link $L \subset S^3$ with $\mu$ components, Smythe showed that $L$ is a boundary link if and only if there is an epimorphism $G_L \to F(\mu)$ mapping the set of meridians to a free basis \cite{Smythe-1966}. This result was extended to higher dimensional links by Guti\'{e}rrez \cite{Gutierrez-1972}. The next result gives the analogue in one direction for virtual links. Note that the other implication is not true for virtual links, in fact it fails already for virtual knots. 
In that case, the knot group $G_K$ always admits an epimorphism to $\ZZ,$ but a virtual knot admits a Seifert surface if and only if it is almost classical. Thus, a virtual knot is a boundary link if and only if it is almost classical.

\begin{proposition}\label{prop_epi} Suppose $L\subset \Si \times I$ is a link in a thickened surface with link group $G_L$. If $L$ is a boundary link with $\mu$ components, then there is an epimorphism $G_L \to F(\mu)$ mapping the set of meridians of $L$ to a free basis.
\end{proposition}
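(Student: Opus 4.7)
The strategy is to imitate Smythe's original proof, using the disjoint Seifert surfaces to build a continuous map from the link exterior into a wedge of $\mu$ circles, whose induced homomorphism on fundamental groups sends meridians to a free basis. The one subtlety compared to the classical $S^3$ setting is that $G_L = \pi_1(Y)$ where $Y = (\Si \times I \sm L)/(\Si \times \{1\})$, so the map must kill $\pi_1(\Si \times \{1\})$; this is handled by arranging the Seifert surfaces to be disjoint from $\Si \times \{1\}$.

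By the boundary link hypothesis, fix pairwise disjoint Seifert surfaces $F_1, \ldots, F_\mu \subset \Si \times I$ with $\partial F_i = L_i$. Because $\partial F_i \subset \Int(\Si \times I)$, any point of $F_i \cap (\Si \times \{1\})$ is necessarily a tangency, since $F_i$ cannot transversely cross out of $\Si \times I$. Such tangencies can be eliminated by a $C^\infty$-small isotopy supported near the tangency locus which pushes $F_i$ downward by a bump function; the same remark applies near $\Si \times \{0\}$. Performing these perturbations simultaneously, and keeping them small enough to preserve pairwise disjointness, we may assume $F_1 \cup \cdots \cup F_\mu \subset \Si \times [\delta, 1-\delta]$ for some $\delta > 0$.

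Next, I would build the map to the wedge. Choose pairwise disjoint open bicollar neighborhoods $N(F_i) \cong F_i \times (-1, 1)$ (adapted near $\partial F_i = L_i$ in the standard way for Seifert surfaces), all contained in $\Si \times [\delta/2, 1-\delta/2]$. Define $\psi \co (\Si \times I) \sm L \to \bigvee_{i=1}^{\mu} S^1_i$ by sending $(x, t) \in N(F_i)$ to $e^{i\pi(t+1)} \in S^1_i$, so that both ends $t = \pm 1$ map to the basepoint $*$, and sending everything outside $\bigcup_i N(F_i)$ to $*$. Continuity is immediate, and since the $N(F_i)$'s are disjoint from $\Si \times \{1\}$, we have $\psi(\Si \times \{1\}) = \{*\}$. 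Thus $\psi$ descends to a continuous map $\tilde\psi \co Y \to \bigvee_i S^1_i$, and we set $\phi = \tilde\psi_* \co G_L \to F(\mu)$.

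Finally, for each $i$ fix a meridian $m_i$ of $L_i$, chosen as a small positively oriented loop crossing $F_i$ transversely in a single point. By construction $m_i$ enters and exits exactly one bicollar $N(F_i)$ once, so $\phi(m_i)$ equals the $i$-th free generator $x_i$ of $F(\mu)$. Since $\{x_1, \ldots, x_\mu\}$ is a free basis of $F(\mu)$, the homomorphism $\phi$ is surjective and the set of meridians is sent to a free basis. The main technical point is ensuring the Seifert surfaces can be made disjoint from $\Si \times \{1\}$, which is what allows $\psi$ to descend to the quotient $Y$; the rest is a direct adaptation of the classical Smythe construction to the thickened-surface setting.
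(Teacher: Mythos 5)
Your proof is correct and follows essentially the same approach as the paper's: use the disjoint Seifert surfaces and their bicollars to build a map to a wedge of $\mu$ circles, then take induced homomorphisms on $\pi_1$. The one addition you make — explicitly pushing the Seifert surfaces into $\Si \times [\delta, 1-\delta]$ so that the map is constant on $\Si \times \{1\}$ and therefore descends to the quotient $Y$ — is a sound piece of bookkeeping that the paper leaves implicit (the paper works directly with $Y$ and tacitly assumes the surfaces and their tubular neighborhoods lie in the interior), but it does not change the substance of the argument.
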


\begin{proof} Let $L=K_1 \cup \cdots \cup K_\mu\subset \Si \times I$, and set $Y=(\Si \times I \sm L)/\Si \times \{1\}.$
Since $L$ is a boundary link, we have a collection $S_1 \cup \cdots \cup S_\mu$ of pairwise disjoint surfaces in $\Si \times I$ such that $\partial S_i =K_i$ for $1 \leq i \leq \mu$. Let $N_1,\ldots, N_\mu$ be disjoint open tubular neighborhoods of $S_1,\ldots, S_\mu$ in $\Si \times I$. Since $S_i$ is homotopic to a 1-dimensional complex, it follows that $N_i \approx S_i \times (-1,1)$ for $1 \leq i \leq \mu$. 

Let $B = \vee^\mu_{i=1} S^1$ be a bouquet of $\mu$ circles, with basepoint $*$ the wedge point. Define $p\co Y \to B$ as follows. For $(x,t) \in S_j \times (-1,1) \approx N_j$, let $p(x,t)=e^{ \pi (t+1)\sqrt{-1}} \in S^1_j$,  the $j$-th circle of $B$. This defines $p$ on the union $\cup_{i=1}^\mu N_i$, and we extend $p$ to all of $Y$ by sending points in $Y \sm \cup_{i=1}^\mu N_i$ to the  basepoint $*\in B$.
Obviously, the bouquet of circles is a $K(F(\mu),1)$, and $p_*\co \pi_1(Y,*) \lto \pi_1(B,*)$ maps the meridians of $L$ to a set that freely generates $\pi_1(B) \cong F(\mu)$. Since $\pi_1(Y) \cong G_L$, we conclude that $p_*$ gives an epimorphism $G_L \to F(\mu)$ sending the meridians of $L$ to a free basis for $F$.  
\end{proof}

Given a finitely generated group $G$, let
$G'=[G,G]$ and $G''=[G',G']$ denote the first and second commutator subgroups of $G$.

The next result is an immediate consequence of  Proposition \ref{prop_epi} and 
Theorem 4.2 in \cite{Hillman-2012}, which asserts that $\cE_{\mu-1}=0$ whenever there is an epimorphism $G_L/G_L''\to F(\mu)/F(\mu)''$.

\begin{corollary} If $L \subset \Si \times I$ is a boundary link with $\mu$ components and link group $G_L$,
then $\cE_{\mu-1}  =0$.
\end{corollary}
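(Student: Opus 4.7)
The plan is to deduce the corollary directly from Proposition \ref{prop_epi} together with Hillman's Theorem 4.2 in \cite{Hillman-2012}. First, Proposition \ref{prop_epi} produces an epimorphism $\phi\colon G_L \to F(\mu)$ sending the meridians of $L$ to a free basis of $F(\mu)$. Because passage to the quotient by the second derived subgroup is functorial, $\phi$ descends to an epimorphism $\bar{\phi}\colon G_L/G_L'' \to F(\mu)/F(\mu)''$ between the maximal metabelian quotients. Hillman's theorem then asserts that the very existence of such an epimorphism forces $\mathcal{E}_{\mu-1} = 0$.

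The one point requiring attention is that Hillman's original result is phrased for classical link groups in $S^3$, whereas here $G_L = \pi_1(Y)$ for the space $Y$ obtained from $\Sigma \times I \setminus L$ by collapsing $\Sigma \times \{1\}$ to a point. However, Hillman's argument is essentially formal: it proceeds from a Wirtinger-type presentation (of exactly the form given in Definition \ref{defn:link-group}) via Fox calculus applied to the abelianization map $\alpha\colon G_L \to \ZZ^\mu$ determined by a choice of meridians, and interprets the elementary ideals as Fitting ideals of the Alexander module $G_L'/G_L''$ over $\ZZ[t_1^{\pm 1}, \ldots, t_\mu^{\pm 1}]$. All of these ingredients are available verbatim for $G_L$ in the thickened-surface setting, so Hillman's proof transfers without modification.

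The main obstacle, such as it is, lies less in the deduction itself and more in verifying that the module-theoretic framework of \cite{Hillman-2012} carries over. Once one checks that the Alexander matrix and elementary ideals in our setting are computed from the same Fox-derivative formulae as in the classical case, the conclusion $\mathcal{E}_{\mu-1} = 0$ is immediate. This is the reason the paper presents the corollary as an immediate consequence of the proposition, and I would structure the written proof accordingly: one sentence invoking Proposition \ref{prop_epi}, one sentence passing to the metabelian quotient, and one sentence applying \cite[Theorem 4.2]{Hillman-2012}.
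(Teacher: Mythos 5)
Your proposal is correct and matches the paper's argument exactly: the corollary is stated in the text as an immediate consequence of Proposition \ref{prop_epi} together with Theorem 4.2 of \cite{Hillman-2012}, precisely via the epimorphism $G_L/G_L'' \to F(\mu)/F(\mu)''$ obtained by descending to metabelian quotients. Your added commentary on transferring Hillman's framework to the thickened-surface setting is a reasonable sanity check but not a genuinely different route.
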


\subsection{Lower central series}
For a group $G$, the \emph{lower central series} is the descending series  $G=G_1 \rhd G_2 \rhd \cdots$ of normal subgroups defined inductively by $G_1=G$ and $G_n=[G,G_{n-1}]$ for $n \geq 2$.  Set $G_\om=\bigcap_{n=1}^{\infty} G_n$. The quotient $G/G_n$ is called the $n$-th nilpotent quotient of $G$ and $G_\om$ is called the \emph{nilpotent residual} of $G$. 

The next theorem is a classical result due to Stallings, see \cite[Theorem 3.4]{stallings_central}.

\begin{theorem}[Stallings] \label{stallings} Let $h \co A \to B$ be a homomorphism of groups inducing an isomorphism $H_1(A) \cong H_1(B)$ and a surjection $H_2(A) \to H_2(B)$. Then for finite $n$, $h$ induces an isomorphism $A/A_n \cong B/B_n$. If $h$ is onto, then $h$ also induces an isomorphism $A/A_\om \cong B/B_\om$.  
\end{theorem}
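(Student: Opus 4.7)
The plan is to prove the theorem by induction on $n$, using the naturality of Stallings' five-term exact sequence in integral group homology
$$H_2(G) \lto H_2(G/N) \lto N/[G,N] \lto H_1(G) \lto H_1(G/N) \lto 0$$
applied with $N = G_n$. Because $[G,G_n] = G_{n+1}$ by definition of the lower central series, the middle term becomes exactly $G_n/G_{n+1}$, which is the successive quotient that drives the induction.

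The base case is $n=2$: since $G/G_2 = H_1(G)$, the hypothesis $H_1(A)\cong H_1(B)$ directly yields $A/A_2 \cong B/B_2$. For the inductive step, assume $h$ induces an isomorphism $\bar{h}\co A/A_n \to B/B_n$. Naturality of the lower central series gives $h(A_n)\subseteq B_n$, so $h$ induces a commutative ladder between the two five-term sequences above (for $G=A$ and $G=B$, with $N=G_n$). The inductive hypothesis furnishes isomorphisms $H_i(A/A_n)\cong H_i(B/B_n)$ for $i=1,2$; together with the hypothesized iso on $H_1(A)\to H_1(B)$ and surjection $H_2(A)\to H_2(B)$, the asymmetric five-lemma forces the middle vertical arrow to be an isomorphism, i.e., $A_n/A_{n+1}\cong B_n/B_{n+1}$. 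A second application of the five-lemma, this time to the short exact sequences $1 \to G_n/G_{n+1} \to G/G_{n+1} \to G/G_n \to 1$ for $G=A,B$ (both outer arrows now iso), gives $A/A_{n+1}\cong B/B_{n+1}$, closing the induction.

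For the nilpotent residual statement, assume in addition that $h$ is surjective. Injectivity of the induced map $A/A_\om \to B/B_\om$ comes for free from the finite case: if $h(a)\in B_\om = \bigcap_n B_n$, then $h(a)\in B_n$ for every $n$, so by injectivity of $A/A_n\to B/B_n$ already established, $a\in A_n$ for every $n$, hence $a\in A_\om$. Surjectivity uses the assumed surjectivity of $h$ directly: any $b\in B$ lifts to some $a\in A$ with $h(a)=b$, so $bB_\om$ lies in the image. It is precisely this second step that fails without surjectivity of $h$, which explains why the $\om$-conclusion requires this extra hypothesis.

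The main technical subtlety is the asymmetric five-lemma step in the induction: the $H_2$ input is only a surjection, not an isomorphism, so the symmetric five-lemma does not apply. Conceptually, one should view $G_n/G_{n+1}$ as fitting into the short exact sequence
$$0 \to \mathrm{coker}\bigl(H_2(G)\to H_2(G/G_n)\bigr) \to G_n/G_{n+1} \to \ker\bigl(H_1(G)\to H_1(G/G_n)\bigr) \to 0$$
extracted from the five-term sequence; the surjection and isomorphism hypotheses are exactly what is needed to match both outer terms between $A$ and $B$, after which the five-lemma is a routine chase. The crux of the argument is thus Hopf's identification of $G_n/G_{n+1}$ with the middle term of the five-term sequence, which transforms a statement about lower central series into a statement about homology functors and lets the naturality do the real work.
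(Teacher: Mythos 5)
Your proof is correct, and it is essentially Stallings' original argument from \cite[Theorem 3.4]{stallings_central}, which the paper cites without reproducing. The induction on $n$ via naturality of the five-term exact sequence with $N = G_n$, the identification $N/[G,N] = G_n/G_{n+1}$, the asymmetric five-lemma step (surjection on $H_2$, isomorphisms elsewhere), and the passage to the nilpotent residual using the extra surjectivity hypothesis all match the standard treatment.
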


In the following, let $F(\mu)$ be the free group on $\mu$ generators.

\begin{theorem} \label{thm-boundary-link-long}
Suppose $L \subset \Si \times I$ is a boundary link with $\mu$ components. Then $G_L/(G_L)_n \cong F(\mu)/F(\mu)_n$ for all finite $n$, and $G_L/(G_L)_\om \cong F(\mu)/F(\mu)_\om$. Further, every longitude  of $L$ satisfies $\ell_i \in (G_L)_\om$. 
\end{theorem}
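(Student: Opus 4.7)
The plan is to apply Stallings' Theorem \ref{stallings} to the epimorphism $p_* \colon G_L \to F(\mu)$ supplied by Proposition \ref{prop_epi}. Two hypotheses must be verified. First, $H_2(F(\mu)) = 0$ since $F(\mu)$ is free, so the surjectivity condition on $H_2$ is automatic. Second, I need $p_*$ to induce an isomorphism on $H_1$. From a Wirtinger-style presentation of $G_L$ coming from a diagram of $L$ on $\Si$, every generator is conjugate to some meridian $m_i$, so $H_1(G_L)$ is generated by the classes $[m_1], \ldots, [m_\mu]$. Since $p_*$ carries the $m_i$ to a free basis of $F(\mu)$, the $[m_i]$ are $\ZZ$-linearly independent, and $p_*$ is an isomorphism on $H_1$.

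Stallings then yields $G_L/(G_L)_n \cong F(\mu)/F(\mu)_n$ for every finite $n$, and since $p_*$ is onto, an isomorphism $G_L/(G_L)_\om \cong F(\mu)/F(\mu)_\om$. By Magnus' classical theorem, $F(\mu)$ is residually nilpotent, so $F(\mu)_\om = \{1\}$ and in fact $G_L/(G_L)_\om \cong F(\mu)$.

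For the longitude statement, I would unpack the explicit map $p \colon Y \to B = \bigvee_{i=1}^\mu S^1_i$ constructed in the proof of Proposition \ref{prop_epi}. The preferred Seifert-framed longitude $\ell_i$ can be represented as $\partial S_i$ pushed transversally off $S_i$, that is, as the loop $\partial S_i \times \{\ep\} \subset S_i \times (-1,1) \approx N_i$ for some small $\ep>0$. Since $p|_{N_i}$ factors through projection to the second coordinate, it sends this loop to a single point on $S^1_i$. Hence $p_*(\ell_i) = 1$ in $F(\mu)$, and the isomorphism $G_L/(G_L)_\om \cong F(\mu)$ forces $\ell_i \in (G_L)_\om$.

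The main technical obstacle I expect is the precise verification that $H_1(G_L) \cong \ZZ^\mu$ on the meridian classes, where one must account for the fact that $Y = (\Si \times I \sm L)/\Si \times \{1\}$ is defined by collapsing $\Si \times \{1\}$ to a point. This collapse is exactly what is needed to kill contributions to $H_1$ coming from nontrivial cycles on $\Si$, leaving only the meridional classes. A clean way to handle this is to exhibit $Y$ as homotopy equivalent to the presentation $2$-complex of a Wirtinger-style presentation of $G_L$ read off from a diagram of $L$ on $\Si$, from which the abelianization computation is straightforward.
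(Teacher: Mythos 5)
Your proof follows essentially the same route as the paper: apply Stallings' theorem to the epimorphism $p_*\colon G_L \to F(\mu)$ supplied by Proposition \ref{prop_epi} (checking $H_1$ isomorphism and $H_2$ surjectivity, the latter trivially since $H_2(F(\mu))=0$), and then show directly that $p$ kills each longitude. The one small difference is in the last step: the paper pushes the longitude curve far enough in the positive normal direction that it lies outside $N_i$, where $p$ sends it \emph{directly} to the basepoint $*$, whereas you keep it at height $\ep$ inside $N_i \approx S_i \times (-1,1)$, where $p$ maps it to a constant non-basepoint. Your version still closes, because the loop becomes based only after conjugating by a connecting arc $\gamma$ and $p_*(\gamma \cdot c \cdot \gamma^{-1})=p_*(\gamma)\,1\,p_*(\gamma)^{-1}=1$; but the paper's choice avoids any basepoint bookkeeping.
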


\begin{proof} The epimorphism $G_L \to F(\mu)$ of Proposition \ref{prop_epi} clearly induces
an isomorphism $H_1(G_L) \to H_1(F(\mu))$ and a surjection $H_2(G_L) \to H_2(F(\mu))$.
(Indeed, $H_2(F(\mu))=0$.)
Thus, Theorem \ref{stallings}
applies to prove the first part. 

To prove the second part, recall that finitely generated free groups are residually nilpotent, hence $F(\mu)_\om=1$. Thus, showing that the longitudes of $L$ lie in $(G_L)_\om$ 
is equivalent to showing they lie in the kernel of the epimorphism $G_L \to F(\mu)$. 

Recall the construction of the map $p\co (Y,*) \to (B,*)$ of pointed spaces from the proof of Proposition \ref{prop_epi}. (Here $Y = (\Si \times I \sm L) / \Si \times \{1\}$ and
$B =\vee_{i=1}^\mu S^1$.)

Let $K_i$ be the $i$-th component of $L$, and $\ell_i$ be the longitude of $K_i$.
We choose a representative for $\ell_i$ which is a simple closed curve on the Seifert surface $S_i$ for $K_i$. By pushing this curve in the positive normal direction to $S_i$, we can arrange that it lies outside the open tubular neighborhood $N_i$ of $S_i$. By construction, $p$ sends this curve to the basepoint $*$, and it follows that $\ell_i \in \ker(p_*)$. This completes the proof of the second part.
\end{proof}


\subsection{The Chen-Milnor Theorem}
Milnor described a presentation for the nilpotent quotients of the link group of a classical link in \cite{milnor}, and this was extended to virtual links in \cite{dye_kauffman}. We present an alternative proof for virtual links based on the argument of Turaev \cite{Turaev-1976}.

Let $L=K_1 \cup \cdots \cup K_\mu$ be an oriented virtual link diagram  with $\mu$ components. We regard the  link group $G_L$ with its Wirtinger presentation as in Definition \ref{defn:link-group}. For $i=1,\ldots, \mu$, we choose one arc on the $i$-th component $K_i$ of $L$ and let $m_i$ denote the resulting meridian, which is given by the associated generator of \eqref{eqn:link-group}. The corresponding longitude $\ell_i$ is the word obtained by following $K_i$ in the direction of its orientation, starting at the chosen arc, and at each undercrossing recording the overcrossing arc with sign $\ep = \pm 1$ according to the writhe of the crossing. The resulting word is multiplied by the appropriate power of $m_i$ so that the longitude $\ell_i$ has $m_i$-th exponential sum equal to zero.

\begin{theorem} \label{theorem_milnor} 
Let $L \subset \Si \times I$ be a link with $\mu$ components and link group $G_L$,
and let $F(\mu)$ be the free group generated by $x_1,\ldots,x_\mu$.
Then $G_L/(G_L)_n$ has a presentation:
\begin{equation}\label{eqn-milnor}
\left< x_1,\ldots,x_\mu \mid [x_1,\ell_{1,n}],\ldots,[x_\mu,\ell_{\mu,n}], F(\mu)_n\right>,
\end{equation}
where $x_i$ and $\ell_{i,n}$ represent the images in $G_L/(G_L)_n$ of the $i$-th meridian and longitude, respectively.
\end{theorem}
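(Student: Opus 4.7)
The plan is to adapt Turaev's argument \cite{Turaev-1976} to the virtual setting, exploiting the fact that the Wirtinger presentation (Definition \ref{defn:link-group}) takes the same form for virtual and classical links.

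First, define $\phi \co F(\mu) \to G_L$ by $x_i \mapsto m_i$ and compose with the quotient to obtain $\psi_n \co F(\mu) \to G_L/(G_L)_n$. The meridian $m_i$ and longitude $\ell_i$ generate the (abelian) fundamental group of the boundary torus of a tubular neighbourhood of $K_i$, so they commute in $G_L$; hence $[x_i,\ell_{i,n}]\in\ker(\psi_n)$, while $F(\mu)_n \subseteq \ker(\psi_n)$ trivially. Setting
\[
P := F(\mu)\big/ \big\langle\!\big\langle [x_1,\ell_{1,n}],\ldots,[x_\mu,\ell_{\mu,n}],F(\mu)_n \big\rangle\!\big\rangle,
\]
the homomorphism $\psi_n$ descends to $\bar\psi_n \co P \to G_L/(G_L)_n$, and the task is to prove that $\bar\psi_n$ is an isomorphism.

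For surjectivity I would induct on $n$, showing that every Wirtinger generator $a_{ij}$ lies in the image of $\psi_n$. Each Wirtinger relation $a_{i,j+1} = (a_{k\ell})^{-\ep}\,a_{ij}\,(a_{k\ell})^{\ep}$ rewrites as $a_{i,j+1} = a_{ij}\cdot[a_{ij}^{-1},(a_{k\ell})^{\ep}]$, pushing the correction term one layer deeper into the lower central series. The inductive hypothesis allows $a_{k\ell}$ to be replaced modulo $(G_L)_{n-1}$ by a word in the meridians, and iterating along $K_i$ starting from the base arc $a_{i1}=m_i$ then expresses each $a_{ij}$ modulo $(G_L)_n$ as a word in $m_1,\ldots,m_\mu$.

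For injectivity I would apply Stallings' Theorem \ref{stallings} to $\bar\psi_n$. The $H_1$-hypothesis is immediate since both $P$ and $G_L/(G_L)_n$ abelianize to $\ZZ^\mu$ with $x_i \leftrightarrow m_i$. The surjectivity on $H_2$ would be verified via Hopf's formula: comparing the given presentation of $P$ with the Wirtinger presentation of $G_L/(G_L)_n$, each Wirtinger relator, viewed as an element of the commutator subgroup, must be shown to be congruent modulo $[F,R]$ to a product of the central relators $[x_i,\ell_{i,n}]$ and elements of $F(\mu)_n$, using the same bookkeeping as in the surjectivity step but tracked at the level of relations. Stallings' theorem then yields an isomorphism between the $n$-th lower central quotients of $P$ and $G_L/(G_L)_n$; since $F(\mu)_n$ is imposed as a relation in $P$, both groups are already nilpotent of class less than $n$, so these quotients are $P$ and $G_L/(G_L)_n$ themselves, and we conclude $P \cong G_L/(G_L)_n$, which is the presentation \eqref{eqn-milnor}.

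The main obstacle is the $H_2$-surjectivity verification. Turaev's approach handles this cleanly by building the lower central quotients through the sequence of explicit central extensions
\[
1 \to (G_L)_n/(G_L)_{n+1} \to G_L/(G_L)_{n+1} \to G_L/(G_L)_n \to 1
\]
and analysing $(G_L)_n/(G_L)_{n+1}$ as a $\ZZ[\ZZ^\mu]$-module at each stage, which sidesteps a direct Hopf-formula computation. The core technical content is then to transport this inductive construction to the virtual setting, where the only change is that the Wirtinger relations at classical crossings are supplemented by virtual crossings that contribute no new relations, so the same inductive analysis of the central quotients goes through verbatim.
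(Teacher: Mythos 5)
Your proposal takes a genuinely different route from the paper, and it has a real gap at exactly the step you flag. You work entirely algebraically from the Wirtinger presentation: form $P = F(\mu)/\langle\langle[x_i,\ell_{i,n}], F(\mu)_n\rangle\rangle$, prove surjectivity of $\bar\psi_n\co P \to G_L/(G_L)_n$ by pushing Wirtinger generators down the lower central series, and prove injectivity by applying Stallings to $\bar\psi_n$ with the $H_2$-surjectivity checked via Hopf's formula. The paper instead goes topological: with $Y = (\Sigma \times I \sm L)/\Sigma\times\{1\}$, it carves out a ball $V$ carrying arcs from the basepoint to each boundary torus, sets $W = \overline{Y\sm V}$ and $G = \pi_1(W)$, and computes $H_2(W;\ZZ) = 0$ directly from a long exact sequence of a pair. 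Stallings is then applied to $F(\mu) \to G$, not to your $\bar\psi_n$, and the $H_2$-hypothesis is automatic because $H_2(G)=0$. Van Kampen identifies $G_L$ with $G/\langle\langle \partial D_i\rangle\rangle$, where $\partial D_i$ is the $i$-th meridian--longitude commutator, and the presentation \eqref{eqn-milnor} then drops out of the isomorphism $F(\mu)/F(\mu)_n \cong G/G_n$.

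The gap is the $H_2$-surjectivity of $\bar\psi_n$. You acknowledge it is ``the main obstacle'' but do not resolve it, and the proposed substitute --- building up the lower central quotients by central extensions --- is also not carried out. Your closing assertion that ``the same inductive analysis of the central quotients goes through verbatim'' is where the argument quietly collapses. For a classical link diagram on $S^2$, the Wirtinger relators satisfy a global redundancy (the product of all relators, suitably conjugated, is trivial), and that redundancy is the presentation-theoretic shadow of $H_2$ of the exterior being spanned by the boundary tori; it is what makes the classical algebraic treatments of the Chen--Milnor theorem close up. For a virtual link this redundancy fails precisely because virtual crossings interrupt the diagram's regions while contributing no relations --- the very feature you cite as reassurance is the source of the difficulty. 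The ambient space $Y$ is not a $3$-manifold, and controlling the relevant $H_2$ is a genuine task, which the paper handles by the cut-out construction and the explicit computation $H_2(W;\ZZ)=0$. Without some version of that computation (or an equivalent replacement for the lost Wirtinger redundancy), the $H_2$ step of your injectivity argument cannot be completed, so your approach as written does not yield a proof.
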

\begin{proof}
Write $L = K_1 \cup \cdots \cup K_\mu$, and let $N(L)= N(K_1)\cup \cdots \cup N(K_\mu)$ be the union of disjoint tubular neighborhoods in $\Si \times I$. Each $N(K_i)$ is a solid torus and has boundary $\partial N(K_i) \approx T^2$.

Let $X= \Si \times I \sm \Int(N(L))$ be the exterior of $L$ in $\Si \times I$, and set $Y=X/\Si \times \{1\}$. Notice that $Y$ is homotopic to $X$ with a cone placed on top, and that
 $G_L \cong \pi_1(Y, *)$. In the following, we choose the basepoint $*$ to lie on $\Si \times \{0\}$.

We choose arcs $\al_i$ in $X$ from $*$ to $\partial N(K_i)$ whose interiors are disjoint and lie in $\Int(X)$. Let $V$ be a closed regular neighborhood of $\cup_{i=1}^\mu \al_i$ in $X$, and notice that $V \approx D^3$.
Further, let $D_i = V \cap \partial N(K_i)$, so that $D_i \approx D^2$. Thus $\overline{\partial N(K_i) \sm D_i}$ is a once-punctured torus.

Let $W = \overline{Y \sm V}$ and $G=\pi_1(W,*).$ Clearly $H_1(W; \ZZ) \cong \ZZ^\mu$, with one generator for each meridian of $L$. We claim that $H_2(W;\ZZ)=0$. To see this, notice that $X \sm V$ is a 3-manifold with two boundary components, one is $\Si \times \{1\}$ and the other is the connected sum of $\Si \times \{0\}$ with the tori $\partial N(K_1), \ldots, \partial N(K_\mu)$ connected along the tubes $\al_i$. Writing $\partial (X\sm V) = \partial_1 \cup \partial_2$ and using the long exact sequence for the pair 
$(X \sm V, \partial_1 \cup \partial_2)$, it follows that $H_2(X \sm V)$ is generated by $\partial_1$ and $\partial_2$ subject to the single relation $\partial_1 + \partial_2 =0.$ Thus $H_2(X \sm V)=\ZZ$, and since $W = (X \sm V)/\Si \times \{1\}$, it follows that $H_2(W;\ZZ)=0$ as claimed.

One can construct a $K(G,1)$ by attaching cells of dimension $d \geq 3$ to $W$, and the induced map $H_2(W) \lto H_2(G)$ is therefore surjective. Thus $H_2(G)=0$, and  
applying  Theorem \ref{stallings}, we see that the inclusion of the meridians induces an isomorphism $F(\mu)/F(\mu)_n \cong G/G_n$ for all $n \geq 1.$ 
Since $Y = W \cup (\cup_{i=1}^\mu D_i) \cup V$, it follows that the link group $G_L$ is isomorphic to the quotient $G/\langle \langle \partial D_i \rangle \rangle,$ where $\partial D_i$ represents the commutator of curves in $W$ whose images in $G_L$ are the meridian and longitude of the $i$-th component. This completes the proof.
\end{proof}

\begin{proposition} \label{prop-nil-quot}
Let $L \subset \Si \times I$ be a link with $\mu$ components and link group $G_L$.
Then $G_L/(G_L)_n \cong F(\mu)/F(\mu)_n$ if and only if all the longitudes of $L$ lie in the nilpotent residual $(G_L)_\om$.
\end{proposition}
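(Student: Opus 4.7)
I would prove the two implications separately, using Theorem \ref{theorem_milnor} throughout. The easy $(\Leftarrow)$ direction is direct: if $\ell_i \in (G_L)_\om \subseteq (G_L)_n$ for every $n$, then the image of the $i$-th longitude in $G_L/(G_L)_n$ is trivial, so Milnor's presentation can be written with each representing word $\ell_{i,n}$ taken to be the empty word. The commutator relations $[x_i,\ell_{i,n}]$ disappear automatically and the presentation collapses to $\langle x_1,\ldots,x_\mu \mid F(\mu)_n\rangle = F(\mu)/F(\mu)_n$.

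For the $(\Rightarrow)$ direction, assume $G_L/(G_L)_n \cong F(\mu)/F(\mu)_n$ as abstract groups for every $n$. Milnor's presentation exhibits the meridional map $\bar\pi_n \colon F(\mu)/F(\mu)_n \to G_L/(G_L)_n$ as a surjection. Composing with an abstract isomorphism yields a surjective endomorphism of the finitely generated nilpotent group $F(\mu)/F(\mu)_n$, which is Hopfian, so $\bar\pi_n$ is itself an isomorphism. Taking the inverse limit gives an isomorphism of pro-nilpotent completions $\widehat{F(\mu)} \xrightarrow{\sim} \widehat{G_L}$ identifying the free generator $x_i$ with the meridian $m_i$. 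The longitude $\ell_i$ maps to an element $\widehat\ell_i \in \widehat{F(\mu)}$, and two properties pin it down: it commutes with $x_i$, because $[m_i, \ell_i] = 1$ holds on the boundary torus of a tubular neighborhood of $K_i$ inside $G_L$, and it has $x_i$-exponent sum zero by the normalization in the definition of longitude.

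To conclude $\widehat\ell_i = 1$, I would invoke the classical fact, arising from the Magnus--Witt identification of the associated graded of the lower central series of $F(\mu)$ with the free Lie algebra on $x_1, \ldots, x_\mu$, that the centralizer of a free generator in this Lie algebra is the rank-one subgroup $\ZZ x_i$. Induction on the weight of the lowest nonzero graded piece of $\widehat\ell_i$ then forces that piece to lie in $\ZZ x_i$: for weights $\ge 2$ it vanishes by degree, and in weight $1$ the exponent-sum condition kills the remaining scalar. Hence $\widehat\ell_i$ is trivial, and $\ell_i \in \ker(G_L \to \widehat{G_L}) = (G_L)_\om$. The main technical hurdle is the centralizer/induction step, which rests on Magnus--Witt but must be set up carefully so the inductive step on weight closes without confusion between the filtration on $F(\mu)$ and its associated graded.
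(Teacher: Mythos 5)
Your proof is correct and follows the same overall strategy as the paper, which likewise rests on the Chen--Milnor presentation of Theorem~\ref{theorem_milnor}. For the ``$\Leftarrow$'' direction your argument is essentially the paper's (the paper notes $\ell_i\in (G_L)_{n-1}$ forces $\ell_{i,n}\in F(\mu)_{n-1}$ and hence $[x_i,\ell_{i,n}]\in F(\mu)_n$; taking $\ell_{i,n}$ to be the trivial word as you do is an equivalent shortcut). For the ``$\Rightarrow$'' direction the paper invokes Corollary~5.12(iii) of Magnus--Karrass--Solitar, which says precisely that a word in $F(\mu)$ with zero $x_i$-exponent sum whose commutator with $x_i$ lies in $F(\mu)_n$ must itself lie in $F(\mu)_{n-1}$; what you do is re-derive this fact from the Magnus--Witt identification of $\bigoplus_k F(\mu)_k/F(\mu)_{k+1}$ with the free Lie ring and the computation of the centralizer of a generator there, then package the argument for all $n$ at once via the pro-nilpotent completion. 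That is the same underlying computation, just carried out by hand rather than cited. Two things you make more explicit than the paper, and usefully so: the Hopficity step showing the meridional surjection $F(\mu)/F(\mu)_n\to G_L/(G_L)_n$ is already an isomorphism (the paper treats this as automatic), and the source of the relation $[m_i,\ell_i]=1$, which you correctly trace to the peripheral torus $\partial N(K_i)\subset Y$ (in the paper this is encoded in the $\partial D_i$ relation inside the proof of Theorem~\ref{theorem_milnor}). Your remark about the technical care needed in the centralizer/induction step is warranted; that is exactly what the MKS citation is sidestepping.
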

\begin{proof} 
If the longitude $\ell_i$ lies in $(G_L)_{n-1}$, then the corresponding element $\ell_{i,n}$ in the presentation
of \eqref{eqn-milnor} necessarily lies in $F(\mu)_{n-1}.$ Consequently, if all the longitudes of $L$ lie in $(G_L)_{n-1}$, then $[x_i,\ell_{i,n}] \in F(\mu)_n$ for $i=1,\ldots, \mu$. In particular, if $\ell_i \in (G_L)_\om \lhd (G_L)_{n-1}$ for $i=1,\ldots, \mu$, then the relations $[x_i,\ell_{i,n}]$  are consequences of the relations $F(\mu)_n=1$.
In that case, the presentation \eqref{eqn-milnor} of Theorem \ref{theorem_milnor} simplifies to show that
$G_L/(G_L)_n \cong F(\mu)/F(\mu)_n,$ which proves one direction.

To prove the other direction, suppose conversely that $G_L/(G_L)_n \cong F(\mu)/F(\mu)_n$ for all $n$. Then the relations $[x_i,\ell_{i,n}]=1$ of the presentation \eqref{eqn-milnor} must be consequences of the relation $F(\mu)_n=1$. Recall that the $i$-th longitude $\ell_i$ is defined to have zero exponential sum with respect to the $i$-th meridian, and so Corollary 5.12 (iii) of \cite{Magnus-Karrass-Solitar} applies to show that since $[x_i,\ell_{i,n}] \in F(\mu)_n$, we must have $\ell_{i,n} \in F(\mu)_{n-1}$. Consequently, it follows that $\ell_i \in (G_L)_{n-1}$, and since this holds for all $n \geq 3,$ we conclude that $\ell_i \in   
(G_L)_\om$.
\end{proof}


\subsection{The Chen groups}
Given a finitely generated group $G$, 
define the \emph{Chen groups} of $G$ to be the quotient groups
$$Q(G;q) = \frac{G_q G''}{G_{q+1}G''}.$$
Let $G(\infty)=\bigcap_{n=1}^{\infty} G_nG''$.

The following result is the main theorem of \cite{hill}, extended slightly to include link groups of virtual links in part (3). 
 
\begin{theorem}[Hillman] \label{hillman} 
If $G$ is a finitely presented group with $G/G' \cong \ZZ^\mu$, then the following conditions are equivalent.
\begin{enumerate}
\item[(1)] $\cE_{\mu-1}=0$ (i.e. the first $\mu-1$ Alexander ideals vanish).
\item[(2)] For each integer $q \ge 1$, the $q$-th Chen group of $G$ satisfies
$$Q(G;q) \cong Q(F(\mu); q),$$
where $F(\mu)$ is a free group of rank $\mu$.
\end{enumerate}
Further, if $G=G_L$ is a link group of a virtual link (see Definition \ref{defn:link-group}), then (1) and (2) are equivalent to:
\begin{enumerate}
\item[(3)] The longitudes of $L$ are in $G(\infty).$
\end{enumerate}
\end{theorem}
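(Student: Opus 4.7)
The plan is to take the equivalence $(1)\Leftrightarrow(2)$ from Hillman's original argument in \cite{hill}, which is a purely group-theoretic statement about finitely presented groups with $G/G'\cong\ZZ^\mu$ and applies verbatim once we have the Wirtinger presentation of $G_L$, and then to focus all the real work on proving $(2)\Leftrightarrow(3)$ using the virtual Chen--Milnor presentation of Theorem \ref{theorem_milnor}. The strategy closely parallels that of Proposition \ref{prop-nil-quot}, but with $(G_L)_n$ replaced everywhere by $(G_L)_n G_L''$, so that one works inside the universal metabelian nilpotent quotients rather than the nilpotent quotients themselves. A useful first observation is that condition (2) is equivalent to the statement $G_L/(G_L)_n G_L'' \cong F(\mu)/F(\mu)_n F(\mu)''$ for every $n\geq 1$, since the Chen groups $Q(G;q)$ are precisely the successive quotients of this filtration of $G/G''$.

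For $(3)\Rightarrow(2)$ I would fix $n$ and apply Theorem \ref{theorem_milnor} to present $G_L/(G_L)_n$ as $\langle x_1,\ldots,x_\mu \mid [x_i,\ell_{i,n}],\ F(\mu)_n\rangle$. Further quotienting by the image of $G_L''$ exhibits $G_L/(G_L)_n G_L''$ as a quotient of $F(\mu)/F(\mu)_n F(\mu)''$ by the normal closure of the classes $[x_i,\ell_{i,n}]$. The hypothesis $\ell_i \in G_L(\infty)\subseteq (G_L)_n G_L''$ forces $\ell_{i,n}$ to become trivial in $G_L/(G_L)_n G_L''$, so every relation $[x_i,\ell_{i,n}]$ is redundant there, and the quotient map $F(\mu)/F(\mu)_n F(\mu)'' \to G_L/(G_L)_n G_L''$ is an isomorphism. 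Taking successive quotients then yields $Q(G_L;q)\cong Q(F(\mu);q)$ for all $q$, which is condition (2).

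The harder direction is $(2)\Rightarrow(3)$. Assuming the isomorphisms of Chen groups, an induction on $n$ shows that the Chen--Milnor surjection $F(\mu)/F(\mu)_n F(\mu)'' \twoheadrightarrow G_L/(G_L)_n G_L''$ is actually an isomorphism, which in turn forces each image $[x_i,\ell_{i,n}]$ to be trivial in the free metabelian nilpotent group $F(\mu)/F(\mu)_n F(\mu)''$. I then need to promote this triviality of commutators into the statement $\ell_{i,n}\in F(\mu)_{n-1}F(\mu)''$, and this is where I expect the main obstacle to lie. The argument will rely on the fact that each longitude $\ell_i$ is defined to have zero exponent sum with respect to its meridian $x_i$, together with a Magnus-type description of the centralizer of a basis element in a free metabelian nilpotent group; this is the metabelian analogue of the Magnus--Karrass--Solitar fact invoked in the proof of Proposition \ref{prop-nil-quot}, and care is needed because passing from the nilpotent to the metabelian nilpotent setting enlarges the centralizer and weakens the conclusion one can draw. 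Once this step is carried out for every $n$, one concludes $\ell_i\in\bigcap_n (G_L)_{n-1} G_L'' = G_L(\infty)$, establishing (3).
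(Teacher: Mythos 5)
Your argument for $(3)\Rightarrow(2)$ coincides with the paper's: after invoking the Chen--Milnor presentation (Theorem \ref{theorem_milnor}), the hypothesis $\ell_i\in G(\infty)\subseteq (G_L)_nG_L''$ makes the relators $[x_i,\ell_{i,n}]$ redundant in $G_L/(G_L)_nG_L''$, giving $F/F_nF''\cong G_L/(G_L)_nG_L''$ for all $n$, and the Chen group isomorphisms follow by comparing successive quotients (the paper packages this last step as a snake-diagram chase, you as ``taking successive quotients''; same content). Your treatment of $(1)\Leftrightarrow(2)$ also matches the paper, which simply cites \cite{hill}.

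The divergence is in $(2)\Rightarrow(3)$. The paper's entire proof of this implication is one sentence: ``This follows by the proof given in \cite{hill}.'' You instead attempt to spell out that argument, and you correctly identify the crux as a metabelian analogue of \cite[Cor.\ 5.12(iii)]{Magnus-Karrass-Solitar}: if $w$ has zero $x_i$-exponent sum and $[x_i,w]\in F_nF''$, then $w\in F_{n-1}F''$. You flag this as the point where you ``expect the main obstacle to lie,'' but you do not prove or cite it, so as written your argument has a gap exactly there. For the record, the lemma is true, and this is what Hillman's proof (going back to Chen) establishes: working in the Magnus embedding $F/F''\hookrightarrow \ZZ^\mu\ltimes A$ with $A$ a free $\ZZ[t_1^{\pm1},\ldots,t_\mu^{\pm1}]$-module, the lower-central filtration corresponds to powers of the augmentation ideal $I$, and multiplication by $t_i-1$ is injective on the associated graded ring $\mathrm{gr}_I\ZZ[\ZZ^\mu]\cong\ZZ[y_1,\ldots,y_\mu]$, which is an integral domain. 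So $(t_i-1)\sigma(w)\in I^{n-2}A$ forces $\sigma(w)\in I^{n-3}A$, i.e.\ $w\in F_{n-1}F''$. Your worry that ``passing to the metabelian setting enlarges the centralizer'' is unfounded at the associated-graded level; the weakened conclusion $w\in F_{n-1}F''$ (rather than $w\in F_{n-1}$) is precisely what is wanted, since you are aiming for $\ell_i\in\bigcap_n(G_L)_nG_L''=G_L(\infty)$, not the stronger $\ell_i\in(G_L)_\om$. In short: your outline for $(2)\Rightarrow(3)$ is the correct one and reproduces Hillman's argument, but to make the proof complete you should either establish this lemma or, as the paper does, cite \cite{hill} for it.
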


\begin{proof}
The proof of equivalence of (1) and (2) can be found in \cite{hill}.
Our focus is on showing that
(2) and (3) are equivalent for link groups of virtual links. To that end, suppose that $G=G_L$ is the link group of a link
$L \subset \Si \times I$. Let $\ell_1,\ldots, \ell_\mu$ denote the longitudes of $L$.

(2) implies (3). This follows by the proof given in \cite{hill}.

(3) implies (2). Suppose that $\ell_i \in G(\infty)$ for $i =1,\ldots, \mu$.
Let $F=F(\mu)$ be the free group generated by $x_1,\ldots,x_\mu$.
Then arguing as in Proposition \ref{prop-nil-quot}, it follows that
$G/G_q G'' \cong F/F_q F''$ for all $q \geq 1$. Consider the following commutative diagram, where the vertical maps $\al, \be$, and $\ga$ are induced by the homomorphism $F \to G$ sending $x_i$ to a meridian for the $i$-th component of $L$.
$$\xymatrix{
1 \ar[r] & {F_qF''}/{F_{q+1} F''} \ar[r] \ar[d]^\al & {F}/{F_{q+1} F''} \ar[r] \ar[d]^\be & {F}/{F_q F''} \ar[r] \ar[d]^\ga & 1 \\
1 \ar[r] & {G_q G''}/{G_{q+1} G''} \ar[r] & {G}/{G_{q+1} G''} \ar[r] & {G}/{G_q G''} \ar[r] & 1
}$$
The two rows are exact, and $\be$ and $\ga$ are isomorphisms. Therefore,  $\al$ is also an isomorphism, and it follows that $Q(F; q) \cong Q(G;q)$ for all $q \geq 1$. Thus, we see that (2) holds.
\end{proof}


\section{Crossing the crossings and the tube construction} \label{S4}
\subsection{The  Bar-Natan \!\! $\Zh$-construction} \label{S4-1}

In this section, all virtual links are assumed to be oriented, and $\Zh$ refers to the Cyrillic letter and is pronounced \emph{``zh''}. This letter was chosen as a symbol for the operation of ``crossing the crossings'' \cite{Dror}.

Given a virtual link diagram $D$, following \cite{Dror}, one can construct a new diagram $\Zh(D)$ as follows.
If $D$ has $\mu$ components, then $\Zh(D)$ has $\mu +1$ components, it is obtained from $D$ by adding one new component, which is drawn in blue and includes two new overcrossings for each classical crossing of $D$ (see Figure \ref{zh-crossing}). The new overcrossings are connected to one another, adding virtual crossings wherever needed to cross any existing arcs.

For a given virtual link diagram $D$ we write $\Zh(D) = D \cup \om.$ Here $\om$ refers to the new component and $\Zh(D)$ is considered up to \emph{semi-welded equivalence}, which is defined next. 

\begin{figure}[h]
\centering
  \includegraphics[scale=0.70]{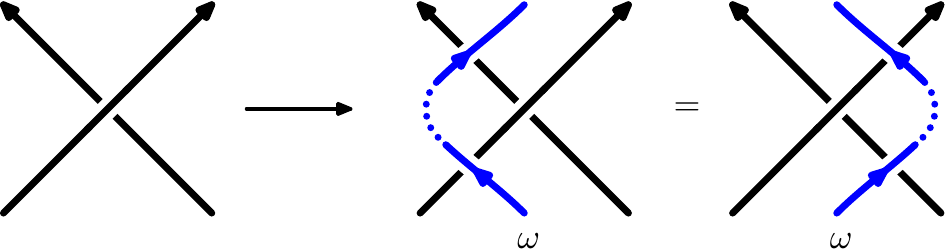} \qquad \qquad  \includegraphics[scale=0.70]{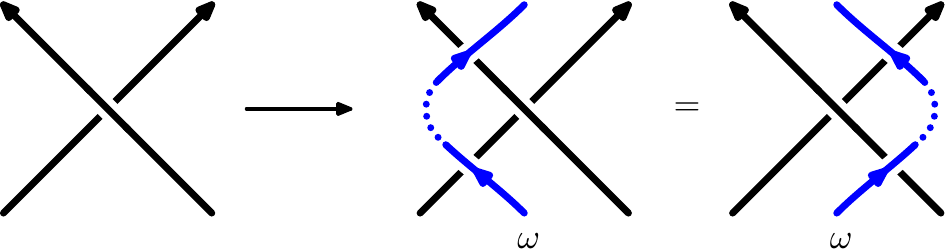} 
 \caption{The $\Zh$-construction for positive and negative crossings.} \label{zh-crossing}
\end{figure} 

\begin{definition} [semi-welded equivalence] \label{defn:semi-welded}
Let $D$ be a virtual link diagram with $\mu$ components $D_1,  \ldots, D_\mu.$
Fix nonnegative integers $n,m$ with $n+m =\mu$, and regard  the components $D_1, \ldots, D_n$
as regular (or virtual) components and the components $D_{n+1}, \ldots, D_{n+m}$ as $\om$-components (or welded components).
A semi-welded equivalence of $D$ is a sequence of moves which include
generalized Reidemeister moves anywhere, and forbidden overpass moves only when the overcrossing arc belongs to one of the $\om$-components.
\end{definition}

An elementary exercise shows that semi-welded equivalence induces an equivalence relation on virtual link diagrams.   
Further, if $D$ and $D'$ are semi-welded equivalent, then they necessarily have the same number of regular components and the same number of $\om$-components. If $m=0,$ then there are no $\om$-components, and $(\mu,0)$ semi-welded equivalence is just virtual equivalence. On the other hand, if $n=0$, then every component is an $\om$-component, and $(0,\mu)$ semi-welded equivalence is just welded equivalence. 
In general, for any $n,m$ with $n+m = \mu$, virtual equivalence implies $(n,m)$ semi-welded equivalence,
and $(n,m)$ semi-welded equivalence implies welded equivalence.

In this paper, we will consider only semi-welded equivalence in the case where there is just one $\om$-component, namely the one arising from the $\Zh$-construction.
Henceforth, we will write ``semi-welded equivalence'' instead of ``$(n,1)$ semi-welded equivalence.''

One immediate consequence is that deleting the $\om$-component from $\Zh(D)$ simply returns the original link diagram $D$.
Furthermore, since the $\om$-component in $\Zh(D)$ consists exclusively of overcrossings, these crossings may be ordered without altering its semi-welded type. This follows essentially from the forbidden overpass move, as we now explain.

First, fix a basepoint and form a loop with the $\om$-component by connecting the overcrossing arcs in any fashion, introducing virtual crossings as needed to make the connections. This provides a way to order the crossings of the $\om$-component, albeit arbitrary. However, note that any two consecutive overcrossings of the $\om$-component can be transposed. If the two undercrossing arcs are adjacent to one another, this can been seen by performing a virtual Reidemeister two move and forbidden overpass move (see Figure \ref{over-crossing}). If the two undercrossing arcs are not adjacent, one can apply detour moves to arrange for them to be adjacent (see Figure \ref{over-crossing-v}).

\begin{figure}[h]
\centering
  \includegraphics[scale=1.0]{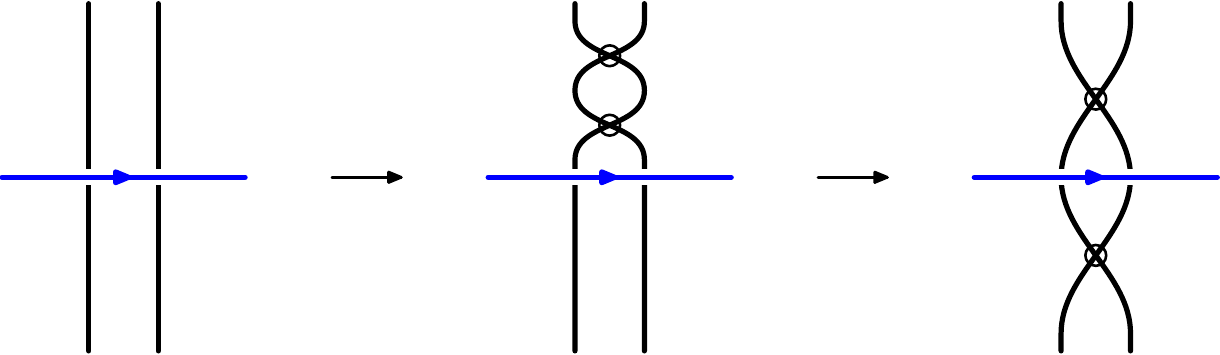} 
 \caption{Two adjacent overcrossings on the $\om$-component can be transposed using a virtual Reidemeister two move and a forbidden overpass move.} \label{over-crossing}
\end{figure} 

\begin{figure}[h]
\centering
\includegraphics[scale=1.0]{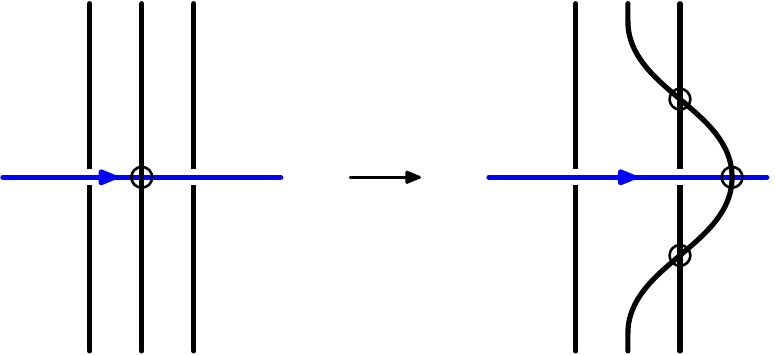} 
 \caption{Detour moves can be used to make any two consecutive overcrossings on the $\om$-component adjacent.} \label{over-crossing-v}
\end{figure} 

Consequently, the semi-welded link type of $\Zh(D)$ is independent of the order chosen for the overcrossings of the $\om$-component. It is also independent of the choice of basepoint, which can be seen using detour moves. In Figure \ref{zh-crossing}, the dotted curves illustrate a way to connect the overcrossings, but one could alternatively connect them in any other way without altering the underlying semi-welded link. 

For any subset of crossings of the $\om$-component, one can draw a loop starting and ending at the basepoint. In that case, we say that the $\om$-component contains the loop.

\begin{proposition}[Bar-Natan] \label{prop:bar-natan}
The semi-welded link $\Zh(D)$ depends only on the virtual link of $D$.
\end{proposition}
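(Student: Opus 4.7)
My plan is to verify that $\Zh$ carries each generalized Reidemeister move on $D$ to a sequence of semi-welded moves on $\Zh(D)$. Since these moves generate virtual equivalence, this will suffice. I will use the three facts already established in the text preceding the statement: (i) the semi-welded type of $\Zh(D)$ does not depend on the order chosen for the overcrossings of the $\om$-component, since any two consecutive overcrossings of $\om$ can be transposed via a virtual R2 together with a forbidden overpass (Figures \ref{over-crossing} and \ref{over-crossing-v}); (ii) it does not depend on the basepoint or on how the overcrossing arcs of $\om$ are connected up, which follows by the detour move; and (iii) forbidden overpass is permitted whenever the overcrossing arc belongs to $\om$.

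The purely virtual moves (vR1, vR2, vR3) and the detour move involve no classical crossings, so they create or destroy no overcrossings of $\om$; the corresponding modification of $\Zh(D)$ is obtained by applying the same virtual move on the underlying diagram and re-routing the $\om$-component by detour moves, both of which are allowed. The mixed move (a classical crossing passing through a virtual crossing) is handled in the same way, since the two overcrossings of $\om$ attached to the classical crossing travel with it and can be re-routed past the virtual crossing by further detour moves. This disposes of all moves that do not alter the classical crossing structure of $D$.

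For the classical moves R1, R2, R3, the key trick is to use fact (i) to migrate the overcrossings of $\om$ produced by the $\Zh$-construction so that they become adjacent on the $\om$-component, and to then cancel them in pairs of opposite sign by a classical R2 before performing the move on $D$. Concretely, for R1 the two overcrossings of $\om$ introduced over the kink have opposite signs and are over the same arc of $D$; after transposing them to be adjacent on $\om$ (fact (i)) they cancel by R2, and the remaining kink in $D$ is then removed by the original R1 to match $\Zh(D')$. For R2, the four overcrossings of $\om$ over the bigon region split into two opposite-sign pairs that can be grouped adjacently on $\om$ and cancelled in the same manner; the underlying R2 on $D$ then completes the reduction. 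For R3, one first brings the six overcrossings of $\om$ over the triangle into a convenient order on $\om$, uses detour moves to slide the $\om$-arcs to either side of the triangle, performs R3 on $D$, and reassembles $\Zh(D')$.

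The main obstacle is simply the bookkeeping for R3, where there are several sign configurations to check and the $\om$-overcrossings must be tracked across the move. However, once fact (i) is available the analysis is essentially formal: the ability to freely permute overcrossings of $\om$ reduces every case to a local check that $\Zh$ applied to the ``before'' and ``after'' pictures differ only by virtual routing of $\om$, which is a detour move. No new ideas beyond those already isolated in the preceding discussion are needed.
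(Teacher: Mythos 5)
Your treatment of the $\Om_1$ and $\Om_2$ cases agrees with the paper's: the two (resp.\ four) $\om$-overcrossings introduced by the construction occur in opposite-sign pairs that cancel via classical Reidemeister~2 moves, after which the $\om$-arc carries no classical crossings and can be erased. The $\Om_3$ case, however, contains a genuine gap, and it is not the bookkeeping you flag. You propose to ``use detour moves to slide the $\om$-arcs to either side of the triangle,'' but a detour move can only relocate an arc all of whose crossings are virtual; the portion of $\om$ near the triangle passes classically over the arcs of $D$ at six places, and those overcrossings block any detour. The move you actually need is the forbidden overpass (your fact~(iii)), and the observation that makes it usable, which your outline does not contain, is that the $\om$-arc around the three crossings forms a loop with no virtual crossings lying entirely over $D$. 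Forbidden overpasses combined with Reidemeister~2 moves then pull this loop completely off the triangle; after performing $\Om_3$ on the underlying diagram, one inserts a loop around the triangle in the opposite sense and reassembles the image of $D'$. That is the paper's argument for $\Om_{3a}$.

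Your summary claim, that once fact~(i) is in hand ``every case'' reduces to a local check that the before and after pictures ``differ only by virtual routing of $\om$,'' is exactly what fails for $\Om_3$. Unlike $\Om_1$ and $\Om_2$, $\Om_3$ neither creates nor destroys crossings of $D$, so there is no opposite-sign pair of $\om$-overcrossings to cancel; after the move the $\om$-overcrossings must be carried past the classical crossings of the triangle, which neither a transposition along $\om$ (fact~(i)) nor a detour move can do. Only the forbidden overpass accomplishes that, and assembling the six $\om$-overcrossings into a single unknotted loop is the device that makes the pull-off transparent. A smaller remark: the paper first invokes Polyak's result that $\Om_{1a}$, $\Om_{1b}$, $\Om_{2a}$, $\Om_{3a}$ generate all oriented Reidemeister moves, which disposes of the sign-by-sign case analysis you anticipate for $\Om_3$.
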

\begin{proof}
Suppose $D$ and $D'$ are two virtual link diagrams that are related by generalized Reidemeister moves, then we must show that $\Zh(D)$ and
$\Zh(D')$ are semi-welded equivalent. It is enough to prove this in the case $D$ and $D'$ are related by a single Reidemeister move. 

\begin{figure}[h]
\centering
 \includegraphics[scale=0.85]{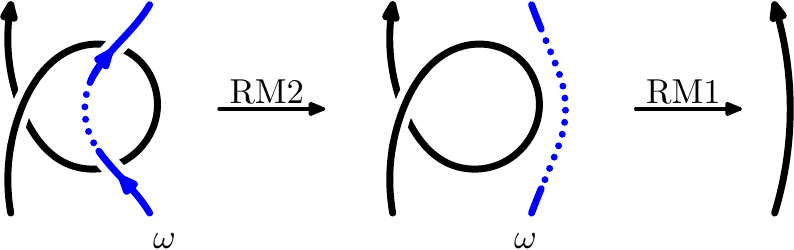} 
 \caption{Invariance of $\Zh(D)$ under $\Om_{1a}$.}\label{rm1a-zh}
\end{figure} 

To this end, Polyak's result in \cite{Polyak} shows that all the Reidemeister moves can be generated from the four moves $\Om_{1a},\Om_{1b},\Om_{2a},$ and $\Om_{3a}$, so it is enough to prove invariance of $\Zh(D)$ under each one of these four moves.

\begin{figure}[h]
\centering
 \includegraphics[scale=0.85]{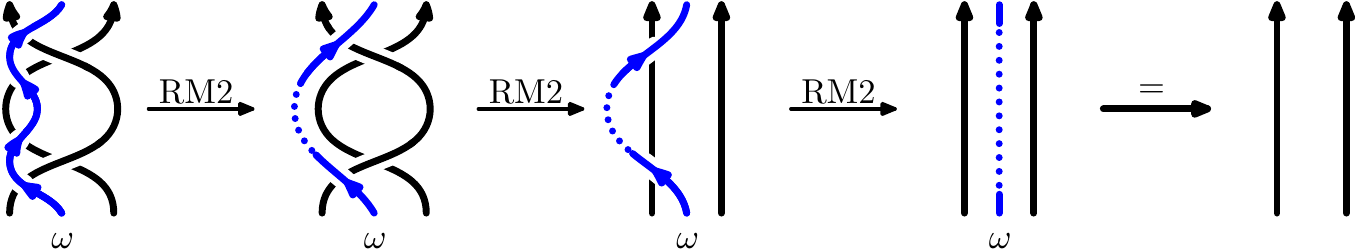} 
 \caption{Invariance of $\Zh(D)$ under $\Om_{2a}$.}\label{rm2a-zh}
\end{figure} 

The proof in the case of the $\Om_{1a}$ move is shown in Figure \ref{rm1a-zh}. Notice that after performing a Reidemeister 2 move, there are no classical crossings on the $\om$-arc, which is why it appears dotted in the middle frame of Figure \ref{rm1a-zh}. As such, it can be erased altogether. The proof for the $\Om_{1b}$ move is similar. 

The argument for the $\Om_{2a}$ move is shown in Figure \ref{rm2a-zh}. The first frame shows the $\om$-arc with four overcrossings, but they all eventually cancel out under successive Reidemeister 2 moves. Once the $\om$-arc can be drawn without classical crossings, it can be erased just as before.  

\begin{figure}[h]
\centering
 \includegraphics[scale=0.85]{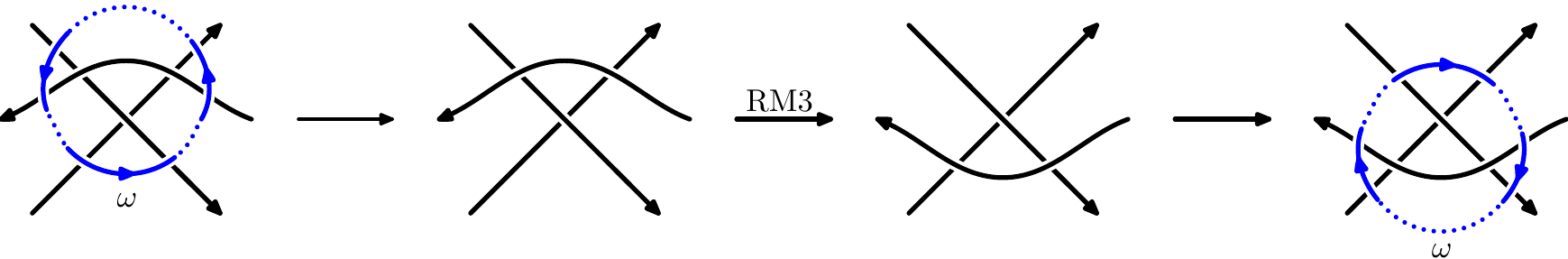} 
  \caption{Invariance of $\Zh(D)$ under $\Om_{3a}$.}\label{rm3a-zh}
\end{figure} 

The proof for the $\Om_{3a}$ move is described in Figure \ref{rm3a-zh}.
In the first frame, the $\om$-arc is seen to contain a loop around the three crossings. 
Notice that this loop can be drawn without any virtual crossings. Therefore, using the forbidden overpass, the loop can be pulled off. After performing a 
Reidemeister 3 move, one can then use the forbidden overpass
to insert a loop going in the opposite direction. This shows invariance of
the $\Zh$-construction under the $\Om_{3a}$ move and completes the proof.
\end{proof}

Figure \ref{2-1-Zh} shows the result of applying the $\Zh$-construction to the virtual trefoil, also known as the virtual knot $2.1$, and in that figure all crossings along the dotted blue curves are virtual.

\begin{figure}[h]
\centering
\begin{tabular}{c} 
$$\xymatrix{ \begin{tabular}{c} \includegraphics[scale=0.950]{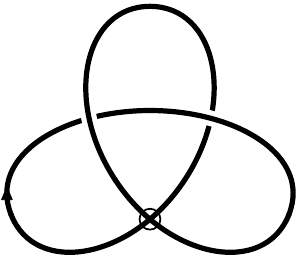} \end{tabular} \ar[r]^{\zh}  
& \begin{tabular}{c}\includegraphics[scale=0.950]{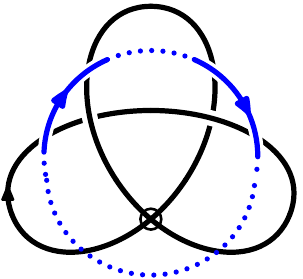} \end{tabular} }$$ 
\end{tabular}
 \caption{The $\Zh$-construction applied to the virtual trefoil.}\label{2-1-Zh}
\end{figure} 

We now describe an alternative formulation of the $\Zh$-construction in terms of Gauss diagrams which is sometimes useful. Suppose that $\Gamma$ is a Gauss diagram representing the virtual link $L$.
Then $\Zh(\Gamma)$ is the Gauss diagram with one extra component obtained by adding two new arrows (both overcrossings) to either side of each existing chord of $\Gamma$. The new arrows are drawn in blue, and they comprise the $\om$-component. There are two possible configurations for the new arrows, as shown on the right of Figure \ref{GD-crossing-zh}, either before the foot and after the head of the existing chord, or after the foot and before the head. These two configurations are equivalent by a semi-welded RM3 move. The two new arrowheads have opposite sign; the one nearest the head of the existing chord has sign equal to that of the chord, and the one nearest the foot has the opposite sign. Since $\Zh(\Gamma)$ is considered up to semi-welded equivalence, the order of the new arrows is arbitrary. For that reason, we do not bother to draw the core circle for the $\om$-component. If one wants, one could degenerate the core circle of the  new component to a point, reflecting the fact that  the new arrows can be reordered arbitrarily.

One can use this approach to give an alternative proof of Proposition \ref{prop:bar-natan} in terms of Gauss diagrams. We leave the details as an entertaining exercise for the reader.

\begin{figure}[htb]
\begin{tabular}{c} 
$$\xymatrix{ \begin{tabular}{c} \includegraphics[scale=0.80]{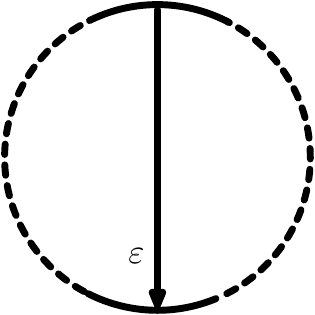} \end{tabular} \ar[r]^{\hspace{-2.0cm}\zh} 
& \begin{tabular}{c}\includegraphics[scale=0.80]{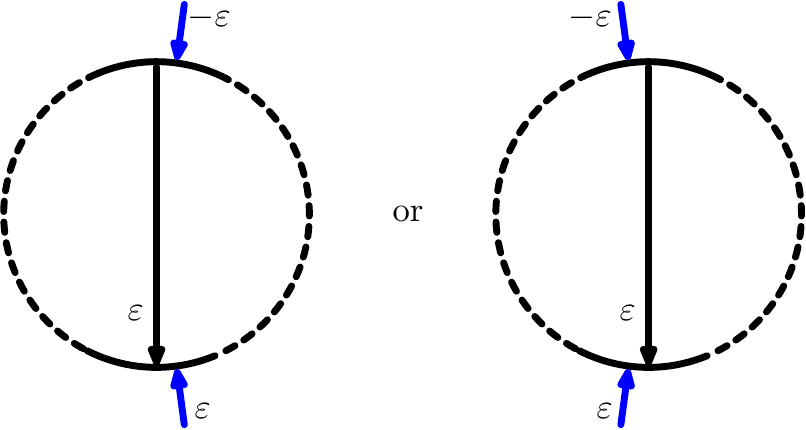} \end{tabular} }$$ 
\end{tabular}
\caption{The $\Zh$-construction applied to a chord in a Gauss diagram.} \label{GD-crossing-zh}
\end{figure}

\begin{proposition}
The $\Zh$-construction is functorial under virtual link cobordism. 
Namely, given a cobordism between virtual link diagrams $D$ and $D'$,
there is a cobordism between the semi-welded link diagrams $\Zh(D)$ and $\Zh(D').$
Furthermore, if the cobordism from $D$ to $D'$ has $b$ births, $d$ deaths, and $s$ saddles,
then the cobordism from $\Zh(D)$ to $\Zh(D')$ also has $b$ births, $d$ deaths, and $s$ saddles.
\end{proposition}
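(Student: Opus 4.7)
The plan is to decompose the given virtual link cobordism from $D$ to $D'$ into its elementary moves (generalized Reidemeister moves, births, deaths, and saddles) and check that $\Zh$ is functorial on each type separately, with births going to births, deaths to deaths, and saddles to saddles. Since Proposition \ref{prop:bar-natan} already handles all generalized Reidemeister moves (each contributes $0$ to the $b,d,s$ counts), it suffices to address the three remaining local moves.

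First I would handle births and deaths. A birth introduces a small trivial unknotted unlinked component $U$ disjoint from the rest of $D$, and this component carries no classical crossings. Consequently, the $\Zh$-construction produces no new arrows on the $\om$-component when applied to $U$. Using detour moves (which are semi-welded equivalences), the $\om$-arc can be routed so that it stays entirely away from a small disk containing $U$. Thus $\Zh(D \cup U)$ is semi-welded equivalent to $\Zh(D) \cup U$, and the birth on $D$ lifts to a birth on $\Zh(D)$. Deaths are handled by running the same argument backward.

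Next I would handle saddles. A saddle is performed inside a small square $B \subset \RR^2$ meeting the diagram in two parallel arcs of the regular components, and by a preliminary isotopy we may assume $B$ contains no classical crossings of $D$. In $\Zh(D)$, the $\om$-component consists of overcrossings at the classical crossings of $D$ together with connecting arcs whose other crossings are virtual; using detour moves we may push all connecting arcs out of $B$. After this preparation, the saddle move can be performed inside $B$ exactly as in $D$, without altering any classical crossing or any portion of the $\om$-component. The resulting diagram is $\Zh(D')$ up to reordering of the overcrossings on the $\om$-component, which is a semi-welded equivalence by the discussion following Definition \ref{defn:semi-welded}. Hence each saddle lifts to exactly one saddle, so the counts $b,d,s$ are preserved.

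The main potential obstacle is the saddle case: one must verify that the local change of a saddle move does not force any alteration of the $\om$-component. This is resolved by the detour/forbidden-overpass flexibility of the $\om$-arcs, which lets us clear the saddle disk before the move and then freely reconnect the overcrossings of $\Zh(D')$ in any cyclic order without disturbing the lifted cobordism. Concatenating the lifts of all the elementary moves in the given cobordism produces a cobordism from $\Zh(D)$ to $\Zh(D')$ with exactly $b$ births, $d$ deaths, and $s$ saddles, as required.
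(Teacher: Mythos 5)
Your proof is correct and follows the same strategy as the paper: decompose the cobordism into elementary moves, invoke Proposition~\ref{prop:bar-natan} for the generalized Reidemeister moves, and observe that births, deaths, and saddles commute with $\Zh$ because the construction only modifies the diagram locally at classical crossings. Your write-up is somewhat more detailed than the paper's (which simply notes the locality of $\Zh$), in particular in verifying that detour moves clear the saddle disk of $\om$-arcs and that the resulting reordering of overcrossings on the $\om$-component is absorbed by semi-welded equivalence.
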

\begin{proof}
Notice that a cobordism can be decomposed into a sequence of operations, each of which is one of the generalized Reidemeister moves or a birth, death, or saddle. We have already seen that the $\Zh$-construction commutes with each of the generalized Reidemeister moves, so all that remains is to prove it commutes with births, deaths, and saddles.

Since the $\Zh$-construction only affects the virtual link locally at the classical crossings, it follows that if $D'$ is obtained from $D$ by a birth, death, or saddle, then $
\Zh(D')$ is obtained from $\Zh(D)$ by the very same birth, death, or saddle.

It follows that if $W$ is a virtual link cobordism from $D$ to $D'$, then
$\Zh(W)$ is a semi-welded link cobordism from $\Zh(D)$ to $\Zh(D').$ 
\end{proof}

Observe that, from the above proof, it follows that the Euler number of the cobordism from $D$ to $D'$ is equal to that of the cobordism from $\Zh(D)$ to $\Zh(D')$.

\begin{corollary} \label{cor_conc}
If $L$ and $L'$ are  concordant virtual links, then $\Zh(L)$ and $\Zh(L')$ are concordant as semi-welded links.
\end{corollary}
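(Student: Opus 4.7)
The plan is to upgrade the preceding proposition, which establishes functoriality under cobordism, from cobordism to concordance by tracking the birth/death/saddle counts component-by-component. First, I would unpack Definition \ref{defn:cobordism}: a concordance from $L = K_1 \cup \cdots \cup K_\mu$ to $L' = K'_1 \cup \cdots \cup K'_\mu$ consists of a sequence of generalized Reidemeister moves together with $b_i$ births, $d_i$ deaths, and $s_i$ saddles occurring on the component $K_i$, satisfying $b_i + d_i = s_i$ for each $1 \le i \le \mu$.

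Next, I would apply the preceding proposition to obtain a semi-welded cobordism from $\Zh(L) = L \cup \om$ to $\Zh(L') = L' \cup \om'$ realized by the same sequence of moves lifted through the $\Zh$-construction. Since the $\Zh$-construction only modifies diagrams locally at classical crossings and commutes with births, deaths, and saddles (as shown in that proposition), the $i$-th original (now ``virtual'') component of $\Zh(L)$ undergoes exactly $b_i$ births, $d_i$ deaths, and $s_i$ saddles, so the relation $b_i + d_i = s_i$ is inherited.

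The only additional point to verify is that the $\om$-component picks up no births, deaths, or saddles during the lifted cobordism. This is immediate from the proof of the preceding proposition: all births, deaths, and saddles occur on the original components and are simply copied verbatim, while the $\om$-component is merely rearranged by (semi-welded) Reidemeister and forbidden overpass moves. Hence $b_{\mu+1} = d_{\mu+1} = s_{\mu+1} = 0$, and the concordance condition holds trivially on the extra component. Combining these observations, every component of $\Zh(L)$ is concordant to the corresponding component of $\Zh(L')$ in the semi-welded sense, so $\Zh(L)$ and $\Zh(L')$ are concordant as semi-welded links. I do not anticipate any genuine obstacle — this is really a bookkeeping corollary of the preceding proposition, with the sole subtlety being to notice that the $\om$-component inherits no surgeries.
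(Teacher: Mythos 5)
Your proof is correct and follows exactly the route the paper intends: the corollary is stated in the paper as an immediate consequence of the preceding proposition (and the observation about Euler numbers following it), with no explicit proof given, and your write-up simply fills in the component-wise bookkeeping — in particular the observation that the $\om$-component inherits no births, deaths, or saddles and so trivially satisfies $b_{\mu+1}+d_{\mu+1}=s_{\mu+1}$.
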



In  \cite{Dror}, Bar-Natan proves that the $\Zh$-map ``factors on $u$-links,'' which is to say that if $L$ is a classical link, then the $\om$-component is unknotted and unlinked. The next theorem shows that $\Zh(L)$ factors similarly for almost classical links. 

\begin{theorem} \label{thm:ACsplit}
If $L$ is an almost classical virtual link, then the $\om$-component is unknotted and unlinked and $\Zh(L)$ 
 is semi-welded equivalent to the split almost classical link $L \sqcup \om$. In particular, if $L$ is a virtual boundary link, then $\Zh(L)$ is semi-welded equivalent to a virtual boundary link.
\end{theorem}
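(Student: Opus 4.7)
The plan is to use the Alexander numbering of an almost classical link $L$ to explicitly unlink the $\om$-component from $L$ using semi-welded moves. Represent $L$ by a diagram $D$ equipped with an Alexander numbering $\lambda$ on its short arcs, so that across each classical crossing the labels on the incoming and outgoing undercrossing short arcs differ by $1$, while the label on the overcrossing short arc is unchanged. In the semi-welded link $\Zh(D) = D \cup \om$, every crossing involving $\om$ has $\om$ as the over-strand, so the forbidden overpass is available throughout, and arbitrary transpositions of successive overcrossings along $\om$ are permitted, exactly as in the proof of Proposition~\ref{prop:bar-natan}.

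The first step is to tally, using the Gauss-diagram description of the $\Zh$-construction, the signed count of $\om$-overcrossings that land on each short arc of $D$. Each classical crossing contributes two $\om$-overcrossings of opposite signs, placed near its head and foot, and hence over short arcs with prescribed Alexander labels. A short local computation at each classical crossing, using the Alexander condition $\lambda(\text{outgoing under}) - \lambda(\text{incoming under}) = 1$ and the constancy of $\lambda$ across overcrossing arcs, shows that within every fixed Alexander level the signed count of $\om$-overcrossings on each short arc of $D$ telescopes to zero; equivalently, the $\om$-overcrossings pair up within each label class.

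The second step is diagrammatic elimination. Using iterated forbidden overpasses and detour moves, I would reorder the overcrossings along $\om$ so that each canceling pair -- two $\om$-overcrossings of opposite sign lying over a common short arc of $D$ -- becomes adjacent on $\om$; a virtual Reidemeister~II move then cancels the pair. Iterating level by level, starting from the largest Alexander label and working down, removes every crossing of $\om$ with $D$ and leaves $\om$ as a trivial circle disjoint from $D$. This establishes that $\Zh(L)$ is semi-welded equivalent to $L \sqcup \om$.

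The main obstacle is confirming that the purely algebraic cancellation supplied by the Alexander numbering is realized by an honest sequence of semi-welded moves, without creating self-entanglements of $\om$ that cannot later be removed. This requires care with the virtual self-crossings of $\om$ that accumulate while sliding overcrossings, but because every such crossing involves $\om$ as the over-strand, the forbidden overpass disentangles them. For the virtual boundary link corollary, one recalls that every virtual boundary link is almost classical; the first part then gives $\Zh(L) \sim L \sqcup \om$, and a system of pairwise disjoint Seifert surfaces for the components of $L$ together with a disjoint disk bounded by $\om$ exhibits $L \sqcup \om$ as a virtual boundary link.
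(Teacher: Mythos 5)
Your overall plan---to unlink the $\om$-component from an almost classical diagram by cancelling its overcrossings---is the right goal, but the cancellation mechanism you propose does not work, and the step you flag as the ``main obstacle'' is not the actual obstacle.

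The gap is in the first step. You claim that ``within every fixed Alexander level the signed count of $\om$-overcrossings on each short arc of $D$ telescopes to zero,'' so that the $\om$-overcrossings pair off with opposite signs over a common short arc and can then be removed by virtual Reidemeister~II after reordering along $\om$. This is not true, even for classical diagrams. Each chord of the Gauss diagram contributes exactly one $\om$-arrow near its head and one near its foot, with opposite signs; for two of them to sit over a common short arc of $D$ with cancelling signs, the head of one chord and the foot of another must be adjacent endpoints on the core circle. For the standard closed-braid diagram of the trefoil (Gauss code {\tt O1+O2+O3+U1+U2+U3+}), only one such head--foot adjacency exists (U3 next to O1), no matter which of the two local configurations you pick at each chord, so most short arcs carry a single $\om$-overcrossing of nonzero sign. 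Thus the pairwise cancellation you describe is unavailable, and moving an $\om$-overcrossing across a crossing of $D$ by a semi-welded RM3 transfers arrowheads between strands rather than merging them, so it cannot produce the missing pairs. The Alexander numbering by itself does not localize the cancellation to individual short arcs; it only controls things at the coarser level of Seifert circles/regions.

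The paper's proof supplies exactly the structure you are missing: after arranging the diagram so the black regions of the checkerboard colouring are the Seifert disks, the $\om$-overcrossings along the boundary of each (say counterclockwise) black disk stitch into a single loop with no virtual crossings, and that entire loop is pulled off the disk in one stroke---this is a many-at-once cancellation, not a sequence of RM2 pair cancellations over common short arcs. So you would need to replace ``cancel pairs by RM2'' with ``pull off a loop around each Seifert disk,'' which is the decisive idea your proposal lacks. Your treatment of the virtual boundary link corollary is fine once the first part is in place.
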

\begin{proof} In the proof of invariance under $\Om_{3a}$ in Proposition \ref{prop:bar-natan},
we showed that the $\om$-arc contains a loop without virtual crossings which can be pulled off. The same reasoning applies to any disk-like region bounded by $n$ arcs of a link diagram provided they are all oriented in the same direction. 

This observation is a key step here, and to use it we claim that $L$ can be represented by a special kind of diagram $D$ on $\Si$. A diagram $D$ on $\Si$ is called \emph{checkerboard colorable} if the regions of $\Si \sm D$ can be colored black or white so that adjacent regions have opposite colors. One may further assume, by destabilizing if necessary, that $\Si \sm D$ is a union of disks.

A virtual link is almost classical if and only if it bounds a Seifert surface, and every almost classical link is checkerboard colorable (see \cite{acpaper}).
Given such a link, using isotopies, one can arrange that the black surface of the checkerboard coloring is a Seifert surface. This will be the case if and only if all crossing are type I crossings (see e.g.~\cite{Burde-Zieschang}), which means that the black regions, which are all disks, are the ones obtained by performing oriented smoothings at all crossings. 

We therefore assume that $L$ is given as a checkerboard colored diagram where the black regions form a Seifert surface. Each black region is a disk and is oriented so that its boundary orientation agrees with the orientations of the arcs of the diagram that form its boundary.  The crossings of the $\om$-arc are co-oriented with the arcs of the diagram (see Figure \ref{fig:zh-shaded}), and thus the crossings of the $\om$-arc around a black region induce the same orientation on it. 

\begin{figure}[h]
\centering
 \includegraphics[scale=1.0]{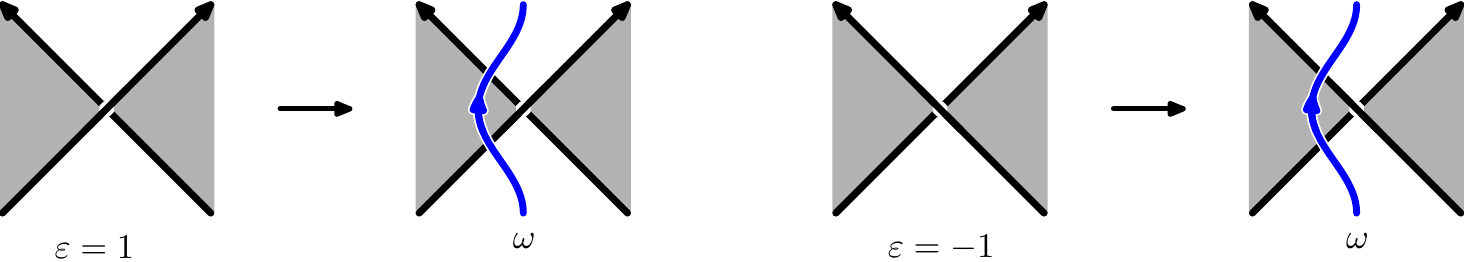} 
 \caption{The $\om$-arc for a checkerboard colored link at a positive and negative crossing.}\label{fig:zh-shaded}
\end{figure} 

The black regions are partitioned into two subsets. One consists of the black regions with boundary oriented counterclockwise, and the other consists of the black regions whose boundary is oriented clockwise. (This reflects the well-known fact that the Tait graph of a Seifert surface is bipartite.) The following argument would work with either subset of black regions, but we phrase it in terms of counterclockwise oriented regions, which coincide with the regions to the left of the $\om$-arcs. The crossings of the $\om$-arc around any such region may be stitched together to form a loop. These loops bound disks, namely the black regions themselves. We claim that these loops can be pulled off.

To prove the claim, fix a black disk and project the surface onto the plane so that the disk is embedded. This can be achieved by shrinking the disk by an isotopy if necessary. The projection gives a virtual link diagram for $L$ which has no virtual crossings in the image of the disk. The $\om$-arc will then contain a loop going around the image disk, and notice that this loop will not contain any virtual crossings. Therefore,  one can pull the loop off the disk, thereby eliminating its classical crossings.
Repeat this argument for the other counterclockwise black regions. By the forbidden overpass move, the loops may be connected in any order to form the $\omega$-component, where only virtual crossings are added between the loops.  By successively pulling loops off their black discs, one can eliminate all the remaining classical crossings of the $\om$-arc.  
This shows that  $L \sqcup \om$ is split and the theorem follows.
\end{proof}


For the next result, recall the definition of the reduced virtual link group $\wbar{G}_L$ (see Definition \ref{defn-red-virt-gp}).
  
\begin{proposition} \label{prop_gp}
Let $L$ be a virtual link with $\mu$ components, and let $\Zh(L)$ be the semi-welded link with
$\mu+1$ components obtained from the $\Zh$-construction.  Then there is an isomorphism of groups
$$\wbar{G}_L \cong G_{\! \zh(L)}$$
from the reduced virtual link group of $L$ to the link group of $\Zh(L)$.
\end{proposition}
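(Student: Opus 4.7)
The plan is to compare presentations directly. By Definition \ref{defn-red-virt-gp}, the group $\wbar{G}_L$ is generated by the short arcs of $L$ together with an auxiliary generator $v$, with relations at each classical crossing as shown in Figure \ref{crossing-red}. For $G_{\zh(L)}$ we use the Wirtinger presentation of the semi-welded link $\Zh(L)$: since the $\omega$-component consists entirely of overcrossings, it contributes a single Wirtinger generator, which we identify with $v$. The remaining Wirtinger generators come from the subsegments of $L$ in $\Zh(L)$ delimited by consecutive undercrossings, where ``undercrossing'' now includes both the original classical undercrossings of $L$ and the $\omega$-overcrossings introduced by the $\Zh$-construction. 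At each classical crossing of $L$ there are three Wirtinger relations: one from the classical crossing itself and one from each of the two adjacent $\omega$-overcrossings.

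The next step is to apply Tietze transformations. Fixing one of the conventions of Figure \ref{GD-crossing-zh} (say ``before the foot and after the head''), a local analysis at each classical crossing shows that exactly one ``small'' Wirtinger arc appears there, namely the piece on the incoming understrand between the $\omega$-overcrossing and the classical crossing, while the $\omega$-overcrossing on the outgoing overstrand merely delimits one main Wirtinger arc from the next. The Wirtinger relation at the small arc expresses it as a $v^{\pm 1}$-conjugate of an adjacent main arc, so we use it to eliminate the small generator. After these $n$ eliminations (one per classical crossing of $L$), the remaining generators are in bijection with the short arcs of $L$ together with $v$, and each classical crossing now contributes exactly two relations: the Wirtinger relation at the classical crossing with the small-arc substituted in, and the Wirtinger relation at the $\omega$-overcrossing on the outgoing overstrand. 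A direct comparison with Figure \ref{crossing-red} then shows that these are precisely the defining relations of $\wbar{G}_L$, yielding the isomorphism $\wbar{G}_L \cong G_{\zh(L)}$.

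The main obstacle is the bookkeeping of signs: one must fix an orientation for the $\omega$-component and verify that the conjugations produced by the Tietze eliminations match Figure \ref{crossing-red} on the nose, separately for positive and negative crossings of $L$. By Proposition \ref{prop:bar-natan} the semi-welded link $\Zh(L)$ is independent of such conventions, so any consistent choice suffices, and the verification reduces to an elementary local computation at a single crossing of each sign.
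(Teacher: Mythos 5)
Your proposal is correct and follows essentially the same route as the paper: take the Wirtinger presentation of $\Zh(L)$, observe that the $\om$-component contributes a single generator since it only overcrosses, notice that each classical crossing of $L$ produces three Wirtinger relations in $\Zh(L)$, and perform one Tietze elimination per crossing to remove the ``small'' generator and recover the relations of $\wbar{G}_L$. The only cosmetic difference is that you place the $\om$-overcrossings in the ``before the foot, after the head'' configuration (so the eliminated arc sits on the incoming understrand), whereas the paper uses ``after the foot, before the head'' (so the eliminated arc $x$ sits on the outgoing understrand); as you note, this choice is immaterial by Proposition~\ref{prop:bar-natan}, and the resulting relations are the same after substitution.
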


\begin{proof}
The reduced virtual link group $\wbar{G}_L$ is generated by the short arcs of $L$, which run from one classical crossing of $L$ to the next. The link group $G_{\zh(L)}$ is generated by the arcs of $\Zh(L)$, which run from one undercrossing of $\Zh(L)$ to the next.
In Figure \ref{zh-group-rel}, we consider the effect of the $\Zh$-construction on the crossings of $L$. The $\om$-component divides each overcrossing arc into two. Consequently, the $\Zh$-construction divides the arcs of $L$ into short arcs. It also introduces one new arc, denoted $x$ in Figure \ref{zh-group-rel}, along the outgoing undercrossing arc.  The relations for $G_{\zh(L)}$ are given in Figure \ref{zh-group-rel}. Since the generator $x$ can be written as a word in the other generators, it can be removed from the presentation of $G_{\zh(L)}$. The remaining generators and relations match exactly those of $\wbar{G}_L$. Indeed, for a positive crossing, substituting $x=c^{-1} b c$ and $c=\om a \om^{-1}$ into the bottom equation gives:
$$d=\om^{-1} x \om=\om^{-1}c^{-1} b c \om= \om^{-1} \om a^{-1} \om^{-1} b \om a \om^{-1} \om=a^{-1}\om^{-1} b \om a.$$
For a negative crossing, a similar argument gives $c=b \om a \om^{-1} b^{-1}$. 

Define $\Phi \co \wbar{G}_L \lto G_{\zh(L)}$ to be the homomorphism that sends the short arc generators of $L$ to the corresponding arc generators of $\Zh(L)$ and sends $v$ to $\om.$ 
Comparing the relations of Figure \ref{zh-group-rel} with the defining relations of $\wbar{G}_L$ (see Figure 
\ref{crossing-red}), it follows that $\Phi$ is an isomorphism.
\end{proof}

\begin{figure}[h]
\centering
  \includegraphics[scale=0.90]{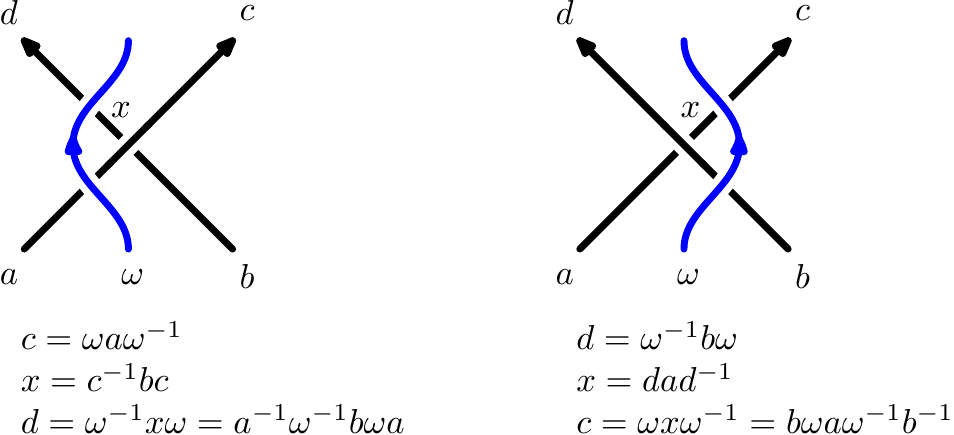}  
 \caption{The relations in the link group of $\Zh(L)$ at a crossing.} \label{zh-group-rel}
\end{figure} 

Considering $\Zh(L)$ as a welded link, since the $\om$-component has only overcrossings, one can easily see that the Wirtinger presentation of the link group $G_{\zh(L)}$ in Equation \eqref{eqn:link-group} has deficiency one. 

If $L$ is almost classical, then Theorem \ref{thm:ACsplit} applies to show that  the meridian of the $\om$-component generates a free cyclic summand of the link group $G_{\zh(L)}$. Consequently, we have $G_{\zh(L)}\cong G_L*\ZZ$.  Combined with Proposition \ref{prop_gp}, this provides a new proof of Theorem 5.2 and Corollary 5.4 of \cite{acpaper}.

\begin{corollary} If $L$ is almost classical, then $\wbar{G}_L \cong G_L * \ZZ$ and $\De^0_L(s,t)=0.$
\end{corollary}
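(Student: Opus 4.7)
The plan is to establish the two assertions in sequence, using results developed in Section \ref{S4}. First, I would prove the group isomorphism $\wbar{G}_L \cong G_L * \ZZ$ by combining Theorem \ref{thm:ACsplit} with Proposition \ref{prop_gp}. Then, using this free-product structure, I would compute the zeroth elementary ideal of the Alexander module of $\wbar{G}_L$ and show that it vanishes, which by \cite[Theorem 4.1]{acpaper} will yield the vanishing of the generalized Alexander polynomial.

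For the first claim, Theorem \ref{thm:ACsplit} gives, for $L$ almost classical, a semi-welded equivalence $\Zh(L) \sim L \sqcup \om$ in which $\om$ is unknotted and unlinked from $L$. In such a split diagram there are no classical crossings between the arcs of $L$ and the single arc of $\om$, so the Wirtinger relations of $G_{L \sqcup \om}$ decompose as the Wirtinger relations of $G_L$ together with no relations on the arc of $\om$. Consequently $G_{L \sqcup \om} \cong G_L * \ZZ$, where the free cyclic factor is generated by the meridian of $\om$. Combined with Proposition \ref{prop_gp}, this yields $\wbar{G}_L \cong G_{\zh(L)} \cong G_L * \ZZ$.

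For the second claim, I would fix a Wirtinger presentation $\langle x_1, \ldots, x_n \mid r_1, \ldots, r_n \rangle$ of $G_L$ and pass to the presentation $\langle x_1, \ldots, x_n, v \mid r_1, \ldots, r_n \rangle$ of $\wbar{G}_L \cong G_L * \ZZ$ in which $v$ does not appear in any relation. Applying Fox calculus and abelianizing over $\ZZ[s^{\pm 1}, t_1^{\pm 1}, \ldots, t_\mu^{\pm 1}]$, the resulting Alexander matrix has size $n \times (n+1)$ with an identically zero column corresponding to $v$. The zeroth elementary ideal $\wbar{\cE}_0$, which is generated by $(n+1) \times (n+1)$ minors of this matrix, must vanish, since the matrix has only $n$ rows and no such minors exist. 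Then $\wbar{H}_L(s,t)$, defined as the generator of the smallest principal ideal containing $\wbar{\cE}_0$, vanishes as well, and \cite[Theorem 4.1]{acpaper} gives $\De^0_L(s,t) = 0$.

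The main point requiring care is the justification for using this alternative, non-Wirtinger presentation of $\wbar{G}_L$ in the elementary ideal computation, since $\wbar{\cE}_0$ is originally defined via the presentation of Definition \ref{defn-red-virt-gp}. This should follow from the presentation-invariance of Fitting ideals, i.e.\ the elementary ideals are intrinsic invariants of the Alexander module $\wbar{G}_L'/\wbar{G}_L''$, so any presentation of the group yields the same ideals via its Fox Jacobian. Once this is acknowledged, the argument reduces to the elementary observation that a presentation matrix with more columns than rows has no top-order minors.
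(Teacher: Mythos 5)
Your first claim, the isomorphism $\wbar{G}_L \cong G_L * \ZZ$, is proved exactly as the paper does: combine Theorem \ref{thm:ACsplit} (giving $\Zh(L)\sim L\sqcup\om$ with $\om$ unknotted and split) with Proposition \ref{prop_gp}.

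For the vanishing of $\De^0_L$, the paper simply defers to \cite{acpaper}, so your explicit presentation-theoretic derivation is a genuine addition, and your overall strategy (produce a presentation of $\wbar{G}_L$ with strictly more generators than relations and invoke the presentation-invariance of elementary ideals) is the right idea. However, there is a likely off-by-one error in the indexing. The paper states that ``the isomorphism of Alexander modules identifies $\bar{\cE}_k$ with $\cE_{k+1}$''; since, for a Wirtinger presentation, $\cE_0$ is the ideal generated by the full determinant of the (square) Fox Jacobian, and this is always $0$, the nontrivial ideal is $\cE_1$. Translated to your presentation $\langle x_1,\ldots,x_n,v\mid r_1,\ldots,r_n\rangle$ (with $n+1$ generators, $n$ relations), the paper's $\bar{\cE}_0$ corresponds to the ideal of $n\times n$ minors of the $n\times(n+1)$ Jacobian, not the $(n+1)\times(n+1)$ minors. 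Your stated reason for vanishing (``no such minors exist'') is then not available. Fortunately, the conclusion still holds: every $n\times n$ minor either contains the identically zero $v$-column (and hence vanishes) or equals $\det J$, where $J$ is the $n\times n$ Fox Jacobian of the Wirtinger presentation of $G_L$; the latter vanishes by the fundamental Fox calculus identity $\sum_j(\partial r/\partial x_j)^\al(\al(x_j)-1)=0$, which forces a linear dependence among the columns. So you should replace the count-of-minors step with the observation that the zero $v$-column together with the singularity of the Wirtinger Jacobian forces all $n\times n$ minors to vanish. With that repair, your argument is correct and provides a clean concrete account of the vanishing that the paper only sketches by reference.
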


In general, for virtual links $L$ the isomorphism $\Phi \co \wbar{G}_L \to G_{\zh(L)}$ of Proposition \ref{prop_gp} commutes with the abelianization maps $\wbar{G}_L \to \ZZ^{\mu +1}$ and  $G_{\zh(L)} \to \ZZ^{\mu +1}.$
Consequently, the Alexander invariants of $\wbar{G}_{L}$ and $G_{\zh(L)}$ are isomorphic.
In particular, for a virtual knot $K$, the generalized Alexander polynomial of $K$ vanishes if and only if the first elementary ideal $\cE_1$ of $G_{\, \zh(K)}$ vanishes. (Recall from \S \ref{subsec:sketch} the degree shift  associated with the $\Zh$-map.)


\subsection{Concordance and the Tube map}  \label{S4-2}
In \cite{Satoh}, Satoh defined the Tube map, which associates a ribbon torus link $\Tube(L)$ in $S^4$ to
any virtual link $L$. In \cite[\S 3.1.1.]{Bar-Natan-Dancso-2016}, readers will find a purely topological description of the Tube map for framed welded links. More details about the Tube map can also be found in the article \cite{Audoux-2016}.

\begin{figure}[htb]
\begin{tabular}{cc} \begin{tabular}{c}
\def\svgwidth{1.5in}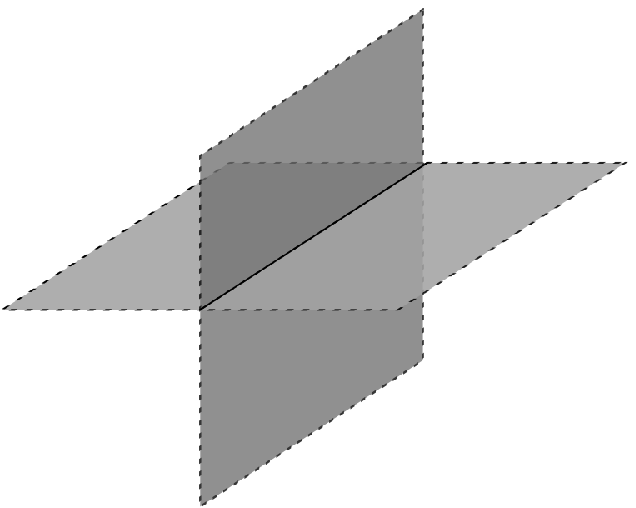 \end{tabular} & \begin{tabular}{c}
\def\svgwidth{1.85in}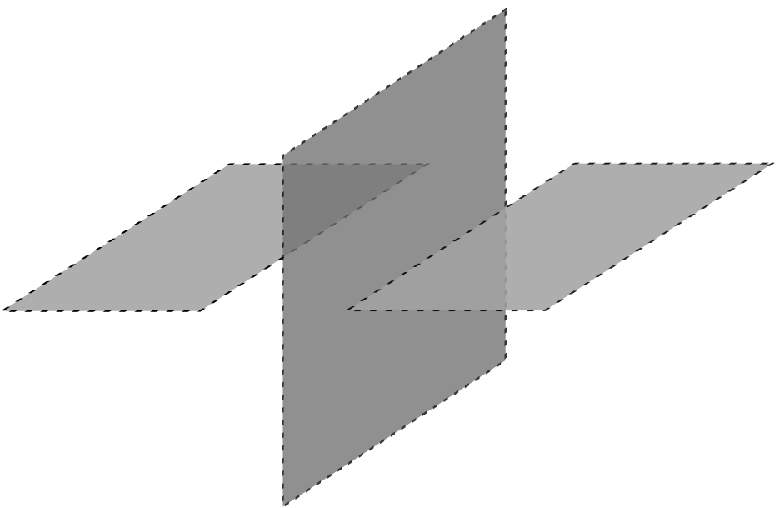 \end{tabular}  \end{tabular}
\caption{Local picture of a double line of the immersed surface (left) and a broken surface diagram (right).} \label{fig_broken}
\end{figure}

We describe the Tube map using broken surface diagrams, and to that end we introduce some terminology. 
A \emph{knotted surface} is an embedding of a surface into $\RR^4$; and it is called \emph{ribbon} if it bounds an immersed handlebody in $\RR^4$ with only ribbon singularities (see \cite[Definition 2.4]{Carter-Kamada-Saito-2004}).
Every knotted surface can be represented as a \emph{broken surface diagram}, and vice versa \cite[Proposition 1.21]{Carter-Kamada-Saito-2004}. A broken surface diagram depicts the generic surface obtained by taking the image of the surface under  $p\colon \RR^4 \to \RR^3$, where $p$ is projection onto the first three coordinates. On a double line, the generic surface is drawn as broken along the sheet with smaller $x_4$ coordinate (see Figure \ref{fig_broken}).  Two knotted surfaces are equivalent if they are ambient isotopic in $S^4$, and two broken surface diagrams represent equivalent knotted surfaces if and only if they are related by ambient isotopy in $\RR^3$ and a finite sequence of \emph{Roseman moves} (see \cite[Theorem 1.37]{Carter-Kamada-Saito-2004}). 

\begin{figure}[htb]
\begin{tabular}{cc} 
$$\xymatrix{\begin{tabular}{c}\psfig{file=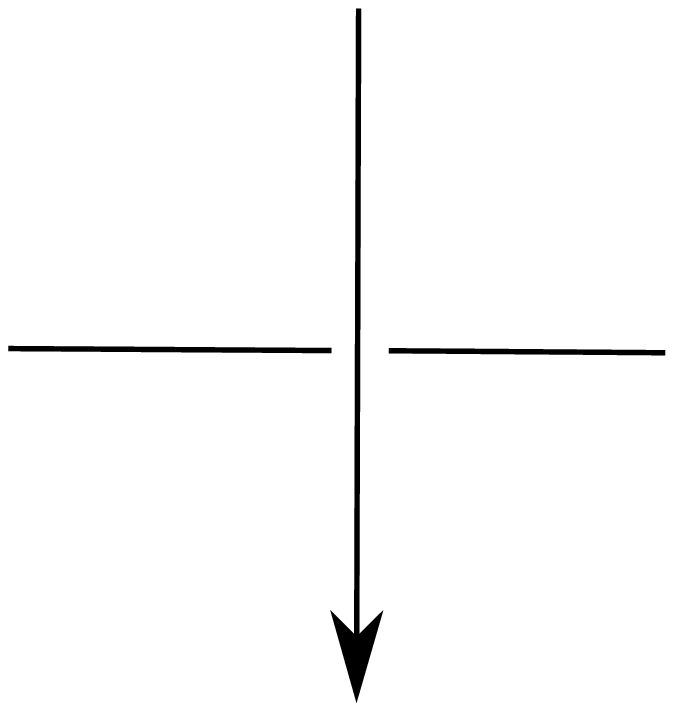,width=0.8in}  \end{tabular} \ar[r]^{\hspace{-.15in}\text{Tube}} &  \begin{tabular}{c}\psfig{file=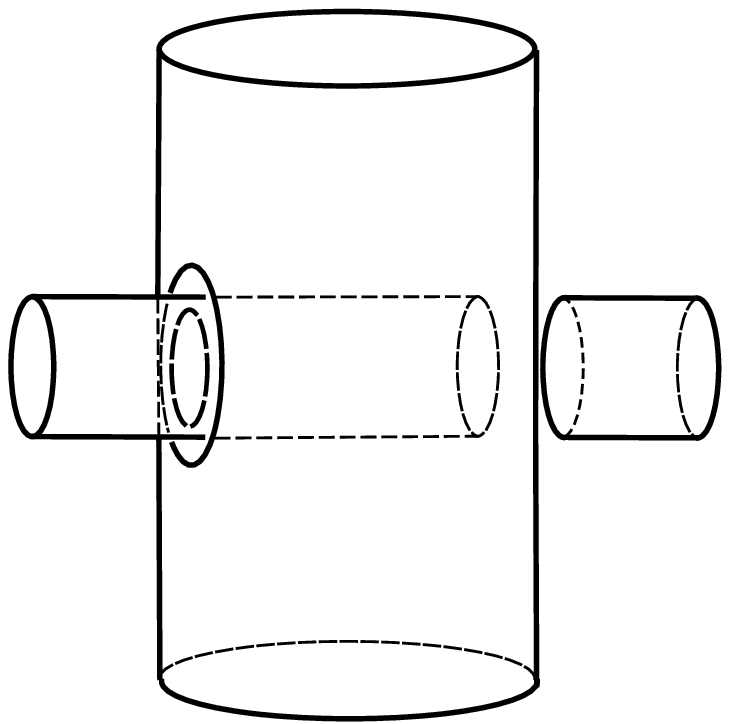,width=1.0in} \end{tabular}}$$ \qquad&
$$\xymatrix{
\begin{tabular}{c}\psfig{file=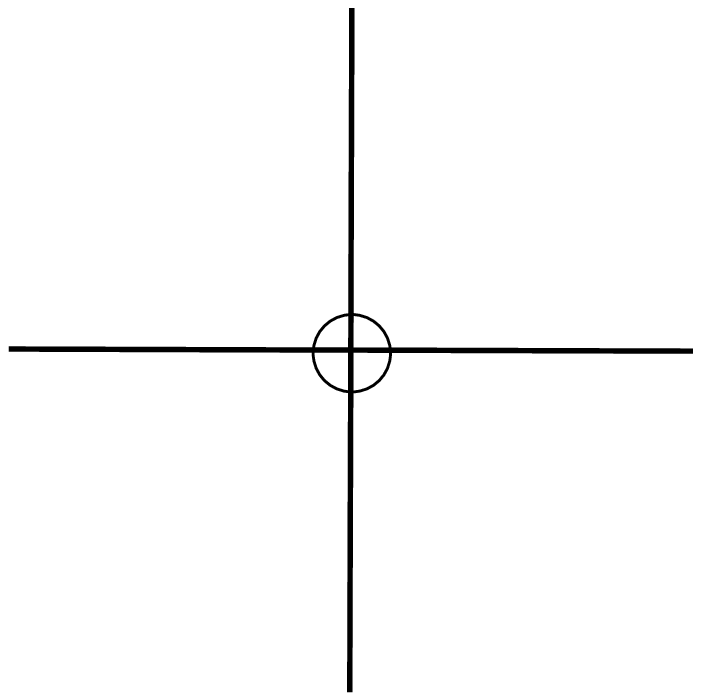,width=0.8in} \end{tabular} \ar[r]^{\hspace{-.15in}\text{Tube}} & \begin{tabular}{c}\psfig{file=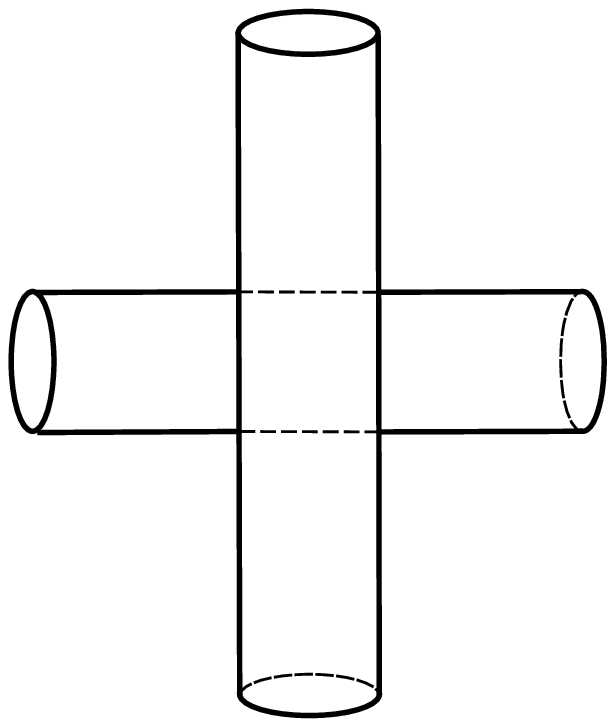,width=1.0in} \end{tabular}}$$
\end{tabular}
\caption{The Tube map.} \label{fig_tube}
\end{figure}

Satoh's Tube map can be described as a broken surface diagram as follows. Each arc is inflated to an annulus or ``tube''.  
At classical crossings, the tube of the overcrossing arc goes inside the tube of the undercrossing arc (see Figure \ref{fig_tube} left), and at virtual crossings, the tubes pass over each other without intersecting (see Figure \ref{fig_tube} right). In \cite{Satoh}, Satoh proved that the equivalence class of $\Tube(L)$ depends only on  $L$ up to welded equivalence.
He further showed that under the correspondence $L \mapsto \Tube(L)$, the knot group $G_L$ and $\pi_1(S^4 \sm \Tube(L))$ are isomorphic. 
 
A \emph{concordance} between two torus links $T_0$ and $T_1$ in $S^4$ is a smooth proper embedding $W$ in
$S^4 \times I$ with finitely many components, each diffeomorphic to $(S^1 \times S^1) \times I,$ 
and such that $W \cap (S^4 \times \{i\}) = T_i$ for $i=0,1.$ A torus link is \emph{slice} if it is concordant to the trivial unknotted unlinked torus link. Note that if two torus links are concordant, then
they have the same number of components.

\begin{proposition} \label{prop_tubes_conc} 
If two virtual links $L_0,L_1$ are welded concordant, then $\Tube(L_0)$ and $\Tube(L_1)$ are concordant ribbon torus links.
\end{proposition}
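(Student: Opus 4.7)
The plan is to decompose an arbitrary welded concordance from $L_0$ to $L_1$ into a finite sequence of elementary diagrammatic moves, apply the Tube construction locally at each move, and verify that the resulting local modifications assemble into a concordance of ribbon torus links in $S^4 \times I$ whose components are each diffeomorphic to $T^2 \times I$.

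By Definition \ref{defn:welded-concordance}, the welded concordance is built from generalized Reidemeister moves, forbidden overpass moves, and births, deaths, and saddles satisfying $b_i + d_i = s_i$ on each component. Satoh's invariance result handles the first two types: any generalized Reidemeister or forbidden overpass move induces an ambient isotopy of the ribbon torus link $\Tube(L)$ in $S^4$, and in the cobordism this contributes only a product cylinder (an isotopy slice) in $S^4 \times I$.

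For births, deaths, and saddles, I would work locally using broken surface diagrams in the sense of Carter--Kamada--Saito. A birth introduces a trivial unknotted circle in a planar disk; its tube is a small unknotted unlinked torus that bounds a solid torus $S^1 \times D^2$ in $S^4$, and extending into the fourth coordinate yields a local $0$-handle (minimum) attached to $\Tube(L)$. A death produces the dual $2$-handle (maximum), capping off a trivial torus with a solid torus. A saddle locally cuts and rejoins two parallel arcs within a disk disjoint from all crossings; under Tube, the two annular tubes associated to these arcs are joined by a $1$-handle of the form $D^1 \times D^2 \times S^1$, producing either a connected sum or a splitting of tube surfaces. Since each birth, death, and saddle occurs in a planar region free of crossings, these local handle additions embed smoothly into $S^4 \times I$ without disturbing the rest of the broken surface diagram.

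Assembling the pieces, the resulting 3-dimensional submanifold $W \subset S^4 \times I$ has boundary $-\Tube(L_0) \sqcup \Tube(L_1)$. The $i$-th component of the underlying 2-dimensional welded cobordism has Euler characteristic $b_i + d_i - s_i = 0$ and hence is an annulus $S^1 \times I$; its image under the Tube map is the total space of the trivial $S^1$-bundle over $S^1 \times I$, namely $T^2 \times I$. The main obstacle is verifying that the local tube model for a saddle can be inserted smoothly into the ambient 4-space even when the two arcs being joined interact with surrounding crossings at different levels; this is precisely where the forbidden overpass move, together with Roseman moves on broken surface diagrams, provides the flexibility needed to isotope the 1-handle into standard position without creating spurious double points.
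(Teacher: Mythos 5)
Your proposal follows essentially the same strategy as the paper: decompose the welded concordance into a sequence of elementary moves, note that welded equivalences contribute product cylinders via ambient isotopy, realize births, deaths, and saddles as round-handle attachments to the tube, observe that the resulting cobordism is an $S^1$-bundle over the underlying $2$-dimensional welded cobordism, and use $b_i+d_i-s_i=0$ to conclude that surface is a disjoint union of annuli, hence the cobordism is a union of $T^2\times I$'s. This is the right idea and the right organization.

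Two points need tightening. First, a dimensional slip: the cobordism between $\Tube(L_0)$ and $\Tube(L_1)$ is $3$-dimensional, so the round $1$-handle for a saddle should be $S^1 \times D^1 \times D^1$ (i.e. $S^1 \times h^2_1$, a circle times a $2$-dimensional $1$-handle), not $D^1 \times D^2 \times S^1$, which is $4$-dimensional. Second, and more substantively, the step you flag as the ``main obstacle'' --- smoothly inserting the saddle handle into $S^4\times I$ --- is exactly where your proposal is vaguest, and the appeal to Roseman moves and the forbidden overpass does not really address it: the forbidden overpass is a welded move already absorbed into Satoh's isotopy invariance, and Roseman moves govern ambient isotopies of broken surface diagrams in $\RR^3$, not the construction of the $5$-dimensional cobordism. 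The paper instead resolves this explicitly by decomposing the round handle $S^1\times h^2_k$ into two genuine $3$-dimensional handles $h^1_0\times h^2_k$ and $h^1_1\times h^2_k$ attached to $\Tube(J_i)\times[i,i+1]$ (following the round-handle discussion in Gompf--Stipsicz), and identifying the attaching region directly on the broken surface diagram. Replacing your hand-waving with this explicit handle decomposition is what completes the argument.
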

\begin{proof} Since $L_0$ and $L_1$ are welded concordant, there is a sequence of virtual link diagrams $L_0=J_0,J_1,\ldots,J_n=L_1$ such that for $0 \le i \le n-1$, $J_{i+1}$ is obtained from $J_i$ by either a welded equivalence, a birth, a death, or a saddle. We construct a cobordism $W_i \subset S^4 \times [i,i+1]$ between $\Tube(J_i)$ and $\Tube(J_{i+1})$ for $0 \le i \le n-1$. Then $W=\bigcup_{i=0}^{n-1} W_i \subset S^4 \times [0,n]$ is a cobordism between $\Tube(L_0)$ and $\Tube(L_1)$. Each $W_i$ will be diffeomorphic to $S^1 \times F_i$ where $F_i$ is a surface and $\partial (S^1 \times F_i)=S^1 \times \partial F_i=\Tube(J_{i+1}) \sqcup \Tube(-J_i)$. This implies that $W=S^1 \times \bigcup_{i=0}^{n-1} F_i$. The condition on the Euler characteristic of the welded concordance implies that $F=\bigcup_{i=0}^{n-1} F_i$ is diffeomorphic to a disjoint union of annuli. Hence, $W$ is in fact a concordance between $\Tube(L_0)$ and $\Tube(L_1)$. Consider each type of pair $J_i,J_{i+1}$.

Suppose that $J_{i+1}$ is obtained from $J_i$ by a welded equivalence. Then there is an ambient isotopy $H \colon S^4 \times [0,1] \to S^4$ taking $\Tube(J_i)$ to $\Tube(J_{i+1})$. For $0 \le t \le 1$, consider $H(\Tube(J_i),t)$ as an embedding into $S^4 \times \{i+t\}$ and define $W_i=H(\Tube(J_i),[0,1])\subset S^4 \times [i,i+1]$. Then $W_i$ is diffeomorphic to $\Tube(J_i) \times I$, which is itself a product $S^1 \times F_i$, where $F_i$ is a disjoint union of annuli.

Now consider a birth, death or saddle. In $2$-dimensions, a birth is realized by the addition of a $0$-handle $h_0^2$, a death is realized by the addition of a $2$-handle $h_2^2$, and a saddle is realized by the addition of a $1$-handle $h_1^2$. We claim that $\Tube(J_{i+1})$ can be obtained from $\Tube(J_i)$ by attaching round handles $S^1 \times h_k^2$ to $\Tube(J_i) \times [i,i+1]$ for some $k=0,1,2$. Since $S^1$ has a $1$-dimensional handle decomposition $S^1=h_0^1 \cup h_1^1$, this can be accomplished by appropriately attaching the $3$-dimensional handles $h_0^1 \times h_k^2$ and $h_1^1 \times h_k^2$ (see e.g. \cite[Example 4.6.8]{Gompf-Stipsicz-1999}).

\begin{figure}[t]
\begin{tabular}{ccc} 
\begin{tabular}{c}\def\svgwidth{1in}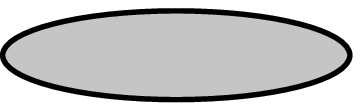 \end{tabular} & \begin{tabular}{c}\def\svgwidth{1.1in}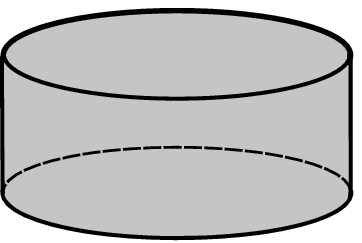 \end{tabular} & 
\begin{tabular}{c}\def\svgwidth{2in}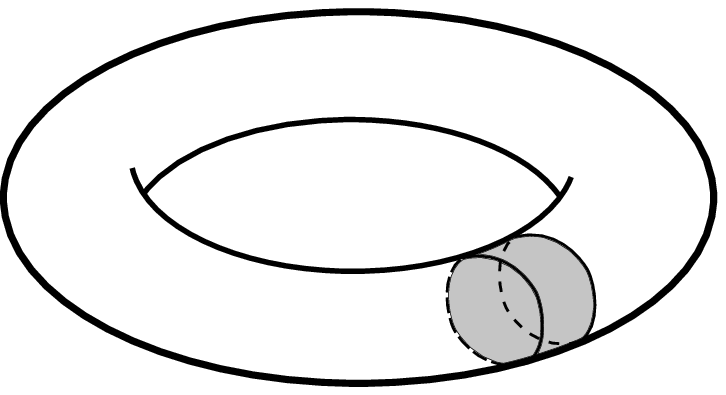\end{tabular} 
\end{tabular}
\caption{A birth is the addition of a $0$-handle $h_0^2$ (left). A round $1$-handle $S^1 \times h_0^1$ is  obtained by attaching a $1$-handle $h_1^1 \times h_0^2$ (right) to a $0$-handle $h_0^1 \times h_0^2$ (center).} \label{fig_zero_handle_attach}
\end{figure}

The attachment for a birth is depicted in Figure \ref{fig_zero_handle_attach}. Note that $(h_0^1 \times h_0^2)\cup (h_1^1 \times h_0^2)$ is a solid torus. Also, $\Tube(J_{i+1})$ is obtained from the broken surface diagram $\Tube(J_i)$ by adding the boundary of an unknotted solid torus. In this case, we may set $W_i=\Tube(J_i)\times [i,i+1] \sqcup S^1 \times h_0^2$. Then $W_i$ is diffeomorphic to $S^1 \times Z \sqcup S^1 \times h_0^2$, where $Z$ is a disjoint union of annuli, and we set $F_i=Z \sqcup h_0^2$. This establishes the claim for a birth. The construction for a death move follows by duality. 

\begin{figure}[htb]
\begin{tabular}{cc} 
\begin{tabular}{c}\def\svgwidth{2.25in}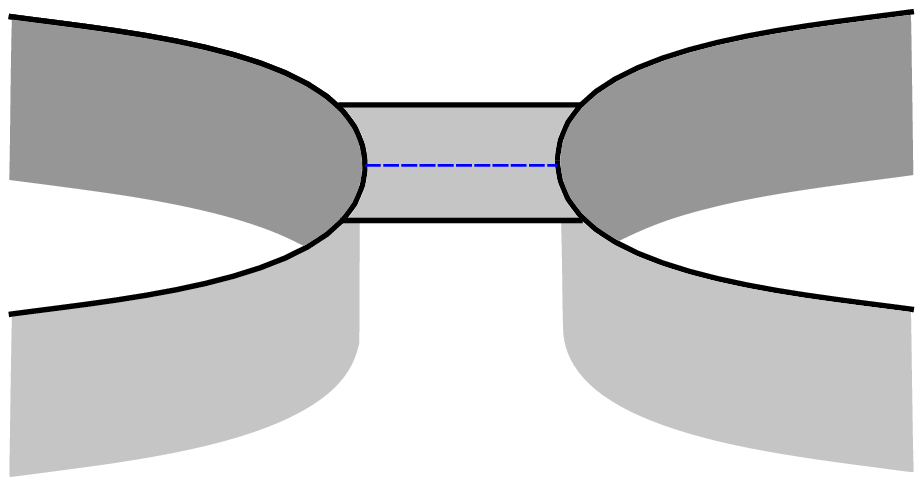 \end{tabular} & \begin{tabular}{c}\def\svgwidth{2.5in}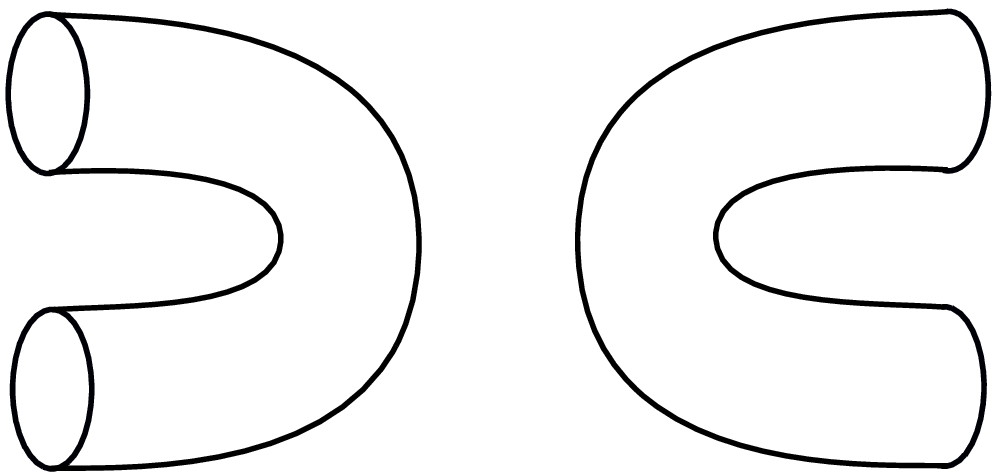 \end{tabular} \\ 
\begin{tabular}{c}\def\svgwidth{2.5in}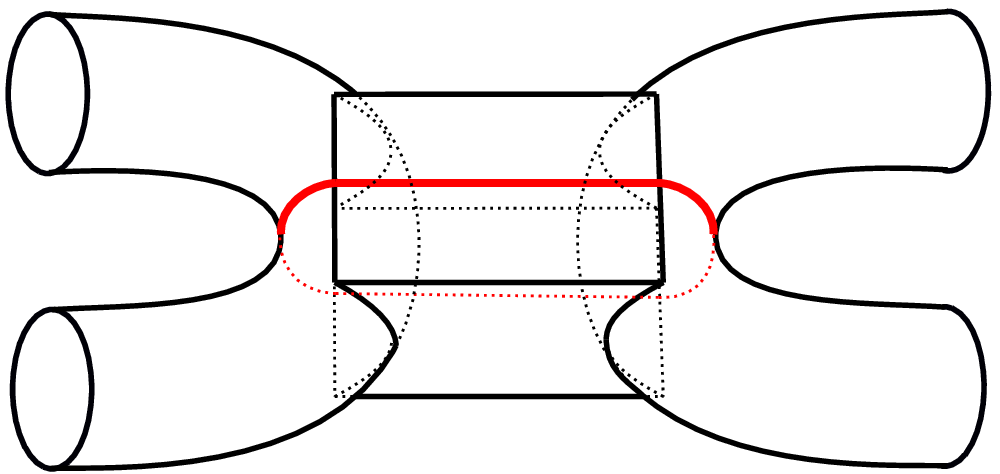 \end{tabular} &
\begin{tabular}{c}\def\svgwidth{2.5in}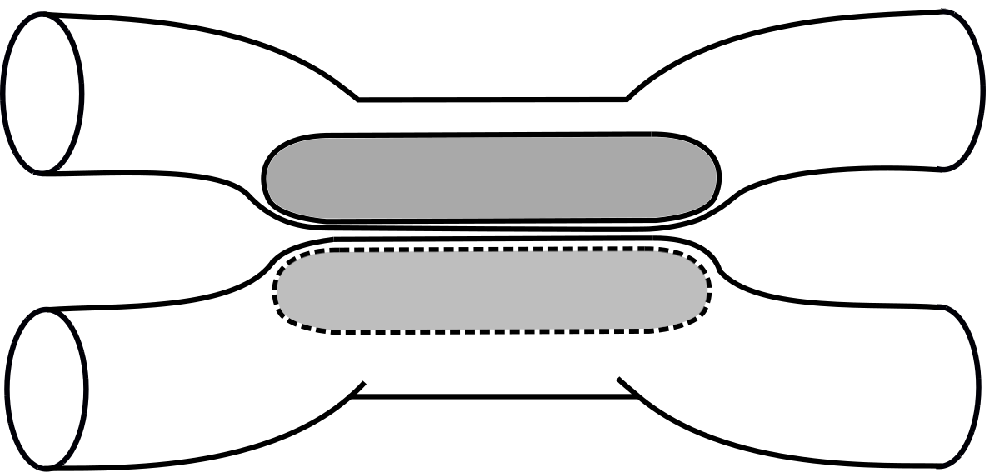 \end{tabular} 
\end{tabular}
\caption{A saddle is the addition of $1$-handle $h_1^2$ (top left). The round 1-handle $S^1 \times h_1^2$ is obtained by attaching a $2$-handle $h_1^1 \times h_1^2$ along the red curve (bottom left) to a $1$-handle $h_0^1 \times h_1^2$.} \label{fig_one_handle_attach}
\end{figure}

The attachment for a saddle is depicted in Figure \ref{fig_one_handle_attach}. Let $Z$ be the disjoint union of annuli satisfying $\Tube(J_i) \times [i,i+1]=S^1 \times Z$. Figure \ref{fig_one_handle_attach}, top left, shows the effect of adding $h_1^2$ to the boundary of $Z$ in $S^4 \times \{i+1\}$. The blue dotted line is the core $C$ of $h_1^2$. Figure \ref{fig_one_handle_attach}, bottom left, shows the effect of adding the $1$-handle $h_0^1 \times h_1^2$ to $S^1 \times \partial Z$. The attaching region for the $2$-handle $h_1^1 \times h_1^2$, drawn in red, is the union of $(S^1\sm\Int(h_0^1)) \times \partial C $ and $(\partial h_0^1) \times C$. The effect of attaching the $2$-handle is to perform a surgery, as in the bottom right of Figure \ref{fig_one_handle_attach}. It follows that $\partial (S^1 \times Z \cup S^1 \times h_1^2)=\Tube(J_{i+1}) \sqcup \Tube(-J_i)$. Set $F_i = Z \cup h_1^2$ and $W_i=S^1 \times F_i$.  
\end{proof}


\subsection{Main result} \label{subsec:main}
The following theorem gives a statement of our main result.

\begin{theorem} \label{thm_main}
Let $K \subset \Si \times I$ be a knot in a thickened surface. If $K$ is virtually concordant to a homologically trivial knot, then its generalized Alexander polynomial vanishes.
\end{theorem}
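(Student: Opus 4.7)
The plan is to execute the five-step strategy outlined in Subsection \ref{subsec:sketch}, chaining together the $\Zh$-construction, Satoh's Tube map, Stallings' theorem, and Hillman's criterion. Let $K' \subset \Sigma' \times I$ be a homologically trivial knot that is virtually concordant to $K$. Since $K'$ is a single knot, being homologically trivial implies it bounds a Seifert surface in $\Sigma' \times I$, so it represents a virtual boundary knot (and in particular is almost classical).

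First I would apply the $\Zh$-construction. By Corollary \ref{cor_conc}, the semi-welded links $\Zh(K)$ and $\Zh(K')$ are concordant, and by Theorem \ref{thm:ACsplit}, the link $\Zh(K')$ is semi-welded equivalent to the split link $K' \sqcup \om$, which is a 2-component virtual boundary link. Since semi-welded concordance implies welded concordance, Proposition \ref{prop_tubes_conc} ensures that the ribbon torus links $T := \Tube(\Zh(K))$ and $T' := \Tube(\Zh(K'))$ are concordant in $S^4$. Applying Stallings' Theorem \ref{stallings} to the inclusions of the complements $S^4 \sm T$ and $S^4 \sm T'$ into the exterior of the concordance in $S^4 \times I$ (where one uses Alexander duality in $S^4$ to verify the $H_1$-isomorphism and $H_2$-surjection hypotheses), combined with Satoh's identification $G_L \cong \pi_1(S^4 \sm \Tube(L))$, yields
\[
 G_{\! \zh(K)} / (G_{\! \zh(K)})_n \;\cong\; G_{\! \zh(K')} / (G_{\! \zh(K')})_n \quad \text{for every } n \geq 1.
\]

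Since $\Zh(K')$ is a 2-component virtual boundary link, Theorem \ref{thm-boundary-link-long} gives $G_{\! \zh(K')}/(G_{\! \zh(K')})_n \cong F(2)/F(2)_n$, hence the same holds for $\Zh(K)$. Proposition \ref{prop-nil-quot} then forces every longitude of $\Zh(K)$ to lie in the nilpotent residual $(G_{\! \zh(K)})_\om$, and in particular in $G_{\! \zh(K)}(\infty) = \bigcap_n (G_{\! \zh(K)})_n \, G_{\! \zh(K)}''$. Hillman's Theorem \ref{hillman}, applied to the 2-component welded link $\Zh(K)$, therefore gives $\cE_1(G_{\! \zh(K)}) = 0$. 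Via the isomorphism $G_{\! \zh(K)} \cong \wbar{G}_K$ of Proposition \ref{prop_gp} and the degree shift identifying $\cE_1$ of $G_{\! \zh(K)}$ with $\bar{\cE}_0$ of $\wbar{G}_K$ (Subsection \ref{subsec:sketch}), we conclude $\bar{\cE}_0(\wbar{G}_K) = 0$. By the remark at the end of \S\ref{S4-1}, this is equivalent to the vanishing of the generalized Alexander polynomial $\De^0_K(s,t)$.

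The main obstacle I anticipate is the Stallings step: verifying that, for a concordance between ribbon torus links $T, T' \subset S^4$, the inclusions of the link complements into the concordance exterior in $S^4 \times I$ induce isomorphisms on $H_1$ and surjections on $H_2$. This is standard lore for classical link concordance in $S^3$, but in the $S^4$ ribbon-torus setting one must carry out the Alexander-duality / Mayer--Vietoris calculation carefully, and also check that the isomorphism of nilpotent quotients obtained by composing maps from both sides through the exterior really induces the claimed isomorphism. A secondary technical matter is being sure that the normalization conventions for the reduced and the generalized Alexander invariants are consistent, so that the degree-shift identification $\bar{\cE}_0 \leftrightarrow \cE_1$ indeed forces $\De^0_K(s,t) = 0$ rather than merely $\De^0_K(s,t) \in st \cdot \ZZ[s^{\pm 1}, t^{\pm 1}]$.
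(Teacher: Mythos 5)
Your proposal is correct and follows the paper's argument essentially step for step; the paper merely packages the middle portion into two intermediate lemmas (Theorems \ref{thm-nil-quot} and \ref{theorem_welded}) that you unwind inline. The Stallings step you flag as the ``main obstacle'' is handled in the paper more cleanly than you propose: rather than applying the group-theoretic Theorem \ref{stallings} to the inclusion of the complements into the concordance exterior and verifying the $H_1$/$H_2$ hypotheses via Alexander duality, the paper cites Theorem \ref{thm_stall} (Stallings' theorem that $I$-equivalent compact submanifolds of $S^n$ have complements with isomorphic nilpotent quotients), which already packages that computation, and observes only that concordance of ribbon torus links is in particular an $I$-equivalence.
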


Restated in terms of virtual knots, we get the following result.
\begin{corollary} \label{cor_main}
If $K$ is a virtual knot that is concordant to an almost classical knot, then $\De^0_K(s,t) =0$. In particular,  
if $K$ is a slice virtual knot, then $\De^0_K(s,t) =0$.
\end{corollary}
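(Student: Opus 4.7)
The plan is to execute the outline in \S\ref{subsec:sketch} by shepherding $K$ through the $\Zh$-construction and then through Satoh's Tube map into the setting of ribbon torus links in $S^4$, where Stallings's theorem provides concordance invariance of nilpotent quotients. First, set $L = \Zh(K)$, a two-component semi-welded link, and invoke Proposition \ref{prop_gp} to identify $\wbar{G}_K$ with $G_L$. Since this isomorphism commutes with the abelianization maps, it sends the zeroth elementary ideal of the reduced virtual Alexander module to the first elementary ideal $\cE_1$ of the classical Alexander module of $G_L$; the index shift reflects the extra meridional generator coming from the $\om$-component. As remarked at the end of \S\ref{S4-1}, this reduces the theorem to proving $\cE_1 = 0$.

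By hypothesis, $K$ is virtually concordant to a homologically trivial knot $K'$, equivalently an almost classical knot, i.e., a virtual boundary knot. Corollary \ref{cor_conc} produces a semi-welded (hence welded) concordance from $L = \Zh(K)$ to $L' = \Zh(K')$, and Proposition \ref{prop_tubes_conc} upgrades this to a concordance $W \subset S^4 \times I$ between the ribbon torus links $T = \Tube(L)$ and $T' = \Tube(L')$. Using Satoh's identifications $G_L \cong \pi_1(S^4 \sm T)$ and $G_{L'} \cong \pi_1(S^4 \sm T')$, the inclusions of these complements into $(S^4 \times I) \sm W$ induce isomorphisms on $H_1$ and surjections on $H_2$ (a Mayer--Vietoris/Alexander-duality computation exploiting that each component of $W$ is an annulus). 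Applying Stallings's Theorem \ref{stallings} twice then yields $G_L/(G_L)_n \cong G_{L'}/(G_{L'})_n$ for every $n$.

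Since $K'$ is almost classical, Theorem \ref{thm:ACsplit} presents $L'$ as a virtual boundary link, and Theorem \ref{thm-boundary-link-long} identifies the nilpotent quotients of $G_{L'}$ with those of $F(2)$. Combining with the previous step gives $G_L/(G_L)_n \cong F(2)/F(2)_n$ for all $n$, so Proposition \ref{prop-nil-quot} forces every longitude of $L$ to lie in $(G_L)_\om$, and a fortiori in $G_L(\infty) = \bigcap_n (G_L)_n (G_L)''$. Hillman's Theorem \ref{hillman} applied to the two-component link group $G_L$ then gives $\cE_{\mu - 1} = \cE_1 = 0$, and combined with the first paragraph this yields $\De^0_K(s,t) = 0$, as required.

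The main obstacle lies in the Stallings step: one must verify that the inclusions of the two boundary link complements into the concordance complement really do induce an isomorphism on $H_1$ and a surjection on $H_2$. Once this homological input is in place, the proof is essentially bookkeeping, assembling in the correct order the functoriality of $\Zh$ and $\Tube$ under concordance (Corollary \ref{cor_conc} and Proposition \ref{prop_tubes_conc}), the group identifications (Proposition \ref{prop_gp} and Satoh's theorem), the boundary-link results (Theorems \ref{thm:ACsplit} and \ref{thm-boundary-link-long}), the Chen--Milnor criterion (Proposition \ref{prop-nil-quot}), and Hillman's theorem.
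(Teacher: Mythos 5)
Your argument is essentially the paper's own proof, which routes Corollary~\ref{cor_main} through Theorem~\ref{thm_main}, Theorem~\ref{theorem_welded}, and Theorem~\ref{thm-nil-quot} in precisely the order and with the same lemmas you assemble. The one place where your write-up diverges is the Stallings step, which you flag as ``the main obstacle'': the paper sidesteps any direct verification that the inclusions of the complements into the concordance complement induce isomorphisms on $H_1$ and surjections on $H_2$ by invoking Theorem~\ref{thm_stall} (Stallings's Theorem~5.2 on $I$-equivalent compact submanifolds of $S^n$) rather than the abstract group-theoretic Theorem~\ref{stallings}; the Alexander-duality computation you gesture at is exactly the content of Stallings's proof of~5.2, so it need not be reproduced. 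One small imprecision worth fixing: the components of the concordance $W$ are thickened tori $T^2\times I$, not annuli (it is $F$, with $W\cong S^1\times F$, that is a disjoint union of annuli in the proof of Proposition~\ref{prop_tubes_conc}).
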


\begin{remark}
Theorem  \ref{thm_main} and Corollary \ref{cor_main} hold for all equivalent forms of the generalized Alexander polynomial, including the Sawollek polynomial $Z_K(x,y)$ \cite{saw}, the Kauffman-Radford polynomial $G_K(s,t)$ \cite{Kauffman-Radford}, Manturov's \emph{VA} polynomial \cite{Manturov2002, vktsoa},  the Silver-Williams polynomial $\Delta_0(K)(u,v)$ \cite{Silver-Williams-2003}, and the virtual Alexander polynomial $H_K(s,t,q)$ \cite{Boden-Dies-Gaudreau-2015}.
\end{remark}

A sketch of the proof was given in \S \ref{subsec:sketch}, and below is a detailed argument.
 
Suppose $T_0$ and $T_1$ are two torus links in $S^4$. An \emph{$I$-equivalence} between $T_0$ and $T_1$ is a topological embedding $Z$ in 
$S^4 \times I$ with finitely many components, each homeomorphic to $(S^1 \times S^1) \times I,$ 
and such that $Z \cap S^4 \times \{i\} = T_i$ for $i=0,1.$ Note that if torus links are $I$-equivalent, then they have the same number of components.
Note further that $I$-equivalences are not assumed to be locally flat.

The next result is Theorem 5.2 from \cite{stallings_central}, which
shows that $I$-equivalent compact submanifolds of $S^n$ have isomorphic nilpotent quotients. 

\begin{theorem}[Stallings] \label{thm_stall} 
Let $X$ and $Y$ be $I$-equivalent compact submanifolds of the $n$-sphere $S^n$. Let $A$ and $B$ denote the fundamental groups of $S^n\sm X$ and $S^n\sm Y$, respectively. Then for all $k$, $A/A_k \cong B/B_k$ are isomorphic.
\end{theorem}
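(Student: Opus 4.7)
The plan is to follow Stallings' original strategy: use the $I$-equivalence to construct a bridging space between $S^n \smallsetminus X$ and $S^n \smallsetminus Y$, then invoke Theorem \ref{stallings} twice. Concretely, let $W = (S^n \times I) \smallsetminus Z$ where $Z$ is the given $I$-equivalence, and set $G = \pi_1(W)$. The end inclusions $\iota_X \co S^n \smallsetminus X \hookrightarrow W$ and $\iota_Y \co S^n \smallsetminus Y \hookrightarrow W$ induce homomorphisms $(\iota_X)_* \co A \to G$ and $(\iota_Y)_* \co B \to G$, and the goal is to check that each satisfies the hypotheses of Theorem \ref{stallings}: an isomorphism on $H_1$ and a surjection on $H_2$.

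For the homological input, I would exploit the product structure of $Z$. Since each component of $Z$ is homeomorphic to a cylinder on its $t=0$ slice, with $Z \cap S^n \times \{0\} = X$ and $Z \cap S^n \times \{1\} = Y$, the obvious collapses give homotopy equivalences $Z \simeq X$ and $Z \simeq Y$. Alexander duality, carried out in \v{C}ech cohomology so as to handle the purely topological embedding, yields $\tilde H_i(S^n \smallsetminus X) \cong \check H^{n-i-1}(X)$, and an Alexander-Lefschetz duality argument applied to $Z \subset (S^n \times I, \partial)$ gives a parallel computation of $H_*(W)$ in terms of $\check H^*(Z)$. Naturality of these dualities with respect to the boundary inclusion identifies $(\iota_X)_*$ on homology with the restriction $\check H^*(Z) \to \check H^*(X)$, which is an isomorphism because $Z \simeq X$; the same holds for $(\iota_Y)_*$. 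A short diagram chase then delivers the desired isomorphism on $H_1$ and surjection on $H_2$.

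With the homological hypotheses in hand, Theorem \ref{stallings} applied to each of $\iota_X$ and $\iota_Y$ gives isomorphisms $A/A_k \xrightarrow{\cong} G/G_k \xleftarrow{\cong} B/B_k$ for every finite $k$. Composing one of these with the inverse of the other produces the claimed isomorphism $A/A_k \cong B/B_k$.

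The principal obstacle is the second step: carrying out the duality computation for $H_*(W)$ without a local-flatness hypothesis on $Z$. This forces one to work consistently in \v{C}ech cohomology and to track naturality of Alexander-Lefschetz duality under the inclusions of the boundary slices $S^n \times \{0,1\}$ of $S^n \times I$. Once those homological facts are established, the algebraic conclusion follows at once from Theorem \ref{stallings}.
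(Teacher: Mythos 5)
The paper does not prove this theorem; it is imported verbatim from Stallings \cite{stallings_central}, where it is Theorem~5.2, and your argument reproduces Stallings' own proof: form $W = (S^n \times I) \sm Z$, show via \v{C}ech--Alexander duality (using $Z \simeq X \simeq Y$ from the $I$-equivalence structure) that the end inclusions $S^n \sm X \hookrightarrow W \hookleftarrow S^n \sm Y$ are homology isomorphisms, and then apply Theorem~\ref{stallings} twice through $\pi_1(W)$. You correctly flag the one genuinely delicate point—the Lefschetz--Alexander duality for the compact, possibly non--locally-flat $Z \subset (S^n \times I,\partial)$ and its naturality with respect to the boundary slices—which is precisely where the \v{C}ech formulation is load-bearing, so the proposal is sound.
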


Recall that welded concordance of virtual links is given by Definition \ref{defn:welded-concordance}.

\begin{theorem} \label{thm-nil-quot}
If two virtual links $L,L'$ are welded concordant, then $G_{L}$ and $G_{L'}$ have isomorphic nilpotent quotients.
\end{theorem}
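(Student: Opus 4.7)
The plan is to transfer the problem from welded links in the plane to ribbon torus links in $S^4$, where Stallings' theorem applies directly.

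First, I would apply Proposition \ref{prop_tubes_conc} to the welded concordance between $L$ and $L'$. This produces a concordance $W \subset S^4 \times I$ between the ribbon torus links $\Tube(L)$ and $\Tube(L')$, where $W$ is a disjoint union of copies of $(S^1 \times S^1) \times I$ properly embedded in $S^4 \times I$ with $W \cap (S^4 \times \{0\}) = \Tube(L)$ and $W \cap (S^4 \times \{1\}) = \Tube(L')$. Since a smooth concordance is in particular a topological embedding of the required form, this $W$ is an $I$-equivalence between $\Tube(L)$ and $\Tube(L')$ in the sense of Stallings.

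Second, I would invoke Satoh's theorem, which asserts that for every virtual link $L$ the Tube map induces an isomorphism
\[
G_L \;\cong\; \pi_1\bigl(S^4 \sm \Tube(L)\bigr),
\]
and likewise for $L'$. This translates the question about the link groups $G_L$ and $G_{L'}$ into a question about the fundamental groups of the complements of compact submanifolds of $S^4$.

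Third, I would apply Theorem \ref{thm_stall} (Stallings) with $n = 4$, $X = \Tube(L)$, and $Y = \Tube(L')$. Since the concordance $W$ provides the needed $I$-equivalence, the conclusion is that
\[
\pi_1(S^4 \sm \Tube(L))\big/\pi_1(S^4 \sm \Tube(L))_k \;\cong\; \pi_1(S^4 \sm \Tube(L'))\big/\pi_1(S^4 \sm \Tube(L'))_k
\]
for every $k \geq 1$. Combining this with Satoh's isomorphism yields $G_L/(G_L)_k \cong G_{L'}/(G_{L'})_k$ for all $k$, as required.

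Since the heavy lifting (functoriality of $\Tube$ under concordance, Satoh's group isomorphism, and Stallings' theorem) has been assembled in advance, the main obstacle here is essentially bookkeeping: one has to check that the object built in Proposition \ref{prop_tubes_conc} genuinely satisfies the hypotheses of Stallings' theorem, in particular that its components are of the form $(S^1 \times S^1) \times I$ and that it is a topological embedding into $S^4 \times I$. Both are immediate from the construction in the proof of Proposition \ref{prop_tubes_conc}, so the argument is essentially a concatenation of the previously established results.
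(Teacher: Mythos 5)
Your argument matches the paper's proof essentially word for word: both apply Proposition \ref{prop_tubes_conc} to obtain a concordance of ribbon torus links, observe that this gives an $I$-equivalence, invoke Satoh's isomorphism $G_L \cong \pi_1(S^4 \sm \Tube(L))$, and conclude via Stallings' Theorem \ref{thm_stall}. The reasoning is correct and there is no meaningful difference in approach.
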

\begin{proof}  By Proposition \ref{prop_tubes_conc}, the ribbon torus links $T=\Tube(L)$ and $T'=\Tube(L')$ are concordant. For ribbon torus links, concordance implies $I$-equivalence, hence Theorem \ref{thm_stall} applies to show that the nilpotent quotients of $\pi_1(S^4\sm  T)$ and $\pi_1(S^4\sm  T')$ are isomorphic. Since the link groups satisfy $G_{L}\cong \pi_1(S^4\sm  T)$ and $G_{L'} \cong \pi_1(S^4\sm  T')$, it follows that
$G_L$ and $G_{L'}$ have isomorphic nilpotent quotients.
This completes the proof.
\end{proof}

\begin{theorem} \label{theorem_welded}
Suppose $L \subset \Si \times I$ is a link with $\mu$ components and link group $G_L$. If $L$ is welded concordant to a virtual boundary link, then $\cE_{\mu-1} = 0$.
\end{theorem}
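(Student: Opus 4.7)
The plan is to reduce the vanishing of $\cE_{\mu-1}$ to a statement about the location of longitudes in the derived-augmented lower central filtration, using the welded-concordance invariance of nilpotent quotients (Theorem \ref{thm-nil-quot}) together with Hillman's Theorem \ref{hillman}. Let $L' \subset \Si' \times I$ be a virtual boundary link that is welded concordant to $L$; since welded concordance preserves the number of components, $L'$ also has $\mu$ components. Proposition \ref{prop_epi} combined with Theorem \ref{thm-boundary-link-long} then gives $G_{L'}/(G_{L'})_n \cong F(\mu)/F(\mu)_n$ for every finite $n$, with the isomorphism induced by sending meridians to a free basis.

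Next, I would invoke Theorem \ref{thm-nil-quot} to transport this identification across the welded concordance: for each $n$, one has $G_L/(G_L)_n \cong G_{L'}/(G_{L'})_n \cong F(\mu)/F(\mu)_n$. This composite identification can be taken to be meridian-compatible, as follows. The meridians of $L$ Wirtinger-generate $G_L$, and hence also its nilpotent quotient $G_L/(G_L)_n$, so the natural map $F(\mu)/F(\mu)_n \twoheadrightarrow G_L/(G_L)_n$ is a surjection of finitely generated nilpotent groups; since the source is Hopfian and the two groups are abstractly isomorphic, this surjection must itself be an isomorphism, automatically sending the standard free generators to the meridians. With this meridian-preserving identification in place, Proposition \ref{prop-nil-quot} yields that every longitude of $L$ lies in the nilpotent residual $(G_L)_\om$.

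Finally, since $(G_L)_n \subseteq (G_L)_n G_L''$ for every $n$, intersecting over $n$ gives the inclusion $(G_L)_\om \subseteq G_L(\infty)$, so the longitudes of $L$ lie in $G_L(\infty)$. The virtual-link form of Hillman's Theorem \ref{hillman} --- specifically the implication $(3) \Rightarrow (1)$ --- then gives $\cE_{\mu-1} = 0$, completing the argument. The step I expect to be the most delicate is the meridian-compatibility in Theorem \ref{thm-nil-quot}, since Proposition \ref{prop-nil-quot} is sensitive to the meridional data rather than merely to an abstract isomorphism of groups. The Hopficity argument sketched above resolves this cleanly, but one could alternatively trace the isomorphism through the Tube construction and Stallings' $I$-equivalence theorem, verifying that the inclusions of $\Tube(L)$ and $\Tube(L')$ into the ambient $4$-dimensional concordance match meridians on both ends.
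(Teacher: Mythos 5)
Your proof is correct and follows essentially the same route as the paper: invoke Theorem \ref{thm-nil-quot} to transfer the nilpotent quotients of the virtual boundary link to $G_L$, conclude via Theorem \ref{thm-boundary-link-long} and Proposition \ref{prop-nil-quot} that the longitudes lie in $(G_L)_\om \subseteq G_L(\infty)$, and finish with the implication $(3)\Rightarrow(1)$ of Hillman's theorem. The Hopficity argument you spell out for meridian-compatibility is a sound way to address a point the paper leaves implicit, but it is really internal to the proof of Proposition \ref{prop-nil-quot} (whose "only if" direction already requires promoting an abstract isomorphism $G_L/(G_L)_n \cong F(\mu)/F(\mu)_n$ to the statement that the Chen--Milnor surjection is an isomorphism), so it does not constitute a different approach.
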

\begin{proof} Theorem \ref{thm-nil-quot} tells us that the nilpotent quotients of $G_L$ are invariants of concordance of welded links. Therefore, if $L$ is welded concordant to a virtual boundary link, then by Theorem \ref{thm-boundary-link-long} and Proposition \ref{prop-nil-quot}, its longitudes must lie in $(G_L)_\om$. Since $(G_L)_\om  \lhd G_L(\infty),$
Theorem \ref{hillman} then applies to show that $\cE_{\mu-1} = 0$.
\end{proof}

\noindent{\it Proof of Theorem \ref{thm_main}.}
Given a knot $K$ in $\Si \times I$ which is virtually concordant to a homologically trivial knot, then it represents 
a virtual knot. By abuse of notation, we use $K$ to denote the resulting virtual knot, which by hypothesis is concordant to an almost classical knot. Set $L = \Zh(K),$ then $L$ is a welded link with $\mu = 2$ components. 
Corollary \ref{cor_conc} implies that $L$ is welded concordant to a virtual boundary link, and Theorem \ref{theorem_welded} applies to show that its first elementary ideal vanishes, i.e., $\cE_1 =0$.
However, Proposition \ref{prop_gp} tells us that the link group $G_L$ is isomorphic to the reduced virtual knot group $\wbar{G}_K$, and thus it follows that $K$ has vanishing generalized Alexander polynomial. \qed

This proof  extends to give the following result for virtual boundary links.
\begin{proposition}
Let $L$ be a virtual link with $\mu$ components. If $L$ is concordant to a virtual boundary link, then the elementary ideals $\wbar{\cE}_\ell$ of the reduced virtual Alexander invariant vanish for $0 \leq \ell \leq \mu-1.$
\end{proposition}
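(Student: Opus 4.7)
The plan is to adapt the argument of Theorem \ref{thm_main} and track what happens to all of the low-order elementary ideals, rather than only the zeroth one. First I would form the semi-welded link $\Zh(L)$, which has $\mu + 1$ components. By hypothesis, $L$ is virtually concordant to a virtual boundary link $L'$, and Corollary \ref{cor_conc} tells us that $\Zh$ sends this concordance to a (semi-welded, hence welded) concordance from $\Zh(L)$ to $\Zh(L')$. Since $L'$ is in particular almost classical, Theorem \ref{thm:ACsplit} applies and shows that $\Zh(L')$ is semi-welded equivalent to the split link $L' \sqcup \om$. Because $\om$ is unknotted and split from $L'$, it bounds a disk disjoint from the pairwise disjoint Seifert surfaces for the components of $L'$, so $L' \sqcup \om$ is itself a virtual boundary link with $\mu + 1$ components. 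Chaining these together, $\Zh(L)$ is welded concordant to a virtual boundary link with $\mu + 1$ components.

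Next, I would apply Theorem \ref{theorem_welded} to the welded link $\Zh(L)$. Since it has $\mu + 1$ components, the theorem yields $\cE_{(\mu+1)-1} = \cE_\mu = 0$ for the elementary ideals of the link group $G_{\! \zh(L)}$. By the standard nesting $\cE_0 \subseteq \cE_1 \subseteq \cdots$ of elementary ideals, this forces $\cE_k = 0$ for all $0 \leq k \leq \mu$.

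To translate this vanishing statement into the reduced virtual Alexander invariant, I would invoke Proposition \ref{prop_gp}, which gives an isomorphism $\wbar{G}_L \cong G_{\!\zh(L)}$. This isomorphism is compatible with the abelianization maps to $\ZZ^{\mu+1}$, and so induces an isomorphism of Alexander modules. As noted in the sketch in \S\ref{subsec:sketch}, this correspondence carries a degree shift: $\wbar{\cE}_\ell$ of the reduced virtual Alexander invariant $\wbar{G}_L' / \wbar{G}_L''$ corresponds to $\cE_{\ell+1}$ of $G_{\!\zh(L)}$ (the shift reflecting the one extra meridional generator from the $\om$-component). Therefore $\wbar{\cE}_\ell = \cE_{\ell+1} = 0$ for $0 \leq \ell \leq \mu - 1$, which is the desired conclusion.

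The main obstacle, as in Theorem \ref{thm_main}, is keeping track of the bookkeeping between the three equivalence relations (virtual concordance of $L$, welded concordance of $\Zh(L)$, concordance of $\Tube(\Zh(L))$) and between the two indexings of elementary ideals. The genuinely substantive ingredients — functoriality of $\Zh$ under concordance, the splitting Theorem \ref{thm:ACsplit} that promotes $\Zh$ of a boundary link to a boundary link, Theorem \ref{theorem_welded} for welded concordance to a boundary link, and the isomorphism $\wbar{G}_L \cong G_{\!\zh(L)}$ — are all already established in the excerpt, so the remaining work is to combine them and invoke the nesting of elementary ideals together with the degree shift.
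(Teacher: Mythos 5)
Your proof is correct and matches the approach the paper intends --- the paper only remarks that the proof of Theorem \ref{thm_main} ``extends to give'' this proposition without writing out the details, and your argument supplies exactly those details: apply $\Zh$, use Corollary \ref{cor_conc} for functoriality under concordance, invoke Theorem \ref{thm:ACsplit} to see that $\Zh$ of a virtual boundary link is again a virtual boundary link (now with $\mu+1$ components), apply Theorem \ref{theorem_welded} to get $\cE_\mu = 0$, use the nesting of elementary ideals, and translate via Proposition \ref{prop_gp} and the degree shift $\wbar{\cE}_\ell = \cE_{\ell+1}$. No gaps.
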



\subsection{Applications}  \label{subsec:applications}
Up to mirror symmetry and orientation reversal, there are 92,800 virtual knots with six or fewer crossings \cite{Green}, and the papers  \cite{Boden-Chrisman-Gaudreau-2017} and
\cite{Rushworth-2019} take up the problem of determining sliceness and the slice genus for these virtual knots.
We describe progress on this problem obtained from Corollary \ref{cor_main}.

With such a large number of virtual knots, the first step is to identify the virtual knots that are possibly slice. Turaev's graded genus $\vartheta(K)$ is defined in \cite{Turaev-2008}, and in
\cite{Boden-Chrisman-Gaudreau-2017} it is shown to be an effective slice obstruction for virtual knots. Indeed, applying $\vartheta(K)=0$ as a sieve reduces the set of potentially slice virtual knots from 92,800 to 1551
(see Table \ref{updated-table}). In combination with the graded genus, the generalized Alexander polynomial eliminates another 194 virtual knots from consideration. Given that 1295 of the resulting virtual knots are known to be slice \cite{Boden-Chrisman-Gaudreau-2017}, this reveals that the combined sieve predicts sliceness for low-crossing virtual knots with 95.3\% accuracy! 

There are a number of other useful concordance invariants of virtual knots, including the Rasmussen invariant derived from Khovanov/Lee homology theories \cite{DKK, Rushworth-2019} and the signature invariants  of almost classical knots \cite{Boden-Chrisman-Gaudreau-2017a}. These invariants not only obstruct sliceness but also provide valuable information on the slice genus. For instance, the signature invariants  were applied to the problem of computing the slice genus of
almost classical knots up to six crossings in \cite{Boden-Chrisman-Gaudreau-2017a}.
Since the generalized Alexander polynomial vanishes on almost classical knots, it is not useful for addressing questions about sliceness for almost classical knots.

\begin{table}[h]  
\begin{tabular}{|c|c|c|c|}
\hline
 Virtual &  & Generalized   & Rasmussen    \\
knot &   Gauss code & Alexander polynomial  & invariant  \\
\hline \hline
4.12 & \small{\tt O1-O2-U1-O3+U2-O4+U3+U4+}& $(1-t)(1-s)(t - s)(1- st)^2$ &$s = 0$ \\ \hline  
5.93  & \small{\tt O1-O2-U1-U2-U3+O4+O3+U5+U4+O5+}&  $-1(1-t)(1-s)(1-st)^3$ &$s = 0$\\ \hline
5.114  & \small{\tt O1-O2-U1-U2-U3+U4-O3+U5+O4-O5+} & $0$ & $s=-2$ \\ \hline
5.212 & \small{\tt O1-O2-U1-O3-U2-O4+U5+U3-O5+U4+}&  $(1-t) (1-s) (1-st)^3$ & $s = 0$ \\ \hline
5.344  & \small{\tt O1-O2+U1-O3-U2+U4+O5+O4+U5+U3-}&  $-(1-s^2)(1-t)^2 (1-st)^2$ & $s = 2$ \\ \hline
5.919  & \small{\tt O1-O2-U1-O3+U4+U2-O5+U3+O4+U5+} &  $1-t)(1-s)(1-st)^3$& $s = 0$ \\ \hline
5.1034  & \small{\tt O1-O2+U1-O3-U4+U3-O5-U2+O4+U5-}&  $-(1-t)  (1-s)  (1-st)^3$ & $s = 0$ \\ \hline
5.1216 & \small{\tt O1-O2+U1-O3-U4+O5-O4+U2+U5-U3-}  &  $0$ & $s = 0$ \\ \hline
5.1963 & \small{\tt O1-O2-O3-U1-U2-U4+O5+U3-O4+U5+}&  $0$ & $s = 0$ \\ \hline
5.2351 & \small{\tt O1-O2-U3+O4+U1-U2-O5-U4+O3+U5-}&  $-(1-t)  (1-s)  (1-st)^3$ & $s = -2$ \\ \hline
5.2430  & \small{\tt O1-U2-O3+U1-O2-U4-O5+U3+O4-U5+}&  $-(1-t^2)(1-s^2)(1-st)^3$ & $s = 0$ \\ \hline
5.2435 & \small{\tt O1-U2-O3-U1-O4+U3-O5+U4+O2-U5+}&  $-(1-t^2) (1-s^2) (1-st)^3$ & $s = 0$ \\ \hline
\end{tabular}
\bigskip
\caption{The generalized Alexander polynomial and Rasmussen invariant. } \label{gen-Alex-table}
\end{table}

The slice genus was determined for all virtual knots with four or fewer crossings with one exception in \cite{Boden-Chrisman-Gaudreau-2017}. In addition, sliceness was determined for all but 11 virtual knots with 5 crossings. These virtual knots are listed in Table \ref{gen-Alex-table},
along with their generalized Alexander polynomials and Rasmussen invariants. The Rasmussen invariants were computed by Will Rushworth in \cite{Rushworth-2019}, and the generalized Alexander polynomials were computed by Lindsay White. Taken together, these computations show non-sliceness for all but two of the twelve virtual knots in Table \ref{gen-Alex-table}.

Thus Corollary \ref{cor_main} implies that the virtual knot 4.12 is not slice, and the results of
\cite{Boden-Chrisman-Gaudreau-2017}
apply to show that it has slice genus $g_s(K)=1$. This
completes the calculation of the slice genus for virtual knots with up to 4 crossings.

For virtual knots with 5 crossings, based on the results of \cite[Table 1]{Boden-Chrisman-Gaudreau-2017} and \cite{Boden-Chrisman-Gaudreau-2017t}, it follows that $5.93, 5.212,$ $5.344,$ $5.919,$ and $5.1034$ all have slice genus $g_s=1$, and that $5.2351, 5.2430,$ and $5.2435,$ have slice genus $g_s = 1$ or 2. There are just two virtual knots with five crossings whose slice status remains unknown, namely $5.1216$ and $5.1963.$
 
According to \cite{Boden-Chrisman-Gaudreau-2017}, there are  236 virtual knots with 6 crossings whose slice status is unknown. Using the signature invariants introduced in \cite{Boden-Chrisman-Gaudreau-2017a}, it follows that the 10 almost classical knots among these are not slice.
Another 39 of these knots were shown to have nontrivial Rasmussen invariants by Rushworth \cite{Rushworth-2019}, therefore none of them are slice either. Of the remaining 187 virtual knots with 6 crossings, exactly 134 have nontrivial generalized Alexander polynomial, and so Corollary \ref{cor_main} implies that none of them are slice. In Example \ref{ex:slice} below, we will show how to slice four of these 6-crossing knots, and that leaves just 39 virtual knots with 6 crossings whose slice status remains unknown. 

\begin{table}[h]  
\begin{tabular}{|c|c|c|c|c|c|c|}
\hline
 Crossing & Virtual & $\vartheta=0$ & combined & slice & status     \\
number & knots &  sieve & sieve & knots &  unknown  \\
\hline \hline
2 & 1 & 0 & 0 &  0& 0\\ \hline
3 & 7 & 1 & 0 &  0& 0\\ \hline
4 & 108 & 15 &  14& 13& 0\\ \hline
5 & 2448 & 59 & 51& 45 & 2\\ \hline
6 & 90235 & 1476 &  1294& 1241 & 39\\ \hline
\end{tabular}
\bigskip
\caption{Virtual knots by crossing number, $\vartheta=0$ sieve, combined $\vartheta=0$ and $\De^0=0$ sieve, slice knots, and status unknown.} \label{updated-table}
\end{table}

\begin{example}  \label{ex:slice}
Among the virtual knots of unknown slice status are 6.33048, 6.33049, 6.33504, 6.33508. Each of them has
vanishing graded genus and generalized Alexander polynomial. Further, each is a connected sum of two virtual knots, one with two crossings and the other with four. We will show that both of these summand virtual knots are slice and conclude the original virtual knot is slice.

\begin{figure}[h]
\centering
\includegraphics[scale=0.75]{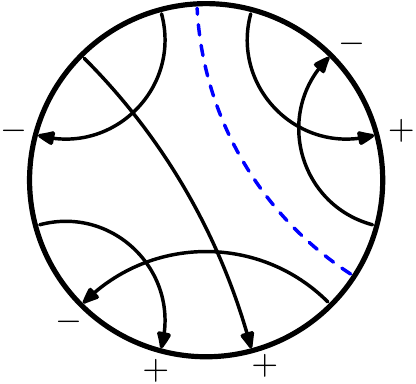} \quad  
\includegraphics[scale=0.75]{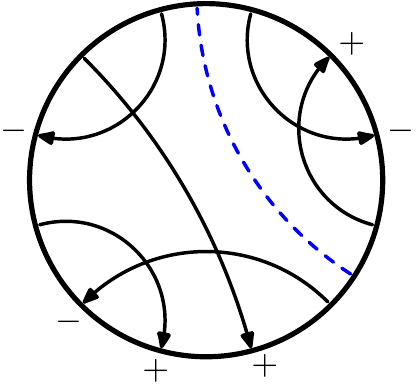} \quad  
\includegraphics[scale=0.75]{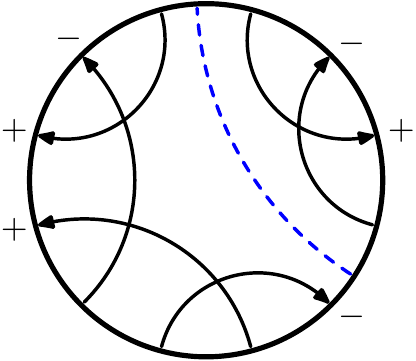} \quad  
\includegraphics[scale=0.75]{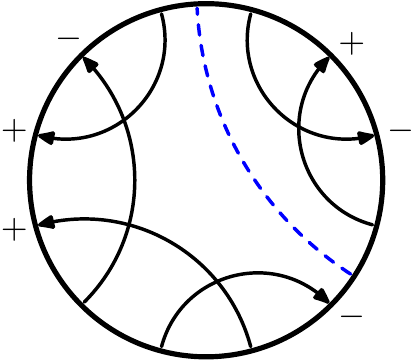}  
 \caption{Gauss diagrams of $6.33048, 6.33049, 6.33504,$ and $6.33508$, viewed as connected sums.}\label{fig:GD-6}
\end{figure} 

The component knot with two crossings is a virtual unknot diagram, therefore it is slice. The other component is a virtual knot $K$ with  four crossings, and it has Kauffman bracket polynomial $\langle K \rangle =1$ and graded genus $\vartheta(K)=0$.  Therefore, Theorem 5.1 of \cite{Boden-Chrisman-Gaudreau-2017} applies to show that it is also slice, and this implies that the original 6-crossing virtual knot is slice.
\end{example}

As a final application, we show that the writhe polynomial is a concordance invariant for virtual knots, giving an alternative proof of \cite[Theorem 3.3]{Boden-Chrisman-Gaudreau-2017}. Recall that Mellor proved that
the generalized Alexander polynomial dominates the writhe polynomial. 
Specifically, Theorem 1 of \cite{Mellor-2016} asserts that 
\begin{eqnarray*}
W_K(t) &=& -(\De^0_K)'(t^{-1},t),
\end{eqnarray*} 
where $(\De_K^0)'(s,t)={\De_K^0(s,t)}/{(1-s t)}$. (Recall that for virtual knots, $\De_K^0(s,t)$ is divisible by $(1-st)$ 
 by \cite[Proposition 4.1]{Silver-Williams-2003}.)  Corollary \ref{cor_main} therefore implies that the writhe polynomial $W_K(t)$ vanishes on slice virtual knots. 
Since the writhe polynomial is known to be additive under connected sum (see \cite[Proposition 4.9]{Cheng-2014}), it follows
that $W_{K}(t)=W_{K'}(t)$ whenever $K, K'$ are concordant virtual knots.
This argument also shows that the Henrich-Turaev polynomial and the odd-writhe polynomial are concordance invariants for virtual knots, since both are determined by the writhe polynomial.
  

\subsection{Concluding remarks} \label{subsec:conclusion}
Bar-Natan's $\Zh$-construction gives a new interpretation of the generalized Alexander invariant, and it provides a new topological model for the virtual Alexander invariant. 
The association $K \mapsto \Tube(\Zh(K))$ that was used to prove Theorem \ref{thm_main} suggests that techniques from classical link theory can be profitably used to inform the study of virtual knots.
Our main result used only welded invariance of $\Zh(K)$, but there is quite possibly extra information that can be obtained by regarding $\Zh(K)$ as a semi-welded knotted object \cite{Dror}.
It would be interesting to develop a general theory of $(n,m)$ semi-welded knotted objects (links, braids, string links, etc.).

The $\Zh$-map provides a new perspective on the generalized Alexander polynomial, and it may lead to progress on several open problems. One is to develop a better understanding of the behavior of the generalized Alexander polynomial under cut-and-paste operations like connected sum and cabling. Another is the realization problem, which asks which polynomials $\De(s,t) \in \ZZ[s^{\pm 1}, t^{\pm }]$  occur as the generalized Alexander polynomial for some virtual knot (or link). 
A more ambitious problem is to construct a knot homology theory for virtual knots that categorifies the generalized Alexander polynomial. It would be interesting to apply the $\Zh$-construction to these and other problems in virtual knot theory.

Theorem \ref{thm-nil-quot} suggests a potentially new definition of the Milnor $\bar{\mu}$-invariants for virtual links, one for which they are invariant under welded concordance (cf. \cite{Casson-1975}). In future work, the second-named author will combine this approach with the $\Zh$-map to derive new invariants of virtual knot concordance from the lower central series of $\wbar{G}_K$.

\subsection*{Acknowledgements} 
The authors wish to express their gratitude to Dror Bar-Natan, who conceived of the $\Zh$-construction and generously shared it with us. We are also thankful to Heather Dye, Robin Gaudreau, Andrew Nicas, Will Rushworth, Robb Todd and Lindsay White for their valuable input and feedback.  

\begin{bibdiv}
\begin{biblist}

\bib{Audoux-2016}{incollection}{
      author={Audoux, Benjamin},
       title={On the welded tube map},
        date={2016},
   booktitle={Knot theory and its applications},
      series={Contemp. Math.},
      volume={670},
   publisher={Amer. Math. Soc., Providence, RI},
       pages={261\ndash 284},
      review={\MR{3568529}},
}

\bib{Boden-Chrisman-Gaudreau-2017a}{misc}{
      author={Boden, Hans~U.},
      author={Chrisman, Micah},
      author={Gaudreau, Robin},
       title={Signature and concordance of virtual knots},
        date={2017},
         url={https://arxiv.org/pdf/1708.08090},
        note={math.GT 1708.08090v2, to appear in Indiana Univ. Math. J.},
}

\bib{Boden-Chrisman-Gaudreau-2017t}{misc}{
      author={Boden, Hans~U.},
      author={Chrisman, Micah},
      author={Gaudreau, Robin},
       title={Virtual slice genus tables},
        date={2017},
         url={https://micah46.wixsite.com/micahknots/slicegenus},
  note={\href{https://micah46.wixsite.com/micahknots/slicegenus}{micah46.wixsite.com/micahknots/slicegenus}},
}

\bib{Boden-Chrisman-Gaudreau-2017}{article}{
      author={Boden, Hans~U.},
      author={Chrisman, Micah},
      author={Gaudreau, Robin},
       title={Virtual {K}not {C}obordism and {B}ounding the {S}lice {G}enus},
        date={2019},
        ISSN={1058-6458},
     journal={Exp. Math.},
      volume={28},
      number={4},
       pages={475\ndash 491},
         url={https://doi.org/10.1080/10586458.2017.1422160},
      review={\MR{4032961}},
}

\bib{Boden-Dies-Gaudreau-2015}{article}{
      author={Boden, Hans~U.},
      author={Dies, Emily},
      author={Gaudreau, Anne~Isabel},
      author={Gerlings, Adam},
      author={Harper, Eric},
      author={Nicas, Andrew~J.},
       title={Alexander invariants for virtual knots},
        date={2015},
        ISSN={0218-2165},
     journal={J. Knot Theory Ramifications},
      volume={24},
      number={3},
       pages={1550009, 62},
         url={http://dx.doi.org/10.1142/S0218216515500091},
      review={\MR{3342135}},
}

\bib{acpaper}{article}{
      author={Boden, Hans~U.},
      author={Gaudreau, Robin},
      author={Harper, Eric},
      author={Nicas, Andrew~J.},
      author={White, Lindsay},
       title={Virtual knot groups and almost classical knots},
        date={2017},
        ISSN={0016-2736},
     journal={Fund. Math.},
      volume={238},
      number={2},
       pages={101\ndash 142},
         url={https://doi.org/10.4064/fm80-9-2016},
      review={\MR{3640614}},
}

\bib{Dror}{article}{
      author={Bar-Natan, Dror},
       title={Crossing the crossings},
        date={2015},
     journal={\url{http://drorbn.net/AcademicPensieve/2015-11/xtx/xtx.pdf}},
}

\bib{Bar-Natan-Dancso-2016}{article}{
      author={Bar-Natan, Dror},
      author={Dancso, Zsuzsanna},
       title={Finite-type invariants of w-knotted objects, {I}: w-knots and the
  {A}lexander polynomial},
        date={2016},
        ISSN={1472-2747},
     journal={Algebr. Geom. Topol.},
      volume={16},
      number={2},
       pages={1063\ndash 1133},
         url={https://doi.org/10.2140/agt.2016.16.1063},
      review={\MR{3493416}},
}

\bib{Burde-Zieschang}{book}{
      author={Burde, Gerhard},
      author={Zieschang, Heiner},
       title={Knots},
     edition={Second},
      series={de Gruyter Studies in Mathematics},
   publisher={Walter de Gruyter \& Co.},
     address={Berlin},
        date={2003},
      volume={5},
        ISBN={3-11-017005-1},
      review={\MR{1959408 (2003m:57005)}},
}

\bib{Casson-1975}{article}{
      author={Casson, Andrew~J.},
       title={Link cobordism and {M}ilnor's invariant},
        date={1975},
        ISSN={0024-6093},
     journal={Bull. London Math. Soc.},
      volume={7},
       pages={39\ndash 40},
         url={https://doi.org/10.1112/blms/7.1.39},
      review={\MR{0362286}},
}

\bib{Cheng-2014}{article}{
      author={Cheng, Zhiyun},
       title={A polynomial invariant of virtual knots},
        date={2014},
        ISSN={0002-9939},
     journal={Proc. Amer. Math. Soc.},
      volume={142},
      number={2},
       pages={713\ndash 725},
         url={http://dx.doi.org/10.1090/S0002-9939-2013-11785-5},
      review={\MR{3134011}},
}

\bib{Crans-Henrich-Nelson}{article}{
      author={Crans, Alissa~S.},
      author={Henrich, Allison},
      author={Nelson, Sam},
       title={Polynomial knot and link invariants from the virtual biquandle},
        date={2013},
        ISSN={0218-2165},
     journal={J. Knot Theory Ramifications},
      volume={22},
      number={4},
       pages={134004, 15},
         url={https://doi.org/10.1142/S021821651340004X},
      review={\MR{3055555}},
}

\bib{Carter-Kamada-Saito}{article}{
      author={Carter, J.~Scott},
      author={Kamada, Seiichi},
      author={Saito, Masahico},
       title={Stable equivalence of knots on surfaces and virtual knot
  cobordisms},
        date={2002},
        ISSN={0218-2165},
     journal={J. Knot Theory Ramifications},
      volume={11},
      number={3},
       pages={311\ndash 322},
         url={http://dx.doi.org/10.1142/S0218216502001639},
        note={Knots 2000 Korea, Vol. 1 (Yongpyong)},
      review={\MR{1905687 (2003f:57011)}},
}

\bib{Carter-Kamada-Saito-2004}{book}{
      author={Carter, Scott},
      author={Kamada, Seiichi},
      author={Saito, Masahico},
       title={Surfaces in 4-space},
      series={Encyclopaedia of Mathematical Sciences},
   publisher={Springer-Verlag, Berlin},
        date={2004},
      volume={142},
        ISBN={3-540-21040-7},
         url={http://dx.doi.org/10.1007/978-3-662-10162-9},
        note={Low-Dimensional Topology, III},
      review={\MR{2060067}},
}

\bib{dye_kauffman}{article}{
      author={Dye, Heather~A.},
      author={Kauffman, Louis~H.},
       title={Virtual homotopy},
        date={2010},
        ISSN={0218-2165},
     journal={J. Knot Theory Ramifications},
      volume={19},
      number={7},
       pages={935\ndash 960},
         url={https://doi.org/10.1142/S0218216510008200},
      review={\MR{2673693}},
}

\bib{DKK}{article}{
      author={Dye, Heather~A.},
      author={Kaestner, Aaron},
      author={Kauffman, Louis~H.},
       title={Khovanov homology, {L}ee homology and a {R}asmussen invariant for
  virtual knots},
        date={2017},
        ISSN={0218-2165},
     journal={J. Knot Theory Ramifications},
      volume={26},
      number={3},
       pages={1741001, 57},
         url={https://doi.org/10.1142/S0218216517410012},
      review={\MR{3627701}},
}

\bib{Gaudreau-2019}{article}{
      author={{Gaudreau}, Robin},
       title={{Classification of virtual string links up to cobordism}},
        date={2019-02},
     journal={ArXiv e-prints},
      eprint={1902.09008},
}

\bib{Green}{article}{
      author={Green, Jeremy},
       title={A table of virtual knots},
        date={2004},
     journal={\url{http://www.math.toronto.edu/drorbn/Students/GreenJ}},
}

\bib{Gompf-Stipsicz-1999}{book}{
      author={Gompf, Robert~E.},
      author={Stipsicz, Andr\'{a}s~I.},
       title={{$4$}-manifolds and {K}irby calculus},
      series={Graduate Studies in Mathematics},
   publisher={American Mathematical Society, Providence, RI},
        date={1999},
      volume={20},
        ISBN={0-8218-0994-6},
         url={https://doi.org/10.1090/gsm/020},
      review={\MR{1707327}},
}

\bib{Gutierrez-1972}{article}{
      author={Guti\'{e}rrez, Mauricio~A.},
       title={Boundary links and an unlinking theorem},
        date={1972},
        ISSN={0002-9947},
     journal={Trans. Amer. Math. Soc.},
      volume={171},
       pages={491\ndash 499},
         url={https://doi.org/10.2307/1996392},
      review={\MR{0310902}},
}

\bib{Hillman-2012}{book}{
      author={Hillman, Jonathan},
       title={Algebraic invariants of links},
     edition={Second},
      series={Series on Knots and Everything},
   publisher={World Scientific Publishing Co. Pte. Ltd., Hackensack, NJ},
        date={2012},
      volume={52},
        ISBN={978-981-4407-38-0; 981-4407-38-0},
         url={https://doi.org/10.1142/8493},
      review={\MR{2931688}},
}

\bib{hill}{article}{
      author={Hillman, Jonathan~A.},
       title={Alexander ideals and {C}hen groups},
        date={1978},
        ISSN={0024-6093},
     journal={Bull. London Math. Soc.},
      volume={10},
      number={1},
       pages={105\ndash 110},
         url={https://doi.org/10.1112/blms/10.1.105},
      review={\MR{0478132}},
}

\bib{JKS-1994}{article}{
      author={Jaeger, Fran\c{c}ois},
      author={Kauffman, Louis~H.},
      author={Saleur, Hubert},
       title={The {C}onway polynomial in {${\bf R}^3$} and in thickened
  surfaces: a new determinant formulation},
        date={1994},
        ISSN={0095-8956},
     journal={J. Combin. Theory Ser. B},
      volume={61},
      number={2},
       pages={237\ndash 259},
  url={https://doi-org.libaccess.lib.mcmaster.ca/10.1006/jctb.1994.1047},
      review={\MR{1280610}},
}

\bib{Kauffman-2015}{incollection}{
      author={Kauffman, Louis~H.},
       title={Virtual knot cobordism},
        date={2015},
   booktitle={New ideas in low dimensional topology},
      series={Ser. Knots Everything},
      volume={56},
   publisher={World Sci. Publ., Hackensack, NJ},
       pages={335\ndash 377},
         url={http://dx.doi.org/10.1142/9789814630627_0009},
      review={\MR{3381329}},
}

\bib{Kauffman-1999}{article}{
      author={Kauffman, Louis~H.},
       title={Virtual knot theory},
        date={1999},
        ISSN={0195-6698},
     journal={European J. Combin.},
      volume={20},
      number={7},
       pages={663\ndash 690},
         url={http://dx.doi.org/10.1006/eujc.1999.0314},
      review={\MR{1721925 (2000i:57011)}},
}

\bib{Kim-2000}{article}{
      author={Kim, Se-Goo},
       title={Virtual knot groups and their peripheral structure},
        date={2000},
        ISSN={0218-2165},
     journal={J. Knot Theory Ramifications},
      volume={9},
      number={6},
       pages={797\ndash 812},
         url={http://dx.doi.org/10.1142/S0218216500000451},
      review={\MR{1775387 (2001j:57010)}},
}

\bib{Kamada-Kamada-2000}{article}{
      author={Kamada, Naoko},
      author={Kamada, Seiichi},
       title={Abstract link diagrams and virtual knots},
        date={2000},
        ISSN={0218-2165},
     journal={J. Knot Theory Ramifications},
      volume={9},
      number={1},
       pages={93\ndash 106},
         url={http://dx.doi.org/10.1142/S0218216500000049},
      review={\MR{1749502 (2001h:57007)}},
}

\bib{Kauffman-Radford}{incollection}{
      author={Kauffman, Louis~H.},
      author={Radford, David},
       title={Bi-oriented quantum algebras, and a generalized {A}lexander
  polynomial for virtual links},
        date={2003},
   booktitle={Diagrammatic morphisms and applications ({S}an {F}rancisco, {CA},
  2000)},
      series={Contemp. Math.},
      volume={318},
   publisher={Amer. Math. Soc., Providence, RI},
       pages={113\ndash 140},
  url={https://doi-org.libaccess.lib.mcmaster.ca/10.1090/conm/318/05548},
      review={\MR{1973514}},
}

\bib{kuperberg}{article}{
      author={Kuperberg, G.},
       title={What is a virtual link?},
        date={2003},
     journal={Algebraic and Geometric Topology},
      volume={3},
       pages={587\ndash 591},
}

\bib{Manturov2002}{article}{
      author={Manturov, Vassily~O.},
       title={On invariants of virtual links},
        date={2002},
        ISSN={0167-8019},
     journal={Acta Appl. Math.},
      volume={72},
      number={3},
       pages={295\ndash 309},
         url={http://dx.doi.org/10.1023/A:1016258728022},
      review={\MR{1916950 (2004d:57010)}},
}

\bib{Mellor-2016}{article}{
      author={Mellor, Blake},
       title={Alexander and writhe polynomials for virtual knots},
        date={2016},
        ISSN={0218-2165},
     journal={J. Knot Theory Ramifications},
      volume={25},
      number={8},
       pages={1650050, 30},
  url={https://doi-org.libaccess.lib.mcmaster.ca/10.1142/S0218216516500504},
      review={\MR{3530309}},
}

\bib{vktsoa}{book}{
      author={Manturov, Vassily~O.},
      author={Ilyutko, Denis~P.},
       title={Virtual knots},
      series={Series on Knots and Everything},
   publisher={World Scientific Publishing Co. Pte. Ltd., Hackensack, NJ},
        date={2013},
      volume={51},
        ISBN={978-981-4401-12-8},
        note={The state of the art, Translated from the 2010 Russian original,
  With a preface by Louis H. Kauffman},
      review={\MR{2986036}},
}

\bib{milnor}{incollection}{
      author={Milnor, John},
       title={Isotopy of links. {A}lgebraic geometry and topology},
        date={1957},
   booktitle={A symposium in honor of {S}. {L}efschetz},
   publisher={Princeton University Press, Princeton, N. J.},
       pages={280\ndash 306},
      review={\MR{0092150}},
}

\bib{Magnus-Karrass-Solitar}{book}{
      author={Magnus, Wilhelm},
      author={Karrass, Abraham},
      author={Solitar, Donald},
       title={Combinatorial group theory},
     edition={second},
   publisher={Dover Publications, Inc., Mineola, NY},
        date={2004},
        ISBN={0-486-43830-9},
        note={Presentations of groups in terms of generators and relations},
      review={\MR{2109550}},
}

\bib{Polyak}{article}{
      author={Polyak, Michael},
       title={Minimal generating sets of {R}eidemeister moves},
        date={2010},
        ISSN={1663-487X},
     journal={Quantum Topol.},
      volume={1},
      number={4},
       pages={399\ndash 411},
         url={http://dx.doi.org/10.4171/QT/10},
      review={\MR{2733246}},
}

\bib{Rushworth-2019}{article}{
      author={Rushworth, William},
       title={Computations of the slice genus of virtual knots},
        date={2019},
        ISSN={0166-8641},
     journal={Topology Appl.},
      volume={253},
       pages={57\ndash 84},
         url={https://doi.org/10.1016/j.topol.2018.11.028},
      review={\MR{3892369}},
}

\bib{Satoh}{article}{
      author={Satoh, Shin},
       title={Virtual knot presentation of ribbon torus-knots},
        date={2000},
        ISSN={0218-2165},
     journal={J. Knot Theory Ramifications},
      volume={9},
      number={4},
       pages={531\ndash 542},
         url={http://dx.doi.org/10.1142/S0218216500000293},
      review={\MR{1758871 (2001c:57029)}},
}

\bib{saw}{article}{
      author={{Sawollek}, J\"{o}rg},
       title={{On Alexander-Conway Polynomials for Virtual Knots and Links}},
        date={1999-12},
     journal={arXiv:9912173[math.GT]},
      eprint={arXiv:math/9912173},
}

\bib{Smythe-1966}{incollection}{
      author={Smythe, Neville~F.},
       title={Boundary links},
        date={1966},
   booktitle={Topology {S}eminar, {W}isconsin 1965},
      editor={Bing, R.~H.},
      editor={Bean, R.~J.},
      series={Annals of Mathematics Studies},
      volume={60},
   publisher={Princeton University Press},
       pages={69\ndash 72},
}

\bib{stallings_central}{article}{
      author={Stallings, John},
       title={Homology and central series of groups},
        date={1965},
        ISSN={0021-8693},
     journal={J. Algebra},
      volume={2},
       pages={170\ndash 181},
         url={https://doi.org/10.1016/0021-8693(65)90017-7},
      review={\MR{0175956}},
}

\bib{Silver-Williams-2001}{article}{
      author={Silver, Daniel~S.},
      author={Williams, Susan~G.},
       title={Alexander groups and virtual links},
        date={2001},
        ISSN={0218-2165},
     journal={J. Knot Theory Ramifications},
      volume={10},
      number={1},
       pages={151\ndash 160},
         url={http://dx.doi.org/10.1142/S0218216501000792},
      review={\MR{1822148 (2002b:57014)}},
}

\bib{Silver-Williams-2003}{article}{
      author={Silver, Daniel~S.},
      author={Williams, Susan~G.},
       title={Polynomial invariants of virtual links},
        date={2003},
        ISSN={0218-2165},
     journal={J. Knot Theory Ramifications},
      volume={12},
      number={7},
       pages={987\ndash 1000},
         url={http://dx.doi.org/10.1142/S0218216503002901},
      review={\MR{2017967 (2004i:57015)}},
}

\bib{Silver-Williams-2006}{article}{
      author={Silver, Daniel~S.},
      author={Williams, Susan~G.},
       title={Crowell's derived group and twisted polynomials},
        date={2006},
        ISSN={0218-2165},
     journal={J. Knot Theory Ramifications},
      volume={15},
      number={8},
       pages={1079\ndash 1094},
         url={https://doi.org/10.1142/S0218216506004956},
      review={\MR{2275098}},
}

\bib{Turaev-2008}{article}{
      author={Turaev, Vladimir},
       title={Cobordism of knots on surfaces},
        date={2008},
        ISSN={1753-8416},
     journal={J. Topol.},
      volume={1},
      number={2},
       pages={285\ndash 305},
         url={http://dx.doi.org/10.1112/jtopol/jtn002},
      review={\MR{2399131 (2009f:57022)}},
}

\bib{Turaev-1976}{article}{
      author={Turaev, Vladimir~G.},
       title={The {M}ilnor invariants and {M}assey products},
        date={1976},
     journal={Zap. Nau\v{c}n. Sem. Leningrad. Otdel. Mat. Inst. Steklov.
  (LOMI)},
      volume={66},
       pages={189\ndash 203, 209\ndash 210},
        note={Studies in topology, II},
      review={\MR{0451251}},
}

\end{biblist}
\end{bibdiv}

\end{document}